\theoremstyle{plain}
\newtheorem{thm}[equation]{Theorem}
\newtheorem{lem}[equation]{Lemma}
\newtheorem{prop}[equation]{Proposition}
\theoremstyle{definition}
\newtheorem{defn}[equation]{Definition}
\newtheorem{example}[equation]{Example}
\newtheorem{rem}[equation]{Remark}
\newcommand{\D}{\mathcal{D}}
\newcommand{\id}{\operatorname{id}}
\newcommand{\inv}{{-1}}
\newcommand{\set}[1]{\{#1\}} 
\newcommand{\setm}[2]{\{\,#1\mid#2\,\}}
\newcommand{\Set}{\mathbf{Set}}
\newcommand{\mcd}{\mathcal{D}}
\newcommand{\N}{\mathbb{N}}
\newcommand{\ci}{\subset}
\newcommand{\Hom}{\operatorname{Hom}}
\newcommand{\End}{\mathcal{E}nd}
\newcommand{\obj}{\operatorname{Ob}}
\newcommand{\ob}{\obj}
\newcommand{\Prop}{\ensuremath{\mathbf{Prop}}}
\newcommand{\Perm}{\ensuremath{\mathbf{Perm}}}
\newcommand{\Cat}{\ensuremath{\mathbf{Cat}}}
\newcommand{\Operad}{\ensuremath{\mathbf{Operad}}}
\newcommand{\Mega}{\ensuremath{\mathbf{Mega}}}
\renewcommand{\C}{\mathcal{C}}			
\newcommand{\T}{\mathcal{T}}				
\newcommand{\Q}{\T}						
\newcommand{\SSS}{\mathcal{S}}			
\newcommand{\R}{\mathcal{R}}			
\newcommand{\inthom}{\mathcal{H}om}		
\newcommand{\X}{\mathcal{X}}
\newcommand{\Y}{\mathcal{Y}}
\newcommand{\bilinue}{\operatorname{Bilin}}
\newcommand{\bilin}{\mathcal{B}ilin}
\newcommand{\col}{\operatorname{Col}}
\newcommand{\someoperad}{\mathcal{O}}
\newcommand{\lis}[1]{\left< #1 \right>}   
\newcommand{\M}{\mathbb{M}}
\newcommand{\blis}{\lis}	
\newcommand{\tin}{\operatorname{in}}
\newcommand{\tout}{\operatorname{out}}
\def\Jac#1#2#3{{
\unitlength=.4pt
\begin{picture}(48.00,30.00)(-4,0.00)
\bezier{34}(20.00,20.00)(30.00,10.00)(40.00,0.00)
\bezier{34}(20.00,20.00)(10.00,10.00)(0.00,0.00)
\bezier{20}(10.00,10.00)(15.00,5.00)(20.00,0.00)
\put(20.00,30.00){\line(0,-1){10.00}}
\put(0,-5){\makebox(0,0)[t]{\scriptsize $#1$}}
\put(20,-5){\makebox(0,0)[t]{\scriptsize $#2$}}
\put(40,-5){\makebox(0,0)[t]{\scriptsize $#3$}}
\end{picture}}}
\def\coJac#1#2#3{{
\unitlength=.4pt
\begin{picture}(48.00,30.00)(-4,-30.00)
\bezier{30}(20.00,-20.00)(30.00,-10.00)(40.00,0.00)
\bezier{30}(20.00,-20.00)(10.00,-10.00)(0.00,0.00)
\bezier{20}(10.00,-10.00)(15.00,-5.00)(20.00,0.00)
\put(20.00,-30.00){\line(0,1){10.00}}
\put(0,5){\makebox(0,0)[b]{\scriptsize $#1$}}
\put(20,5){\makebox(0,0)[b]{\scriptsize $#2$}}
\put(40,5){\makebox(0,0)[b]{\scriptsize $#3$}}
\end{picture}}}
\def\dvojiteypsilonvetsi#1#2#3#4{{
\unitlength=.46pt
\begin{picture}(24.00,30.00)(0.00,3.00)
\put(10.00,20.00){\line(0,-1){10.00}}
\bezier{20}(10.00,10.00)(15,5)(20.00,0.00)
\bezier{20}(10.00,10.00)(5,5)(0.00,0.00)
\bezier{20}(10.00,20.00)(15,25)(20.00,30.00)
\bezier{20}(0.00,30.00)(5,25)(10.00,20.00)
\put(0,-5){\makebox(0,0)[t]{\scriptsize $#1$}}
\put(20,-5){\makebox(0,0)[t]{\scriptsize $#2$}}
\put(0,35){\makebox(0,0)[b]{\scriptsize $#3$}}
\put(20,35){\makebox(0,0)[b]{\scriptsize $#4$}}
\end{picture}}}
\def\pravaplastevvetsi#1#2#3#4{
\unitlength=.45pt
\begin{picture}(38,30)(-4,4)
\put(0.00,0.00){\line(0,1){10}}
\put(20.00,0.00){\line(0,1){10}}
\put(10.00,20.00){\line(0,1){10}}
\put(30.00,20.00){\line(0,1){10}}
\put(0,10){\bezier{20}(0.00,0.00)(5.00,5.00)(10.00,10.00)}
\put(20,10){\bezier{20}(0.00,0.00)(5.00,5.00)(10.00,10.00)}
\put(20,10){\bezier{20}(0.00,0.00)(-5.00,5.00)(-10.00,10.00)}
\put(0,-5){\makebox(0,0)[t]{\scriptsize $#1$}}
\put(20,-5){\makebox(0,0)[t]{\scriptsize $#2$}}
\put(10,35){\makebox(0,0)[b]{\scriptsize $#3$}}
\put(30,35){\makebox(0,0)[b]{\scriptsize $#4$}}
\end{picture}
}
\def\levaplastevvetsi#1#2#3#4{
\unitlength=.45pt
\begin{picture}(38,30)(-34,4)
\put(0.00,0.00){\line(0,1){10}}
\put(-20.00,0.00){\line(0,1){10}}
\put(-10.00,20.00){\line(0,1){10}}
\put(-30.00,20.00){\line(0,1){10}}
\put(0,10){\bezier{20}(0.00,0.00)(-5.00,5.00)(-10.00,10.00)}
\put(-20,10){\bezier{20}(0.00,0.00)(-5.00,5.00)(-10.00,10.00)}
\put(-20,10){\bezier{20}(0.00,0.00)(5.00,5.00)(10.00,10.00)}
\put(0,-5){\makebox(0,0)[t]{\scriptsize $#2$}}
\put(-20,-5){\makebox(0,0)[t]{\scriptsize $#1$}}
\put(-10,35){\makebox(0,0)[b]{\scriptsize $#4$}}
\put(-30,35){\makebox(0,0)[b]{\scriptsize $#3$}}
\end{picture}
}
\title{On the category of props}
\author[P. Hackney]{Philip Hackney}
\address{Department of Mathematics, University of California-Riverside}
\email{hackney@math.ucr.edu}
\author[M. Robertson]{Marcy Robertson}
\address{Department of Mathematics, University of Western Ontario, Canada}
\email{mrober97@uwo.ca}
\keywords{colored operad, colored prop, multicategory}
\begin{document}
\begin{abstract}The category of (colored) props is an enhancement of the
category of colored operads, and thus of the category of small
categories. The titular category has nice formal properties: it is
bicomplete and is a symmetric monoidal category, with monoidal product
closely related to the Boardman-Vogt tensor product of operads. Tools
developed in this article, which is the first part of a larger work, include a
generalized version of multilinearity of functors, a free prop
construction defined on certain ``generalized'' graphs, and the
relationship between the category of props and the categories of
permutative categories and of operads.\end{abstract} 
\maketitle

\section{Introduction}

This paper lays the foundation for a multistage project developing the
notion of  ``higher prop.'' Here we establish the formal properties
necessary to do homotopy theory: the category of props (enriched over a
suitable symmetric monoidal category) is complete, cocomplete
(Theorem~\ref{T:cocomplete}), and closed symmetric monoidal
(Theorem~\ref{T:csm}). These are the basic properties required for the
sequel \cite{hackneyrobertson2}, where we construct a cofibrantly
generated model structure on the category of props enriched in
simplicial sets. Later papers will develop combinatorial models for
up-to-homotopy props, following the dendroidal approach \cite{cm-ho, mw,
mw2} as well as comparisons between these models (as in \cite{cm-ds,
cm-simpop} in the dendroidal setting). For motivation for the project as
a whole, see the introduction to \cite{hackneyrobertson2}.

Operads are a tool used to model (co)algebraic structures, i.e
associative, associative and commutative, co-associative co-commutative,
Lie, Poisson, etc. They were first introduced in the 70's in algebraic
topology \cite{bv, geometry}, experienced a renaissance in the 90's
\cite{ginzburgkapranov}, and now are ubiquitous throughout various areas
of mathematics (see~\cite{mss} for a survey). Operads can model
structures which have operations with multiple inputs and a single
output. A basic example is the operad $\mathbf{Ass}$, whose algebras are
the monoids. Coalgebras over $\mathbf{Ass}$ are precisely comonoids, so
we see that operads can also be used to model structures with
co\"operations \cite[3.71]{mss}. 

Operads, however, cannot model all algebraic structures of interest. For
example, it is well known that there is no operad which models groups. A
shadow of this fact occurs when we work over $k$-modules: algebras over
$\mathbf{Ass}$ are $k$-algebras, coalgebras over $\mathbf{Ass}$ are
$k$-coalgebras, but there is no operad which models Hopf algebras, which
possess both multiplication and comultiplication. In order to study
families of algebras of this type, one must pass to the strictly richer
category of props, which was introduced by MacLane \cite{catalg} long
before the invention of operads. Simply put, a prop can control mixed
algebraic and coalgebraic structures, like Hopf algebras. Another
important example are the various cobordism categories, which may be
minimally described by a prop. Thus certain varieties of field theories
are algebras over a prop whose morphisms are cobordisms. 

Both of these examples are monochrome props, so why might one consider
props with more general color sets? Let us justify this with an example,
a subcategory, and an area of application. The example is the existence
of a $2$-colored prop, whose algebras consist of two Hopf algebras
together with a morphism from the first to the second. More generally,
given a prop T there is a prop S so that algebras over S are maps of
T-algebras, although this comes at the cost of a doubling of colors.
Secondly, we allow general color sets because we may then consider props
as generalized categories. The category of small categories embeds in
the category of props (see section \ref{S:adjcat}). Finally, props are
an ideal way to study certain problems in computer science. We could
consider a prop whose morphisms are functions with multiple inputs and
outputs, e.g. a function which takes a full customer record and returns
the customer's phone number and name. The colors of this prop are the
various data types of the language ({\tt int}, {\tt float}, {\tt
string}, etc.) and user-defined types. Propic composition can describe
piping inputs of some functions into outputs of others, and can also
describe parallel execution of functions. We will discuss 
several other examples in detail in section~\ref{S:examples}.

\subsection{Acknowledgments} We would like to thank the referee for pointing out an oversight in a previous version of our free prop construction.

\subsection{Definition of prop} Intuitively, a prop is a generalization
of a category. We still have sets of objects, but arrows $x\rightarrow
y$ are replaced by multilinear operations which may have $n$-inputs and
$m$-outputs, i.e. maps are multilinear maps $x_{1}\otimes \dots \otimes
x_{n}\rightarrow y_{1}\otimes \dots \otimes y_m$. In the monochrome
case, a prop may be defined as a symmetric monoidal category freely
generated by a single object. The intuition is that this is essentially
a generalization of Lawvere theories that work in non-Cartesian
contexts. 
More explicitly, a \emph{prop} $\T$ consists of the following
data:

\begin{itemize}
\item A \emph{set} of colors\footnote{aka `objects'} $C = \col(\T)$,
\item for every (ordered) list of colors $a_1, \dots, a_n, b_1, \dots, b_k \in C$ (where $n,m\geq 0$), a set of operations 
	\[
	\T(a_1, \dots, a_n; b_1, \dots, b_m) = \T(\lis{a_i}_{i=1}^n; \lis{b_k}_{k=1}^m),\] \label{enumeratedhomsets}
\item a specified element $\id_c \in \T(c; c)$ for each $c\in C$,
\label{enumeratedid}
\item an associative \emph{vertical composition} \begin{align*}
\T(\lis{a_i}_{i=1}^n; \lis{b_k}_{k=1}^m) \times \T(\lis{c_j}_{j=1}^p;
\lis{a_i}_{i=1}^n) & \to \T(\lis{c_j}_{j=1}^p ; \lis{b_k}_{k=1}^m) \\
(f,g) &\mapsto f\circ_v g,\end{align*}
\item an associative \emph{horizontal composition} \begin{align*} \T(
\lis{a_i}_{i=1}^n; \lis{b_k}_{k=1}^m)\times \T(\lis{a_i}_{i=n+1}^{n+p};
\lis{b_k}_{k=m+1}^{m+q}) &\to \T( \lis{a_i}_{i=1}^{n+p}
;\lis{b_k}_{k=1}^{m+q} )\\ (f,g) &\mapsto f\circ_h g, \end{align*}
\label{enumeratedhoriz}
\item a map $\sigma^*: \T(\lis{a_i}_{i=1}^n; \lis{b_k}_{k=1}^m) \to
\T(\lis{a_{\sigma(i)}}_{i=1}^n; \lis{b_k}_{k=1}^m) $ for every element
$\sigma\in \Sigma_n$, and
\item a map $\tau_*: \T(\lis{a_i}_{i=1}^n; \lis{b_k}_{k=1}^m)
\to\T(\lis{a_i}_{i=1}^n; \lis{b_{\tau^\inv(k)}}_{k=1}^m)$ for every
$\tau \in \Sigma_m$. \end{itemize}
We typically utilize the notation $\lis{a_i}_{i=1}^n$ or $\blis{a_1, \dots, a_n}$ for a (possibly empty) list of elements $a_1, \dots, a_n$, 
omitting the brackets where appropriate (e.g. when $n=1$).
We will frequently denote elements of the set $\T(\lis{a_i}_{i=1}^n; \lis{b_k}_{k=1}^m)$ by
\[ f:\lis{a_i}_{i=1}^n\rightarrow \lis{b_k}_{k=1}^m.\]
The above data are required to satisfy the following axioms:
\begin{itemize}
\item The elements $\id_c$ are identities for the \emph{vertical} composition, i.e.
\begin{equation} \label{E:identities}
\begin{aligned}
f\circ_v (\id_{a_1} \circ_h \dots \circ_h \id_{a_n}) &=f \\
(\id_{b_1} \circ_h \dots \circ_h \id_{b_m}) \circ_v f &=f.
\end{aligned} \end{equation}
\item The horizontal and vertical compositions satisfy an interchange
rule \begin{equation} \label{E:hvcompat} (f\circ_v g) \circ_h (f'
\circ_v g') = (f\circ_h f') \circ_v (g\circ_h g') \end{equation}
\emph{whenever the vertical compositions on the left are well-defined}.
\item  The vertical composition is compatible with the symmetric group
actions in the sense that
\begin{equation} \label{E:vertcompat}
\begin{aligned} f\circ_v (\sigma_* g) &= (\sigma^* f) \circ_v g \\
\sigma^*(f\circ_v g) &= f \circ_v (\sigma^* g) \\
\tau_*(f\circ_v g) &= (\tau_* f)\circ_v g, 
\end{aligned}
\end{equation}
where $\tau$ and $\sigma$ are permutations on the appropriate number of letters.
\item Suppose that $f$ has $n$ inputs and $m$ outputs and $g$ has $p$
inputs and $q$ outputs. If $\sigma \in \Sigma_n$, $\bar\sigma \in
\Sigma_p$, $\tau\in \Sigma_m$, and $\bar\tau \in \Sigma_q$ and we write 
$\sigma \times \bar\sigma \in \Sigma_n \times \Sigma_p \hookrightarrow
\Sigma_{n+p}$ then the horizontal composition satisfies
\begin{equation} \label{E:horizcompat}
\begin{aligned}
(\sigma^* f) \circ_h (\bar\sigma^* g) &= (\sigma \times \bar{\sigma})^*(f\circ_h g) \\
(\tau_* f) \circ_h (\bar\tau_* g) &= (\tau \times \bar{\tau})_*(f\circ_h g). \\
\end{aligned}
\end{equation}
Furthermore, if $\sigma_{xy} \in \Sigma_{x+y}$ is the permutation whose
restrictions are increasing bijections
\begin{align*} \sigma_{xy}: [1,y] &\overset\cong\longrightarrow [x+1, x+y] \\
\sigma_{xy}: [1+y, x+y] &\overset\cong\longrightarrow [1,x]
\end{align*}
then 
\begin{equation}
(\sigma_{p,n})^*(\sigma_{m,q})_* (f\circ_h g) = g\circ_h f. \label{E:horizswap}
\end{equation}
\item The maps $\sigma^*$ and $\tau_*$ satisfy the interchange rule
$\sigma^* \tau_* = \tau_* \sigma^*$ and are \emph{actions}: \[ \sigma^*
\bar\sigma^* = (\bar\sigma \sigma)^* \qquad \qquad \tau_* \bar\tau_* =
(\tau \bar\tau)_*. \]
\end{itemize}

\begin{rem}  There are several variations on the term ``prop'' in the
literature.  Boardman and Vogt use the name `colored PROP' for a prop
which is completely determined by operations with $n$-inputs and only
one output \cite[Definition 2.44]{bv}. With this definition colored
PROPs are the same thing as colored operads or multicategories (see 
\cite[2.3.1]{leinster}). The definition we have given here is in line
with the original due to MacLane, see, for instance, \cite{fmy, fresse,
jy, maclane, operadsandprops}.\end{rem}

\begin{defn} A homomorphism of props $f:\R\rightarrow\T$ consists of a  map $\col(\R)\rightarrow\col(\T)$ and for each input-output profile $a_1,\dots ,a_n;b_1,\dots ,b_m$ in $\col(\R)$ a map $\R(\lis{a_{i}}_{i=1}^{n};\lis{b_j}_{j=1}^m)\longrightarrow\T(\lis{fa_{i}}_{i=1}^{n};\lis{fb_j}_{j=1}^m)$ which commutes with all composition, identity, and symmetry operations. The category of props and prop homomorphisms is denoted $\Prop$. \end{defn} 

It is straightforward to generalize the definitions from this section to a \emph{prop
enriched in a symmetric monoidal category} $(\mathcal{E}, \boxtimes,
I)$. In the data, one replaces the sets of operations
$\T(\lis{a_i}_{i=1}^n; \lis{b_k}_{k=1}^m)$ with objects of
$\mathcal{E}$, the specified elements $\id_c$ by maps $I\to \T(c;c)$,
and all products $\times$ by $\boxtimes$. In the axioms, all equalities
on elements should be expressed instead by requiring that the relevant
diagrams commute. 

\subsection{Examples of props}\label{S:examples}

\begin{example} The Segal prop (see \cite{Seg}) is a prop of infinite dimensional
complex orbifolds. The space of morphisms is defined as the moduli space
$P_{m,n}$ of complex Riemann surfaces bounding $m+ n$ labeled
nonoverlapping holomorphic holes. The surfaces should be understood as
compact smooth complex curves, not necessarily connected, along with $m
+ n$ biholomorphic maps of the closed unit disk to the surface. The
precise nonoverlapping condition is that the closed disks in the inputs
(outputs) do not intersect pairwise and an input disk may intersect an
output disk only along the boundary. This technicality brings in the
symmetric group morphisms, including the identity, to the prop, but does
not create singular Riemann surfaces by composition. The moduli space
means that we consider isomorphism classes of such objects. The
composition of morphisms in this prop is given by sewing the Riemann
surfaces along the boundaries, using the equation $zw = 1$ in the
holomorphic parameters coming from the standard one on the unit disk.
The tensor product of morphisms is the disjoint union. This prop plays a
crucial role in conformal field theory.  

\end{example} 

\begin{example} Suppose that $C$ is a set and we have a family
$\mathbf{X} = \set{X_c}_{c\in C}$ of objects in some symmetric monoidal
category $(\mathcal{E}, \boxtimes, I)$. This data determines an
prop\footnote{We can think of this as an $\mathcal{E}$-prop if
$\mathcal{E}$ is \emph{closed} symmetric monoidal.} $\End_{\mathbf{X}}$
with color set $C$, called the \emph{endomorphism prop of $\mathbf{X}$}.
It is defined by \[ \End_{\mathbf{X}} (c_1, \dots, c_n; d_1, \dots, d_m)
= \mathcal{E}(X_{c_1} \boxtimes \dots \boxtimes X_{c_n}, X_{d_1}
\boxtimes \dots \boxtimes X_{d_m}),  \] together with the
$\Sigma$-actions and compositions coming from the monoidal structure. As
in the case of operad algebras, if $\T$ is a prop, then a
\emph{$\T$-algebra} is a prop map $\T \to \End_{\mathbf{X}}$.
\end{example} 

\begin{example}~\cite[2.1.4]{stringtop}A more geometric example, due to Sullivan, is the Lie
bialgebra prop. A Lie \emph{bialgebra} is a Lie algebra $\frak{g}$ with
the structure of a Lie coalgebra given by a one-cocycle $\delta
:\frak{g}\rightarrow\frak{g}\wedge\frak{ g}$ on $\frak{g}$ with values
in the $\frak{g}$-module $\frak{g}\wedge\frak{g}$, i.e., the linear map
$\delta(g)$ satisfies the cocycle condition: $$\delta([g_1,g_2]) =
g_1\delta(g_2) \minus g_2\delta(g_1)$$ for all $g_1, g_2\in\frak{ g}$. Lie
bialgebras are so-called quasi-classical limits of quantum groups (more
precisely, quantum universal enveloping algebras) and they play a key
role in deformation theory (see, for example \cite{fmy,pirashvili}). One
can construct a monochrome prop  which has $(n,m)$-ary operations
defined as quotient spaces of vector spaces spanned by graphs of a
certain type. Explicitly, when $m$ or $n$ is $0$, we define $L(n,m) :=
0$. For $m, n \ge 1$, the space $L(n,m)$ may be defined as follows. 

Consider the vector space spanned freely by the (isomorphism classes of)
directed oriented trivalent graphs $\Gamma$ with $n+m$ legs labeled as
inputs $1, \dots ,n$ and outputs $1, \dots , m$. The graphs need not be
connected, but must be finite. A leg is either an edge whose one end is
free, that is, not a vertex, while the other end is a vertex, or a
half-edge of an edge with two free ends. The adjective directed refers
to the choice of directions on each edge, so that the legs are directed
from the inputs and toward the outputs and the directions define a
partial order on the set of vertices. Trivalent here means that all
vertices must have one incoming and two outgoing edges or two incoming
and one outgoing edges. Graphs with no vertices, i.e., disjoint unions
of edges each of which connects an input with an output, are allowed. An
orientation on a graph means the choice of an ordering on the set of
edges, up to the sign of a permutation. We define $L(n,m)$ to be the
quotient of this space of graphs by relations generated by 

\vglue 3pt
\[
\Jac 123 + \Jac 231 + \Jac 312 \hskip 1mm , \
\coJac 123 +  \coJac 231 + \coJac 312 \hskip 1mm   \mbox { and }
\hskip 2mm 
\dvojiteypsilonvetsi1212 - 
\levaplastevvetsi1212  - 
\pravaplastevvetsi1212 +
\levaplastevvetsi1221 + \pravaplastevvetsi1221 \hskip 1mm, 
\]
with labels indicating, in the obvious way, the corresponding
permutations of the inputs and outputs.\end{example} 

\subsection{Relationship with colored operads}
Recall that a colored \emph{operad}\footnote{which is variously called 
`symmetric multicategory' or simply `operad'} $\someoperad$ is a
structure with a color set $\col \someoperad$ and hom sets
$\someoperad(c_1, \dots, c_n; c)$ for each list of colors $c_1, \dots,
c_n,c$, together with appropriate composition operations. The precise
definition is a bit more involved than that of prop (see
\cite{bmresolution}) since the operadic composition mixes together
horizontal and vertical propic compositions. However, we can regard
colored operads as a special type of prop, namely those which are
completely determined by the `one-output part'. Let $\Operad$ be the
category of colored operads, which we shall now refer to simply as
\emph{operads} (see \cite{moerdijklecture}). There is a forgetful
functor \[ U: \Prop \to \Operad \] which takes $\T$ to an operad $U(\T)$
with $\col \T = \col U(\T)$. The morphism sets are defined by \[ U(\T)
(a_1, \dots, a_n; b) = \T(a_1, \dots, a_n; b). \]  Operadic composition
\[ \gamma: U(\T)(\lis{a_i}_{i=1}^n; b) \times \prod_{i=1}^n U(\T)
(\lis{c_{i,j}}_{j=1}^{p_i}; a_i) \to U(\T)\left(\lis{
\lis{c_{i,j}}_{j=1}^{p_i}}_{i=1}^n; b\right) \] is then given by  \[
\gamma(g, \lis{f_i}_{i=1}^n) = g\circ_v (f_1 \circ_h \dots \circ_h f_n).
\]

\begin{prop}\label{P:operadpropadjunction}
The forgetful functor $U: \Prop \longrightarrow\Operad$ has a left
adjoint $F: \Operad \longrightarrow \Prop$ with $\col F(\someoperad) =
\col \someoperad$. An element of $F\someoperad(a_1, \dots, a_n ; b_1,
\dots b_m)$ is given by $(\theta, \lis{f_j}_{j=1}^m)$ where
$\theta\colon \set{1, \dots, n} \to \set{1, \dots, m}$ is a function and
$ f_j: \lis{ a_{i} }_{\theta(i)=j} \to b_j $ is in $\someoperad$.
\end{prop}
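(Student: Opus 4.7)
The plan is to construct $F\someoperad$ explicitly as a prop with $\col F\someoperad = \col \someoperad$ and hom sets exactly as described in the statement, and then to exhibit a unit $\eta\colon \someoperad \to UF\someoperad$ satisfying the adjoint universal property. The description of morphisms is motivated by the expectation that every operation in the free prop on an operad ought to decompose as a horizontal composition of one-output operations preceded by a permutation of the inputs. The function $\theta\colon \{1,\dots,n\} \to \{1,\dots,m\}$ records which input feeds into which output-block, and $f_k$ is the single-output operation in $\someoperad$ attached to output $k$.

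The prop structure on $F\someoperad$ is then largely forced by this description. The identity at $c$ is $(c_1, \id_c)$, where $c_1\colon \{1\}\to\{1\}$ is the identity function. Horizontal composition is juxtaposition: $(\theta, \lis{f_k}_{k=1}^m) \circ_h (\theta', \lis{f'_l}_{l=1}^q)$ has underlying function the disjoint union $\theta \sqcup \theta'$ (with codomains shifted) and list $\lis{f_1, \dots, f_m, f'_1, \dots, f'_q}$. Vertical composition of $(\theta, \lis{f_k}_{k=1}^n)\colon \lis{c_i} \to \lis{a_k}$ with $(\psi, \lis{g_l}_{l=1}^r)\colon \lis{a_k} \to \lis{b_l}$ has underlying function $\psi\circ \theta$ and operations $h_l = \gamma(g_l; \lis{f_k}_{\psi(k)=l})$ obtained from the operad composition in $\someoperad$. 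For $\sigma \in \Sigma_n$, the element $\sigma^*(\theta, \lis{f_k})$ has function $\theta\circ \sigma$ and each $f_k$ is reindexed by the order-preserving bijection between the old and new $k$-th fibers; analogously $\tau_*(\theta, \lis{f_k}) = (\tau\circ \theta, \lis{f_{\tau^\inv(l)}}_{l=1}^m)$. Functoriality in $\someoperad$ is immediate by applying any operad morphism $\alpha$ to each $f_k$.

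Verifying the prop axioms for $F\someoperad$ then reduces to bookkeeping: vertical associativity comes directly from associativity of $\gamma$; the equivariance identities \eqref{E:vertcompat}--\eqref{E:horizswap} reduce to equivariance of $\gamma$ combined with the combinatorics of $\theta$; and the interchange rule \eqref{E:hvcompat} expresses the compatibility of operadic substitution with juxtaposition. For the adjunction, the unit $\eta_\someoperad$ sends $f\colon \lis{a_i}_{i=1}^n \to b$ to $(c_n, f)$, where $c_n$ is the unique constant function $\{1,\dots,n\} \to \{1\}$; this is evidently a natural isomorphism of operads. Given a prop $\T$ and an operad map $\phi\colon \someoperad \to U\T$, I define the extension $\tilde\phi\colon F\someoperad \to \T$ by
\[ \tilde\phi(\theta, \lis{f_k}) = \sigma_\theta^*\bigl(\phi(f_1) \circ_h \cdots \circ_h \phi(f_m)\bigr), \]
where $\sigma_\theta \in \Sigma_n$ is the permutation stably sorting $(\theta(1), \dots, \theta(n))$ into nondecreasing order. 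Uniqueness of the extension is immediate because every element of $F\someoperad$ is generated from the image of $\eta$ under horizontal composition and $\Sigma$-actions; compatibility with all the prop structure then follows from the prop axioms in $\T$.

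The main obstacle is the verification of the interchange rule \eqref{E:hvcompat} in $F\someoperad$ and the corresponding check that $\tilde\phi$ respects vertical composition. Both rest on the same observation: when restricted to one-output operations, the operadic composition $\gamma$ in $U\T$ is by definition the propic combination $g\circ_v (f_1 \circ_h \cdots \circ_h f_n)$, so the interchange rule in $\T$ unpacks into precisely the distributivity needed for $h_l = \gamma(g_l; \lis{f_k}_{\psi(k)=l})$ to define a vertical composition compatible with horizontal juxtaposition. Once this point is absorbed, the remaining checks are rewritings of the same data.
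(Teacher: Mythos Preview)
Your proposal is correct and takes essentially the same approach as the paper: both construct $F\someoperad$ explicitly with the same formulas for horizontal and vertical composition and for the $\Sigma$-actions, and then check the adjunction by extending an operad map along the one-output generators. You supply somewhat more detail than the paper on the unit and on the explicit extension $\tilde\phi$ to $\T$, but the argument is the same.
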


\begin{proof} To show that we have a prop we need to first define the
vertical and horizonal composition relations.  Vertical composition of 
$(\theta, \lis{f_j})$ and $(\phi, \lis{g_k})$ is defined to be
\begin{gather*} \psi = (\phi \circ \theta , \lis{h_k}) \\h_k =
\gamma(g_k, \left< f_j \right>_{\phi(j) = k})\end{gather*} where
$\gamma$ is the operadic composition. Consider $(\theta_1,
\lis{f_{1j}}), (\theta_2, \lis{f_{2j}})$, where \begin{align*} \theta_1:
\set{1, \dots, n_1} &\to \set{1, \dots, m_1} \\ \theta_2: \set{1, \dots,
n_2} &\to \set{1, \dots, m_2} \end{align*} and $f_{\ell j}: \lis{a_{\ell
i}}_{\theta_\ell(i) = j} \to b_{\ell j}$. The horizontal composition of
these is $(\theta, f_k)$, where \begin{align*} \theta: \set{ 1,\dots,
n_1 + n_2} &\to \set{1, \dots, m_1 + m_2} \\ i &\mapsto \begin{cases}
\theta_1(i) & i\leq n_1 \\ \theta_2(i-n_1) + m_1 & i > n_1
\end{cases}\end{align*} and \[ f_k = \begin{cases} f_{1k} & 1\leq k \leq
m_1 \\ f_{2(k-m_1)} & m_1+1 \leq k \leq m_1 + m_2.\end{cases} \]

If $\tau$ is an element of $\Sigma_m$,  we define the left action \[
\tau_*: F(\someoperad)(\lis{a_i}_{i=1}^n; \lis{b_k}_{k=1}^m) \to
F(\someoperad)(\lis{a_i}_{i=1}^n; \lis{b_{\tau^\inv(k)}}_{k=1}^m) \] by
$(\theta, \lis{f_j}_{j=1}^m) \mapsto (\tau \circ \theta,
\lis{f_{\tau^\inv (j)}}_{j=1}^m)$. If $\sigma$ is an element of
$\Sigma_n$ then we define the right action \[ \sigma^*:
\T(\lis{a_i}_{i=1}^n; \lis{b_k}_{k=1}^m) \to
\T(\lis{a_{\sigma(i)}}_{i=1}^n; \lis{b_k}_{k=1}^m) \]  by $(\theta,
\lis{f_j}_{j=1}^m) \mapsto (\theta \circ \sigma,
\lis{\gamma_j^*f_j}_{j=1}^m)$. Here, $\gamma_j$ is the composition \[
\gamma_j: \theta^\inv(j) \to \sigma^\inv
\theta^\inv(j)\overset\sigma\to\theta^\inv(j)\] where the first map is
the order preserving bijection. It is now left as an exercise to verify
that the axioms of a prop are satisfied. 

Let $*: \set{1, \dots, n} \to \set{1}$ denote the unique map.  We see
that a map of operads $q: \someoperad \to U(\T)$ uniquely determines a
map of props $q': F(\someoperad) \to \T$ so that $(U(q'))(*, f) =  q(f)$.
Thus, $F$ is left adjoint to $U$. 
\end{proof}

Since there is only one map $\theta: \set{1, \dots, n} \to \set{1}$, we have
\begin{prop}\label{P:UFequalsID}
If $\someoperad$ is an operad then 
\[ F(\someoperad) (a_1, \dots, a_n; b) \cong \someoperad(a_1, \dots, a_n; b). \]
Consequently, $UF \cong \id_{\Operad}$. \qed
\end{prop}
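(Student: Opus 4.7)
My plan is to observe that the proposition is essentially immediate from the explicit description of $F(\someoperad)$ given in Proposition~\ref{P:operadpropadjunction}, and then to upgrade the resulting bijection to an isomorphism of operads.

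For the first statement, I would unpack the definition. An element of $F(\someoperad)(a_1, \dots, a_n; b)$ is a pair $(\theta, \lis{f_j}_{j=1}^{1})$ where $\theta \colon \set{1, \dots, n} \to \set{1}$ and $f_1 \colon \lis{a_i}_{\theta(i) = 1} \to b$ lies in $\someoperad$. Since $\set{1}$ is a singleton, there is only one such $\theta$, namely the unique map $*$, and the constraint $\theta(i) = 1$ is automatic so that $f_1 \colon \lis{a_i}_{i=1}^n \to b$. Thus the assignment $(\ast, f_1) \mapsto f_1$ is a bijection
\[ F(\someoperad)(a_1, \dots, a_n; b) \xrightarrow{\;\cong\;} \someoperad(a_1, \dots, a_n; b). \]

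Next I would verify that this bijection assembles into an isomorphism of operads $UF(\someoperad) \cong \someoperad$, natural in $\someoperad$. The color sets agree by construction. The identities on both sides are sent to the elements $\id_c \in \someoperad(c;c)$. For the symmetric group action, the formula $\sigma^*(\theta, f_1) = (\theta \circ \sigma, \gamma_1^* f_1)$ from the previous proof reduces (when $m=1$) to the given $\Sigma_n$-action on $\someoperad(a_1, \dots, a_n; b)$. For operadic composition, I would compute $\gamma_{U(F\someoperad)}(g, \lis{f_i}_{i=1}^n) = g \circ_v (f_1 \circ_h \dots \circ_h f_n)$ by first applying the horizontal composition formula iteratively to obtain an element of $F(\someoperad)$ whose underlying function is the identity $\set{1,\dots, n} \to \set{1, \dots, n}$ and whose components are the $f_i$; then vertical composition with $g = (\ast, g)$ yields the pair $(\ast, \gamma_\someoperad(g, \lis{f_i}))$. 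Under the bijection this is exactly $\gamma_\someoperad(g, \lis{f_i})$.

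The consequent statement $UF \cong \id_\Operad$ then follows because the bijections are natural in $\someoperad$: any operad map $q \colon \someoperad \to \someoperad'$ sends $(\ast, f_1)$ to $(\ast, q(f_1))$, which commutes with the bijection. I do not expect any serious obstacle here; the only mildly tedious step is the compatibility check for $\gamma$, which amounts to chasing the definitions of $\circ_v$ and $\circ_h$ in $F(\someoperad)$ from the previous proof, all of which collapse dramatically when the output profile is a single color.
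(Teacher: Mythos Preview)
Your approach is correct and essentially identical to the paper's, which simply records (in the sentence immediately preceding the proposition) that there is only one function $\theta\colon \set{1,\dots,n}\to\set{1}$ and then marks the result with a \qed; your additional checks that the bijection respects identities, $\Sigma$-actions, composition, and naturality are routine and correct. One small slip: the horizontal composite $f_1 \circ_h \cdots \circ_h f_n$ has underlying function the block map $\set{1,\dots,\sum p_i}\to\set{1,\dots,n}$ rather than the identity, but this does not affect your conclusion, since composing with $\ast$ still gives $\ast$ and the single component is indeed $\gamma_{\someoperad}(g,\lis{f_i})$.
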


\subsection{Relationship with categories}\label{S:adjcat} 
Informally, we can say that inside every operad lies a category which
makes up the linear part (i.e. the operations with one input and one
output) of that operad. In fact, we have an ``enrichment'' of the
category $\Prop$ over $\Cat$.\footnote{Coming from the fact that
$\Prop$ is enriched over itself; cf.~\ref{S:selfenriched}.} We can
assign to each operad $\someoperad$ a genuine category
$U_0(\someoperad)$ whose object set is the color set of $\someoperad$ and
has morphisms given by $U_0(\someoperad)(a,b):=\someoperad(a;b)$ for any
two colors $a,b$ in $\someoperad.$ Composition and identity operations
are induced by those of $\someoperad$. This relationship with category
theory is useful in making sense of ideas which do not have obvious
meaning in the setting of operads or props. 

The functor $U_0$ admits a left adjoint, denoted by $F_0$, which takes a
category $\C$ to an operad $F_0(\C)$ with $\obj(F_0\C):=\obj(\C)$. The
linear operations are just the composition maps of $\C$, i.e.
$F_0(\C)(a; b):=\C(a, b),$ and the higher operations are all trivial,
i.e. $F_0(\C)(a_1,...,a_n; b) =\varnothing$ for $n\neq 1$. Composition
and units are induced from $\C$ in the obvious way, and it is an easy
exercise to check the necessary axioms of an operad are satisfied.

\subsection{Graphs and megagraphs} 

We now fix our notion of (directed) graph, which is essentially the same
as that in \cite[A.1]{fresse}. The graphs in this paper have a finite
set of vertices $V$, a finite set of edges $E$, and functions
\begin{align*}
s: E &\to V_+ = V \sqcup \set{*} \\
t: E &\to V_+
\end{align*} 
which take an edge $e$ to its tail $s(e)$ and its head $t(e)$. Notice
that we allow for either of these to be trivial, i.e.  we allow
half-edges and edges that are incident to no vertices. A \emph{cycle} is
a list of edges $e_1, \dots, e_n$ such that $t(e_i) = s(e_{i+1})\in V$
and $t(e_n)=s(e_1)\in V$. We will want to work with graphs which do not
have cycles; in particular, we have no loops (cycles with $n=1$). We
will denote all the data of a graph by $G:=(E,V,s,t)$ and will write \[
\tin(v) = t^\inv(v) \qquad \qquad \tout(v)=s^\inv(v) \] for the sets of
input and output edges of a vertex.

A morphism of graphs $f: G \to G'$ consists of functions $f_E: E \to E'$
and $f_V: V_+ \to V'_+$ with $f_V(*) = *$, $f_V(v) \neq *$, $s f_E (e) =
f_V s(e)$, and $t f_E(e) = f_V t(e)$. The first two conditions ensure
that $f_E$ preserves the \emph{type} of edges, i.e. edges go to edges,
half-edges go to half-edges pointing in the same direction, and
non-incident edges go to non-incident edges. 

To define the underlying ``graphs'' of the category $\Prop$,  consider
the free monoid monad acting on a set $S$
\begin{align*}\label{freemonoidmonad}
\M: \Set &\to \Set \\
S & \mapsto \coprod_{k \geq 0} S^{\times k}. 
\end{align*}
Elements of $\M S$ are just (finite) ordered lists of elements of $S$.
There are right and left actions of the symmetric groups on the
components of $\M S$. More compactly we could say that there are both
right and left actions of the symmetric \emph{groupoid} $\Sigma =
\coprod_{n\geq 0} \Sigma_n$ on $\M S$. A $\Sigma$-bimodule is a set with
compatible left and right $\Sigma$-actions.

We now describe an extension of the notion of graph, namely one in which
edges are permitted to have multiple inputs and outputs.\footnote{Unlike
multigraphs, it makes sense to consider undirected megagraphs.} See
Figure~\ref{F:samplemega}.

\begin{figure} \includegraphics[height=2cm]{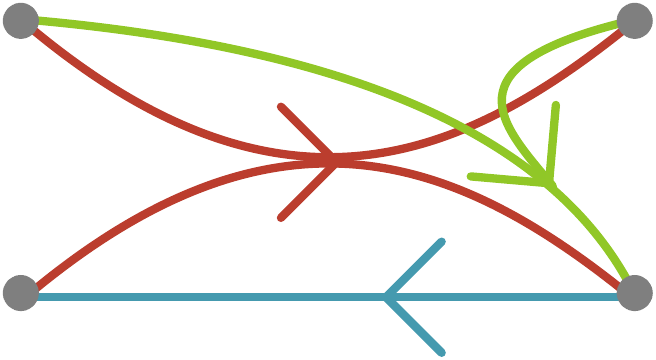} \caption{An
example megagraph with four vertices and three megaedges, each having a
different color.}\label{F:samplemega} \end{figure}

\begin{defn}A \emph{megagraph} $\X$ consists of a set of objects $X_0$,
a set of arrows $X_1$, two functions $s: X_1 \to \M X_0$ and $t: X_1 \to
\M X_0$, which we will write as the span \[ \M X_0 \overset{s}\leftarrow
X_1 \overset{t}\rightarrow \M X_0. \] Furthermore, $X_1$ should possess
both right and left $\Sigma$ actions.
These actions should have an interchange property $\tau \cdot (x\cdot
\sigma) = (\tau \cdot x) \cdot \sigma$ and should be compatible with
those on $\M X_0$, so $t( \tau \cdot x) = \tau \cdot t(x)$ and  $s(  x
\cdot \sigma) = s(x) \cdot \sigma$. \end{defn}

A map of megagraphs $f: \X \to \Y$ is determined by maps $f_0: X_0 \to
Y_0$ and $f_1: X_1 \to Y_1$ so that the diagram 
\[ \xymatrix{
\M X_0 \ar@{->}[d]^{\M f_0} & X_1 \ar@{->}[r]_-t \ar@{->}[l]^-s \ar@{->}[d]^{f_1}& \M X_0 \ar@{->}[d]^{\M f_0} \\
\M Y_0 & Y_1 \ar@{->}[r]_-t \ar@{->}[l]^-s  & \M Y_0
}\]
commutes. The collection of megagraphs determines a category which we
call $\Mega$. 

Notice that a megagraph $\X$ would be called a $X_0$-colored
$\Sigma$-bimodule in \cite{fmy}. We deal so frequently with color change
that the current viewpoint seems appropriate. We also would like to
point out that every megagraph has an underlying \emph{directed
hypergraph} (see \cite{hypergraph,torres}) obtained by forgetting the
symmetric group actions.

There is a forgetful functor $U$ from $\Prop$ to $\Mega$, defined by 
\begin{align*}
	(U\T)_0 &= \col \T \\ 
	(U\T)_1 &= 
	\coprod_{
		\substack{\lis{a_i}_{i=1}^n, \lis{b_j}_{j=1}^m \\ \in \M(\col \T)}} 
	\T(a_1, \dots, a_n; b_1, \dots, b_m) 
\end{align*} with
the induced source and target maps. 

\begin{thm}\label{T:freeprop}
The functor $U: \Prop \to \Mega$ has a left adjoint $F: \Mega \to \Prop$.
\end{thm}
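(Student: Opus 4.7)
The plan is to give an explicit construction of $F\X$ by decorated directed acyclic graphs. Set $\col(F\X) = X_0$. For lists $\lis{a_i}_{i=1}^n$ and $\lis{b_j}_{j=1}^m$ in $X_0$, take $F\X(\lis{a_i}; \lis{b_j})$ to be the set of isomorphism classes of finite acyclic graphs (in the sense fixed earlier) with $n$ input legs and $m$ output legs labeled and colored by the $a_i$'s and $b_j$'s, with all other edges colored by elements of $X_0$, and with each vertex $v$ labeled by an element $x_v \in X_1$ whose source and target profiles match, up to the $\Sigma$-actions on $X_1$ and on $\M X_0$, the input- and output-edge colorings of $v$. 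Two such decorated graphs are identified when there is an isomorphism of underlying graphs preserving the leg labels, together with compatible $\Sigma$-equivariance data at each vertex.

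Vertical composition is grafting: one glues the output legs of the lower graph onto the correspondingly-colored input legs of the upper graph, which cannot introduce a cycle. Horizontal composition is disjoint union with inputs and outputs concatenated in order. The identity $\id_c$ is the ``bare'' graph consisting of a single edge colored $c$ and no vertex, and more generally the identity on a list of colors is a disjoint union of such bare edges. The left and right $\Sigma$-actions permute the labels of the output and input legs, respectively. Once the equivalence relation is in place, verifying the prop axioms \eqref{E:identities}--\eqref{E:horizswap} for $F\X$ is an essentially combinatorial check at the level of graphs.

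For the adjunction, given a megagraph map $g: \X \to U\T$, one defines $\overline{g}: F\X \to \T$ by sending a decorated graph $G$ to its ``value'' in $\T$: choose a linear extension of the acyclic partial order on $V(G)$, apply $g_1$ to each vertex label, insert the $\sigma^*$ and $\tau_*$ needed to reconcile the internal ordering of edges at a vertex with the profiles of $s(x_v)$ and $t(x_v)$, then iteratively combine horizontally within each level and vertically between successive levels. Any prop map extending $g$ is forced to take the above form on one-vertex graphs and, by respecting the prop operations, on all graphs. This gives uniqueness; the natural bijection $\Prop(F\X, \T) \cong \Mega(\X, U\T)$ is then immediate.

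The technical heart of the argument is showing that $\overline{g}$ is well defined. Independence of the linear extension relies on the interchange rule \eqref{E:hvcompat}: two linear extensions of a finite poset differ by a sequence of adjacent transpositions of incomparable elements, and each transposition is absorbed by one application of \eqref{E:hvcompat}. Independence of the representative of the equivalence class of the decorated graph uses \eqref{E:vertcompat}, \eqref{E:horizcompat}, and \eqref{E:horizswap}, which were designed precisely to encode how $\Sigma$-actions move past vertical and horizontal compositions. This bookkeeping — tracking which permutations are induced by which choices of ordering and checking that each axiom of $\T$ absorbs them correctly — is the main obstacle; once it is carried out, functoriality of $F$ and the naturality of the adjunction bijection are routine.
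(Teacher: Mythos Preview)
Your proposal is correct and follows the same overall strategy as the paper---the free prop is built from equivalence classes of $\X$-decorated acyclic graphs, and the adjunction is established by evaluating such graphs in $\T$---but the technical argument for well-definedness of the extension $\overline{g}$ is organized differently. You choose a linear extension of the vertex poset, evaluate level by level, and argue independence of this choice via the combinatorial fact that any two linear extensions of a finite poset differ by a sequence of adjacent transpositions of incomparable elements, each absorbed by the interchange law. The paper instead filters $\Gamma$ by vertex count and builds partial maps $K_p:\Gamma_p\to\T$ inductively from $K_{p-1}$; its key lemma shows that for a \emph{connected} decoration $\mathfrak{g}$ on $p$ vertices, any two proper two-term vertical decompositions $\mathfrak{g}^1\circ_v\mathfrak{g}^2=\mathfrak{g}=\mathfrak{g}^3\circ_v\mathfrak{g}^4$ yield the same $K_{p-1}$-value, by producing a common three-term refinement $\mathfrak{g}^{13}\circ_v\mathfrak{g}^\dagger\circ_v\mathfrak{g}^{24}$ out of the pairwise intersections $G^{ij}$. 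Your approach is the familiar coherence-by-normal-form argument and is arguably more elementary; the paper's is more intrinsic to graph decompositions, never fixes a total order, and localizes the inductive step to a single explicit refinement.

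One minor caution: an adjacent transposition of incomparable vertices is not absorbed by \eqref{E:hvcompat} alone. Each ``level'' is of the form $\sigma^*\tau_*(g_1(x_v)\circ_h\id\circ_h\cdots)$, and before interchange can be applied you must conjugate by block permutations---using \eqref{E:vertcompat}, \eqref{E:horizcompat}, and \eqref{E:horizswap}---to bring the two adjacent layers into the shape $(f\circ_h\id)\circ_v(\id\circ_h f')$. So the axioms you reserve for ``independence of representative'' are already needed for ``independence of linear extension''; the two steps are not as cleanly separated as your outline suggests.
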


The proof of this theorem is contained in appendix~\ref{S:freeprop}. We
would like to note that our construction of $F(\X)$ is necessarily
isomorphic to that in the fixed color setting given in \cite{fmy}, but
we still need to show adjointness in the case where the color sets may
vary and maps need not preserve color. For this purpose we prefer to
have a very explicit description of $F(\X)$.

\section{The category of props is complete and cocomplete}

Limits in $\Prop$ are obtained by taking the corresponding
limits on colors and morphisms in $\Set$. The goal for this section is
thus to show

\begin{thm}\label{T:cocomplete}
The category $\Prop$ is cocomplete.
\end{thm}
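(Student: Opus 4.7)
The plan is to leverage the free prop adjunction $F \dashv U : \Prop \rightleftarrows \Mega$ from Theorem~\ref{T:freeprop}, reducing the problem to cocompleteness of $\Mega$ together with a monadicity argument. Once $\Prop$ is exhibited as the category of algebras for a finitary monad on a cocomplete category, its cocompleteness follows from standard categorical algebra.

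The first step is to verify that $\Mega$ is cocomplete. Given a small diagram $\X_\bullet$ of megagraphs, the color object of the colimit is $Y_0 := \operatorname{colim}(X_\bullet)_0$ in $\Set$. Applying the (covariant) free monoid functor $\M$ to the coprojections produces maps $\M(X_i)_0 \to \M Y_0$ along which one can transport each source/target pair. The arrow object $Y_1$ is then obtained as the colimit in $\Set$ of the $\Sigma$-bisets $(X_i)_1$ equipped with their transported source/target maps; the resulting two-sided $\Sigma$-action and the interchange property descend automatically because they are expressed by objectwise set-theoretic equations. Equivalently, $\Mega$ is the category of models for a suitable finite-limit sketch, hence is locally presentable.

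Second, I would apply Beck's monadicity theorem to $F \dashv U$. That $U$ reflects isomorphisms is immediate, since a prop homomorphism is invertible iff it is bijective on colors and on each set of operations, which is precisely being an isomorphism of underlying megagraphs. For creation of coequalizers of $U$-split pairs, given a parallel pair $f, g : \R \to \T$ of prop homomorphisms whose image admits a splitting in $\Mega$, the splitting furnishes canonical representatives of equivalence classes; one then transports the vertical composition, horizontal composition, identities, and symmetric group actions from $\T$ to the coequalizer object $Y$ in $\Mega$, and verifies the prop axioms~\eqref{E:identities}--\eqref{E:horizswap} on $Y$.

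Finally, the construction of $F$ carried out in the appendix builds each element of $F(\X)$ from a graph with finitely many vertices and edges, so such an element depends on only finitely many arrows of $\X$; it follows that the monad $T = UF$ preserves filtered colimits, i.e.\ is finitary. Since $\Mega$ is cocomplete and $T$ is finitary, the Eilenberg--Moore category $\Mega^T$ is cocomplete by standard results, and monadicity identifies $\Mega^T$ with $\Prop$. The main obstacle is the second step: tracing carefully how the interchange rule~\eqref{E:hvcompat} and the symmetry compatibilities~\eqref{E:horizcompat}--\eqref{E:horizswap} behave under a $U$-split quotient, so as to ensure that composites of equivalence classes are unambiguously defined via the splittings and that no prop axiom is violated on $Y$.
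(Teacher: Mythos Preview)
Your plan is sound but follows a genuinely different route from the paper.  The paper does not use the free/forgetful adjunction $F\dashv U:\Prop\rightleftarrows\Mega$ at all for this result.  Instead it works with the adjunction $L\dashv U:\Prop\rightleftarrows\Perm$ into permutative categories and shows that $L$ is \emph{comonadic}: it proves that $L$ reflects isomorphisms (Lemma~\ref{L:reflectisomorphism}) and preserves equalizers (Lemma~\ref{L:preserveequalizer}), then invokes the dual Beck criterion to identify $\Prop$ with coalgebras for the comonad $LU$ on $\Perm$.  Cocompleteness then descends from $\Perm$, which is cocomplete by \cite[4.1]{elmendorfmandell}.  Your approach ascends from $\Mega$ via monadicity and a finitary monad; it is arguably more self-contained since the cocompleteness of $\Mega$ is elementary, whereas the paper imports cocompleteness of $\Perm$ from Elmendorf--Mandell.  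On the other hand, the paper's verification that $L$ preserves equalizers is a short direct computation, while your creation of $U$-split coequalizers---which you rightly flag as the main obstacle---requires checking that all of the prop axioms \eqref{E:identities}--\eqref{E:horizswap} descend along a split quotient, a routine but lengthier bookkeeping exercise.  The paper's route also has the side benefit of situating $\Prop$ inside $\Perm$, which connects to the adjunction chain~\eqref{E:adjunctions} and to the operadic story it is adapting.
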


Our proof is a minor adaptation of that in \cite[\S 4]{elmendorfmandell} for the category of multicategories. 

Recall that a \emph{permutative category} is a symmetric monoidal
category $\C$ with a strictly associative product $\oplus$, a strict
unit $0$, a swap map $\gamma: a \oplus b \cong b\oplus a$ which has the
\emph{equalities} $\gamma \gamma = \id$, $\gamma = (\gamma \oplus 1)
(1\oplus \gamma)$, and $(a\oplus 0 \overset\gamma\to 0 \oplus a
\overset=\to a) = (a\oplus 0 \overset=\to a)$; see \cite[3.1]{elmendorfmandell1}. A \emph{strict map} $f: \C \to \mcd$ of
permutative categories is a functor with $f(a\oplus b) = fa \oplus fb$,
$f(0)=0$, and \[ [f(a\oplus b) \overset=\to fa \oplus fb
\overset\gamma\to fb \oplus fa]= [f(a\oplus b) \overset{f\gamma}\to f(b
\oplus a) \overset=\to fb \oplus fa].\] Let $\Perm$ be the category of
permutative categories and strict morphisms, which is cocomplete by
\cite[4.1]{elmendorfmandell}. 
There is a functor $U : \Perm \to
\Prop$ which is given on objects by
\begin{align*} 
\col U(\C) &= \ob \C \\ 
	U(\C)(c_1, \dots, c_n; d_1, \dots, d_m) &= 
	\C(c_1 \oplus \dots \oplus c_n, d_1 \oplus \dots \oplus d_m). 
\end{align*}
If $f: \C \to \D$ is a homomorphism of permutative categories, then there is an evident homomorphism of props $U(f): U(\C) \to U(\D)$ given by
\begin{equation*}
	\col U(\C) = \ob \C \to \ob \D = \col U(\D) 
\end{equation*}
\begin{multline*}
		U(\C)(\lis{c_i}_{i=1}^n; \lis{d_j}_{j=1}^m) = 
		\C(c_1 \oplus \dots \oplus c_n, d_1 \oplus \dots \oplus d_m) \\
		\overset{f}\to \D(fc_1 \oplus \dots \oplus fc_n, fd_1 \oplus \dots \oplus fd_m) = U(\D)(\lis{fc_i}_{i=1}^n; \lis{fd_j}_{j=1}^m).
\end{multline*}

\begin{rem}
Notice that \emph{all} props arise in this way -- a prop  is the same
thing as a permutative category $\C$ which has a set of indecomposable
objects $S$ and $\ob \C = \M S$ with $\oplus$ given by
concatenation.\footnote{This unraveling of the definition in the
monochrome case is pointed out in \cite{pirashvili}.} 
\end{rem}

\begin{prop}
The functor $U$ has a left adjoint.
\end{prop}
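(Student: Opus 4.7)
The plan is to upgrade the preceding remark into a functor and verify the adjunction. Given a prop $\T$, define a permutative category $F\T$ with object set $\M(\col \T)$, monoidal product $\oplus$ given by concatenation of lists (with strict unit the empty list), morphism sets $F\T(\lis{a_i}_{i=1}^n, \lis{b_j}_{j=1}^m) := \T(\lis{a_i}_{i=1}^n; \lis{b_j}_{j=1}^m)$, categorical composition given by vertical composition $\circ_v$, and tensor product of morphisms given by horizontal composition $\circ_h$. The symmetry $\gamma_{\lis{a},\lis{b}}: \lis{a}\oplus \lis{b} \to \lis{b}\oplus \lis{a}$ is defined as $(\sigma_{n,m})_{*}$ applied to the identity $\id_{a_1}\circ_h \cdots \circ_h \id_{a_n}\circ_h \id_{b_1}\circ_h\cdots\circ_h \id_{b_m}$, where $\sigma_{n,m}$ is the block-swap of (\ref{E:horizswap}). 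A prop homomorphism $\T \to \T'$ induces the evident strict permutative functor $F\T \to F\T'$ by applying $\M$ to the color map and acting as given on hom-sets.

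Next I would verify that $F\T$ is indeed a permutative category by reading each axiom off from the corresponding prop axiom: the category axioms follow from associativity of $\circ_v$ and the identity laws (\ref{E:identities}); strict associativity of $\oplus$ on morphisms is associativity of $\circ_h$, and the middle-four interchange between $\circ_v$ and $\oplus$ is (\ref{E:hvcompat}); the equation $\gamma\gamma = \id$ follows from the $\Sigma$-action being a genuine action together with (\ref{E:vertcompat}); and the hexagon and unit conditions for $\gamma$, as well as naturality of $\gamma$ in both variables, follow from (\ref{E:horizcompat}) and (\ref{E:horizswap}).

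To exhibit $F$ as left adjoint to $U$, I would show that a strict permutative functor $\phi: F\T \to \C$ is precisely the same data as a prop homomorphism $\T \to U\C$. Indeed, such a $\phi$ is determined by its values on the indecomposable objects (equivalently, a function $\col \T \to \ob \C$) together with its action on hom-sets $\T(\lis{a_i};\lis{b_j}) \to \C(\phi a_1 \oplus \cdots \oplus \phi a_n, \phi b_1 \oplus \cdots \oplus \phi b_m) = U\C(\lis{\phi a_i};\lis{\phi b_j})$, since strictness forces the value of $\phi$ on every list. Under this correspondence, functoriality of $\phi$ translates to preservation of $\circ_v$ and identities, strict preservation of $\oplus$ to preservation of $\circ_h$, and preservation of $\gamma$ to preservation of the $\Sigma$-actions, so the data is exactly that of a prop map $\T \to U\C$. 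The resulting bijection is visibly natural in $\T$ and $\C$, yielding the adjunction $F \dashv U$.

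The main obstacle is the verification of the permutative category axioms involving $\gamma$, especially the hexagon identity: this requires careful bookkeeping of block-swap permutations and repeated appeals to (\ref{E:horizcompat}) and (\ref{E:horizswap}). Everything else is a direct translation between the prop axioms and the permutative category axioms.
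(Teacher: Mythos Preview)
Your construction is exactly the paper's: objects are $\M(\col \T)$ with concatenation as $\oplus$, hom-sets are those of $\T$, composition is $\circ_v$, and tensor of morphisms is $\circ_h$. The paper's proof is terser than yours---it simply writes down $L(\Q)$ and asserts functoriality, leaving the verification that $L(\Q)$ is permutative and that $L\dashv U$ entirely implicit---so your outline, including the explicit definition of $\gamma$ via block-swap permutations and the translation between prop homomorphisms and strict permutative functors, fills in details the paper omits.
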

\begin{proof} The left adjoint $L$ is constructed as follows. If $\Q$ is
a prop, then the objects of $L(\Q)$ are finite lists of colors of $\Q$:
\[ \ob L(\Q) = \M \col(\Q) = \coprod_{k\geq 0} \col(\Q)^{\times k}. \] 
The monoidal product of two lists is
given by concatenation. 
Given two lists $\lis{a_i}_{i=1}^n$ and $\lis{b_j}_{j=1}^m$, we define \[ L(\Q) (\lis{a_i}_{i=1}^n, \lis{b_j}_{j=1}^m) = \Q(\lis{a_i}_{i=1}^n, \lis{b_j}_{j=1}^m). \]
If $f: \Q \to \Q'$ is a prop homomorphism, then we get a homomorphism of permutative categories which on objects is
\[ \ob L(\Q) = \M \col (\Q) \overset{\M f}\longrightarrow \M \col(\Q') = \ob L(\Q') \]
and on morphisms is induced directly from $\Q$.
\end{proof}

\begin{rem} The left adjoint $\Operad \to \Perm$ given in
\cite{elmendorfmandell}  factors through our $L$. Specifically, the main
part of their construction is actually giving the map $F: \Operad \to
\Prop$ from Proposition~\ref{P:operadpropadjunction}. We have a
composition of adjunctions \begin{equation} \Operad
\overset{F}\rightleftarrows \Prop \overset{L}\rightleftarrows \Perm
\label{E:adjunctions}\end{equation} which recovers the adjunction
\cite[4.2]{elmendorfmandell}. \end{rem}

\begin{lem}\label{L:reflectisomorphism}
The left adjoint $L: \Prop \to \Perm$ reflects isomorphisms.
\end{lem}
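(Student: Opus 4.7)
My plan is to unpack what an isomorphism in $\Perm$ means and to show that an inverse strict permutative functor $L(f)^{-1}$ automatically restricts to a prop homomorphism inverse to $f$, using that $L$ is faithful and essentially tautological in how it repackages propic data.

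First I would observe that since strict permutative functors between permutative categories are isomorphisms in $\Perm$ precisely when they are bijective on objects and on each hom set. So assuming $L(f)$ is an isomorphism, the map on objects
\[ \ob L(\Q) = \M\col(\Q) \xrightarrow{\M \col(f)} \M \col(\Q') = \ob L(\Q') \]
is a bijection. Because $\M S = \coprod_{k\geq 0} S^{\times k}$ is constructed componentwise from $S$ and the free monoid functor preserves length, $\M \col(f)$ is a bijection if and only if $\col(f)$ is a bijection of sets. Let $g_0 := \col(f)^\inv$.

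Next I would use that $L(f)$ is bijective on hom sets. By definition of $L$, for every pair of lists $\lis{a_i'}_{i=1}^n, \lis{b_j'}_{j=1}^m \in \M\col(\Q')$, setting $a_i = g_0 a_i'$ and $b_j = g_0 b_j'$, the map
\[ \Q(\lis{a_i}_{i=1}^n; \lis{b_j}_{j=1}^m) \to \Q'(\lis{a_i'}_{i=1}^n; \lis{b_j'}_{j=1}^m) \]
induced by $f$ is a bijection. Defining $g: \Q' \to \Q$ on colors by $g_0$ and on operations by the inverses of these bijections, I obtain a color-and-hom-set inverse to $f$ at the level of underlying data.

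Finally I would verify that $g$ is a prop homomorphism, that is, that it commutes with vertical composition, horizontal composition, identities, and the symmetric group actions. This is automatic: each axiom is a pointwise equation between elements of hom sets of $\Q$, both of which may be transported along the hom-set bijections to an equation in $\Q'$ that holds because $f$ is a prop homomorphism. Thus $f \circ g = \id_{\Q'}$ and $g \circ f = \id_{\Q}$ in $\Prop$, so $f$ is an isomorphism. I do not expect any real obstacle here; the only subtle point is that one has to check that the structural compatibilities (e.g.~the interchange \eqref{E:hvcompat} and horizontal-swap \eqref{E:horizswap}) transfer via the inverse hom-set maps, but this follows formally from the bijectivity together with $f$ being a prop map.
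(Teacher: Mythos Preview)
Your argument is correct and takes a genuinely different, more elementary route than the paper.

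The paper argues via the adjunction unit $\eta: \id_{\Prop} \Rightarrow UL$ and the terminal prop $\star$. It characterizes the image of $\eta_{\Q}$ inside $UL\Q$ as the preimage of the subprop of $UL\star$ generated by the length-one object $\set{1}$, then uses that $UL\alpha$ is an isomorphism over $UL\star$ together with injectivity of $\eta$ to conclude that $\alpha$ itself is an isomorphism. Your proof instead just unwinds the definition of $L$ directly: since $L(\Q)$ has object set $\M\col(\Q)$ and hom sets literally equal to those of $\Q$, bijectivity of $L(f)$ on objects forces $\col(f)$ to be a bijection (via the length-one component of $\M$), and bijectivity on homs gives bijectivity of $f$ on each $\Q(\lis{a_i};\lis{b_j})$; the inverse data then assemble into a prop map because $f$ preserves all structure and is bijective. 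Your approach is shorter and requires no auxiliary diagram chasing; the paper's approach has the mild advantage of isolating the injectivity of the unit $\eta$ as a reusable fact, and is phrased in a way that mirrors the analogous argument in \cite{elmendorfmandell}, but for the lemma at hand your direct argument is perfectly adequate.
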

\begin{proof} Let $\star$ be the terminal object of $\Prop$.
Specifically, $\col (\star) = \set{ 1 }$ is a one element set and
$\star(\underbrace{1, \dots, 1}_n; \underbrace{1, \dots, 1}_m)$ is a one
element set for all $n,m\geq 0$. We identify the
objects of the permutative category $L(\star)$ with 
the set of nonnegative integers $\N$.

Consider the unit $\eta: \id_{\Prop} \Rightarrow UL$. 
\newcommand{\length}{length}
If $\Q$ is a prop then $\eta_\Q: \Q \to UL\Q$ takes a color $a \in \col(\Q)$ to the one-element list $\lis{a} \in \col(UL\Q) = \M \col(\Q)$.
Consider the commutative diagram
\[ \xymatrix{
\Q \ar@{->}[r]^-{\eta_\Q} \ar@{->}[d] & UL\Q \ar@{->}[d] \\
\star \ar@{->}[r]^-{\eta_\star} & UL\star
}\]
and suppose that we have 
\[ \xymatrix@R=.2cm{ f: 
\lis{a^1_i}_{i=1}^{j_1}, \dots, \lis{a^n_i}_{i=1}^{j_n} 
\ar@{->}[r] \ar@{=}[d] &
\lis{b^1_i}_{i=1}^{k_1}, \dots, \lis{b^m_i}_{i=1}^{k_m} \ar@{=}[d] \\
\lis{\lis{a_i^x}_{i=1}^{j_x}}_{x=1}^n \ar@{->}[r] & \lis{\lis{b_i^y}_{i=1}^{k_y}}_{y=1}^m
}\] 
in $UL\Q$. The image of $f$ in $UL\star$ is \[ \bar{f}: \lis{j_x}_{x=1}^n \to \lis{k_y}_{k=1}^m \]
If all of these list lengths $j_x, k_y$ are $1$, then $f$ is in the image of $\eta_\Q$. 
On the other hand, if
$f$ is in the image of $\eta_\Q$, then $\bar f$ is in the image of 
$\eta_\star$,
so all of these lengths must be one.
In other words, the image of $\eta$ is the preimage of the subprop\footnote{cf. Definition~\ref{subprop}} of $UL\star$ which is generated by single object $\set{ 1 }$.

Suppose that $\alpha: \Q \to \Q'$ is a map of props so that $L\alpha$ is
an isomorphism. The diagram
\[ \xymatrix{
\Q \ar@{->}[rr]^\alpha \ar@{->}[d]^\eta & &
\Q' \ar@{->}[d]^\eta \\
UL \Q \ar@{->}[rr]^{UL\alpha}_\cong \ar@{->}[dr] &&
UL \Q' \ar@{->}[dl] \\
&UL \star
}\]
shows that $UL\alpha$ is an isomorphism between the preimages as above.
By construction of $L\Q$, the unit is injective, so $\alpha$ is an
isomorphism.
\end{proof}

\begin{lem}\label{L:preserveequalizer}
The functor $L$ preserves equalizers. 
\end{lem}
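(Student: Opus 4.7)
The plan is to compute both equalizers explicitly and observe that the natural comparison map is the identity. Since equalizers in $\Set$ are simply subsets cut out by an equation, and limits in both $\Prop$ and $\Perm$ are formed on underlying sets of colors/objects and morphisms/arrows, an explicit description is available in each case.

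First, I would construct the equalizer in $\Prop$ of two prop homomorphisms $f, g \colon \Q \to \Q'$. Let $E$ be the prop with
\[ \col E = \{ c \in \col \Q : fc = gc \} \]
and, for any profile in $\col E$,
\[ E(\lis{a_i}_{i=1}^n; \lis{b_j}_{j=1}^m) = \{ x \in \Q(\lis{a_i}_{i=1}^n; \lis{b_j}_{j=1}^m) : f(x) = g(x) \}. \]
The horizontal and vertical compositions, identities, and symmetric group actions all restrict from $\Q$ to $E$ because $f$ and $g$ are prop homomorphisms (so both preserve all of these structures); hence the inclusion $\iota \colon E \hookrightarrow \Q$ is a prop map equalizing $f$ and $g$, and the universal property is immediate. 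Similarly, the equalizer of strict maps $F, G \colon \C \to \mcd$ in $\Perm$ is the permutative subcategory of $\C$ whose objects and morphisms strictly equalize $F$ and $G$; this is closed under $\oplus$ because $F$ and $G$ strictly preserve $\oplus$ and $0$.

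Next, I would compute $L(E)$ and the equalizer of $Lf, Lg \colon L\Q \to L\Q'$ and compare them directly. By construction,
\[ \ob L(E) = \M\col(E) = \{ \lis{a_i}_{i=1}^n \in \M\col(\Q) : fa_i = ga_i \text{ for all } i \}, \]
which is exactly the set of $\lis{a_i}$ with $Lf(\lis{a_i}) = \lis{fa_i} = \lis{ga_i} = Lg(\lis{a_i})$, i.e.\ the object set of the equalizer of $Lf$ and $Lg$ in $\Perm$. On morphisms, the definition of $L$ gives
\[ L(E)(\lis{a_i}_{i=1}^n, \lis{b_j}_{j=1}^m) = E(\lis{a_i}_{i=1}^n; \lis{b_j}_{j=1}^m), \]
which coincides with $\{ x \in L\Q(\lis{a_i}, \lis{b_j}) : Lf(x) = Lg(x) \}$ since on morphism sets $Lf$ acts simply by $f$. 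The permutative structure (concatenation, symmetry) on both sides is inherited from $L\Q$ in the same way, so the natural comparison map $L(E) \to \operatorname{eq}(Lf, Lg)$ is the identity on objects and morphisms.

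There is not much of an obstacle here, since both equalizers are cut out by exactly the same pointwise conditions on $\Q$ and $L$'s construction is essentially tautological on morphism sets; the only thing to be careful about is the list-level bookkeeping in $L$, which does nothing more than promote an objectwise equation on colors to the same equation on every entry of a list. This yields $L(E) \cong \operatorname{eq}(Lf, Lg)$, so $L$ preserves equalizers.
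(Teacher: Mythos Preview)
Your proof is correct and follows essentially the same route as the paper: both arguments compute the equalizer in $\Prop$ and in $\Perm$ explicitly as sub-objects cut out by pointwise equations, and then compare $L$ of the former with the latter by checking that the equation $Lf(\lis{a_i}) = Lg(\lis{a_i})$ on lists is equivalent to the entrywise equation $fa_i = ga_i$, with morphisms handled identically. The paper additionally cites \cite[4.5]{elmendorfmandell} for the fact that equalizers in $\Perm$ are created in $\Cat$, but this is exactly what you argue directly.
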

\begin{proof} 
As noted in the proof of \cite[4.5]{elmendorfmandell},
equalizers in $\Perm$ are created in $\Cat$. Consider the equalizer
diagrams
\begin{align*} \T' \overset\alpha\to \T & \underset\gamma{\overset\beta\rightrightarrows} \T'' \\
\C \to L\T & \underset{L\gamma}{\overset{L\beta}\rightrightarrows} L\T'',\end{align*}
where the first is in $\Prop$ and the second is in $\Perm$. We wish to
show that $\C = L\T'$; certainly $L\T' \ci \C$. Notice that $\ob \C \ci
\coprod_{k\in \N} (\col \T)^{\times k}$ is the subset consisting of lists
$\lis{a_i}_{i=1}^n$ such that $L\beta \left( \lis{a_i}_{i=1}^n\right) =
L\gamma \left(\lis{a_i}_{i=1}^n\right)$, i.e. $\beta a_i = \gamma a_i$ for all $i$. Thus if $\lis{a_i}_{i=1}^n \in \ob \C$ then so is
$a_i$. But then if $a \in \ob \C$, we have $\beta a = L\beta a = L\gamma
a = \gamma a$, so $\ob \C \ci \ob L\T'$. 

We have shown that $\ob \C = \ob L\T'$, and we know that $L\T' \ci \C
\ci L\T$. Suppose that $f\in \C(\lis{a_i}_{i=1}^n; \lis{b_j}_{j=1}^m)$.  Then \[
f\in L\T(\lis{a_i}_{i=1}^n; \lis{b_j}_{j=1}^m) = \T (a_1, \dots, a_n; b_1, \dots,
b_m) \] has the property that $\beta(f) = L\beta(f) = L\gamma(f) =
\gamma(f)$, so considering $f$ as an element of $\T (a_1, \dots, a_n;
b_1, \dots, b_m)$ we see that it is actually an element of $\T' (a_1,
\dots, a_n; b_1, \dots, b_m)$. 
Thus we have shown that $\C(\lis{a_i}_{i=1}^n; \lis{b_j}_{j=1}^m) \ci L\T' (\lis{a_i}_{i=1}^n; \lis{b_j}_{j=1}^m)$, and conclude that
$\C = L\T'$.
\end{proof}

\begin{proof}[Proof of Theorem~\ref{T:cocomplete}]
We apply the dual of \cite[Ch.3, Theorem 3.14]{ttt} to $L$, using that
$L$ has a right adjoint, $L$ reflects isomorphisms by
Lemma~\ref{L:reflectisomorphism}, $\Prop$ has all equalizers, and $L$
preserves equalizers by Lemma~\ref{L:preserveequalizer}. The cited
theorem then gives that the adjunction \[ L \colon \Prop
\rightleftarrows \Perm \colon U \] is comonadic. In other words, $\Prop$
is equivalent to the category of coalgebras over the comonad $LU$ on
$\Perm$. Cocompleteness of this category of coalgebras follows from that
of $\Perm$ (see  exercise 2 in \cite[VI.2]{maclane}), so $\Prop$ is 
cocomplete.
\end{proof}

\section{A closed symmetric monoidal structure on \texorpdfstring{$\Prop$}{Prop}}

\subsection{\texorpdfstring{$\Prop$}{Prop} is enriched over \texorpdfstring{$\Prop$}{Prop}}\label{S:selfenriched}
Suppose that $\R$ and $\T$ are two props. We define a mapping prop
between them, which we denote by $\inthom (\R, \T)$. The colors of
$\inthom(\R, \T)$ are just prop maps $\R \to \T$. We now must define a
propic natural transformation; to begin, let us take $p+q$ prop maps $f_1,
\dots, f_p, g_1, \dots, g_q$ from $\R$ to $\T$. A \emph{$(p,q)$ natural
transformation} 
\[ \xi: \blis{f_1, \dots, f_p} \Rightarrow \blis{g_1, \dots, g_q}\] 
is a collection of $\T$-morphisms
\[ \xi_a \in \T(f_1 a, \dots, f_pa; g_1 a, \dots, g_qa), 
\] 
one for each $a\in \col \R$. There is, of course,
some consistency condition: if $\phi: \blis{a_1, \dots, a_n} \to \blis{b_1, \dots,
b_m}$ is in $\R$, then the following 
octagon 
must commute.


\begin{equation} \xymatrix@=+12pt@R=+12pt{
			&	\lis{\lis{f_ja_i}_{j=1}^p}_{i=1}^n		\ar@{->}[r]^{\lis{\xi_{a_i}}} \ar@{->}[dl]_\cong	&	\lis{\lis{g_\ell a_i}_{\ell=1}^q}_{i=1}^n		\ar@{->}[dr]^\cong	&				\\
\lis{\lis{f_ja_i}_{i=1}^n}_{j=1}^p	\ar@{->}[d]_{\lis{f_j\phi}}		&				&				&	\lis{\lis{g_\ell a_i}_{i=1}^n}_{\ell=1}^q		\ar@{->}[d]^{\lis{g_\ell \phi}}	\\
\lis{\lis{f_jb_k}_{k=1}^m}_{j=1}^p		\ar@{->}[dr]_\cong	&				&				&	\lis{\lis{g_\ell b_k}_{k=1}^m}_{\ell=1}^q	\ar@{->}[dl]^\cong		\\
			&		\lis{\lis{f_jb_k}_{j=1}^p}_{k=1}^m	\ar@{->}[r]_{\lis{\xi_{b_k}}}	&		\lis{\lis{g_\ell b_k}_{\ell=1}^q}_{k=1}^m		&				\\
}
\label{DIAGknat}
\end{equation}
This is a convenient abuse of notation which we employ frequently. `Commutativity' of this octagon means precisely that 
\begin{equation}
	\bar \tau_* (\lis{g_\ell \phi}_{\ell=1}^q) \circ_v \tau_* (\lis{\xi_{a_i}}_{i=1}^n) = \bar \sigma^* (\lis{\xi_{b_k}}_{k=1}^m)  \circ_v \sigma^* (\lis{f_j\phi}_{j=1}^p )
\label{meaning-of-DIAGknat}
\end{equation}
where $\sigma,\bar \sigma, \tau,\bar\tau$ are the obvious interchange permutations given by the symbol `$\cong$' in \eqref{DIAGknat} and 
where angular brackets denote \emph{horizontal compositions}, e.g. \[
\lis{\xi_{a_i}} = \xi_{a_1} \circ_h \xi_{a_2} \circ_h \cdots \circ_h
\xi_{a_n}. \] 
We declare that $\inthom(\R, \T)( \lis{f_j}_{j=1}^p, \lis{g_\ell}_{\ell =1}^q)$ be the set of $(p,q)$ natural transformations $\blis{f_1, \dots,
f_p} \Rightarrow \blis{g_1, \dots, g_q}$.

\begin{prop}
The collection of natural transformations $\inthom(\R,\T)$ is a prop. 
\end{prop}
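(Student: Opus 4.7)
The plan is to define every piece of propic structure on $\inthom(\R,\T)$ pointwise from the corresponding structure on $\T$, and then reduce all verifications to either (a) the analogous axioms in $\T$, which transfer automatically, or (b) the coherence octagon \eqref{DIAGknat}, which is the only nontrivial thing to check. Concretely, for each prop map $f\colon \R \to \T$ I set $\id_f$ to be the $(1,1)$ natural transformation with components $(\id_f)_a = \id_{fa}$. Given $\xi\colon \blis{f_1,\dots,f_p} \Rightarrow \blis{g_1,\dots,g_q}$ and $\eta \colon \blis{g_1,\dots,g_q}\Rightarrow \blis{h_1,\dots,h_r}$, I define $(\eta \circ_v \xi)_a = \eta_a \circ_v \xi_a$. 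Given another $\xi'\colon \blis{f'_{j'}}\Rightarrow \blis{g'_{\ell'}}$, I set $(\xi\circ_h \xi')_a = \xi_a \circ_h \xi'_a$. Finally, for $\sigma \in \Sigma_p$ and $\tau\in \Sigma_q$, I set $(\sigma^*\xi)_a = \sigma^*(\xi_a)$ and $(\tau_*\xi)_a = \tau_*(\xi_a)$.

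The first substantive step is to verify that each of these outputs is indeed a natural transformation, i.e.\ satisfies \eqref{meaning-of-DIAGknat}. For $\id_f$ the octagon collapses to the identity $\lis{f\phi}=\lis{f\phi}$ via \eqref{E:identities}. For $\eta\circ_v \xi$ one stacks the naturality octagons for $\xi$ and $\eta$ vertically; associativity of $\circ_v$ and a single application of the interchange of the intermediate permutations gives the octagon for the composite. For the $\Sigma$-actions the octagon for $\tau_*\xi$ and $\sigma^*\xi$ follows from the octagon for $\xi$ by the compatibility rules \eqref{E:vertcompat} together with the action axioms on the last bullet of the axiom list.

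The main obstacle is naturality for horizontal composition $\xi \circ_h \xi'$. Evaluating the octagon for this composite requires rewriting $\lis{(\xi\circ_h\xi')_{a_i}}_{i=1}^n = \lis{\xi_{a_i} \circ_h \xi'_{a_i}}_{i=1}^n$ as a horizontal composite of the form $\lis{\xi_{a_i}}_{i=1}^n \circ_h \lis{\xi'_{a_i}}_{i=1}^n$ up to an interchange permutation of the kind appearing in \eqref{E:horizswap}, and similarly on the output side and for $\lis{(f_j, f'_{j'})\phi}$. Once this book-keeping is done using \eqref{E:horizcompat} and \eqref{E:horizswap}, the octagon for $\xi\circ_h\xi'$ becomes the horizontal composite (in $\T$) of the octagons for $\xi$ and $\xi'$, glued together via the interchange law \eqref{E:hvcompat}. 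This is the step where one must be most careful; it is the technical heart of the proposition.

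Once all four operations are shown to preserve naturality, the prop axioms for $\inthom(\R,\T)$ are immediate: since identities, vertical and horizontal compositions, and $\Sigma$-actions on $\inthom(\R,\T)$ were all defined component-by-component in $\T$, the identity relation \eqref{E:identities}, the interchange law \eqref{E:hvcompat}, the compatibilities \eqref{E:vertcompat} and \eqref{E:horizcompat}, the swap rule \eqref{E:horizswap}, and the fact that $\sigma^*,\tau_*$ commute and are actions, all hold in $\inthom(\R,\T)$ at each $a\in \col\R$ because they hold in $\T$. Consequently $\inthom(\R,\T)$ is a prop.
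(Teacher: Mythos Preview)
Your proposal is correct and follows essentially the same approach as the paper: define all propic structure pointwise in $\T$, check that each operation preserves the naturality octagon \eqref{DIAGknat}, and then observe that the prop axioms are inherited componentwise from $\T$. The paper's proof is much terser---it simply records the pointwise definitions and asserts that naturality ``follows by modifying the diagrams for $\xi$ and $\xi'$''---whereas you spell out the key manipulations (interchange for $\circ_v$, the block-permutation rewriting via \eqref{E:horizswap} for $\circ_h$) more explicitly, but there is no substantive difference in strategy.
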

\begin{proof}
Let $\xi$, $\xi'$ be natural transformations, and $f\in
\col(\inthom(\R,\T)) = \Hom(\R,\T)$. We define the prop structure by
defining the maps at each $a\in \col(\R)$:
\begin{align*}
(\xi \circ_v \xi')_a &= \xi_a \circ_v \xi'_a & (\id_f)_a &= \id_{f(a)} \\
(\xi \circ_h \xi')_a &= \xi_a \circ_h \xi'_a & \left[ \sigma^* \tau_* (\xi)\right]_a &= \sigma^* \tau_* \left[ \xi_a \right].
\end{align*}
All of the axioms of the prop then follow directly from the fact that
$\R$ and $\T$ are props. One must show that these actually give natural
transformations, but verifying that diagram \eqref{DIAGknat} commutes
for these various assignments follows by modifying the diagrams for
$\xi$ and $\xi'$.
\end{proof}

If one wishes to show something is a natural transformation, it is often
easier to show that the above diagram commutes on a \emph{generating
set}. We now prove that this is enough.

\begin{defn}\label{D:naturalwrt} Let $f_1, \dots, f_p, g_1, \dots, g_q
\in \Prop(\R, \T)$, $\phi: \lis{a_i}_{i=1}^n \to \lis{b_k}_{k=1}^m$ in
$\R$, and let $\xi$ assign, for each $a\in \col(\R)$, a map \[ \xi_a :
\blis{f_1a, \dots, f_pa} \to \blis{g_1a, \dots, g_qa} \] in $\T$. We say that
\emph{$\xi$ is natural with respect to $\phi$} if the diagram
\eqref{DIAGknat} on page~\pageref{DIAGknat} commutes for the map $\phi$.
If $S$ is a set of maps and $\xi$ is natural with respect to each $\phi
\in S$, then we say \emph{$\xi$ is natural with respect to $S$}.
\end{defn}

\begin{lem}Let 
$\xi$ be as in Definition~\ref{D:naturalwrt}. 
If $\xi$ is natural with respect to a composable pair $\phi$ and $\psi$,
then $\xi$ is natural with respect to $\phi \circ_v \psi$.
\end{lem}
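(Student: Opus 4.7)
The plan is to stack the naturality octagon for $\psi$ on top of the naturality octagon for $\phi$, yielding the naturality octagon for $\phi \circ_v \psi$. Say $\psi: \lis{c_\ell}_{\ell=1}^r \to \lis{a_i}_{i=1}^n$ and $\phi: \lis{a_i}_{i=1}^n \to \lis{b_k}_{k=1}^m$, so that $\phi \circ_v \psi : \lis{c_\ell}_{\ell=1}^r \to \lis{b_k}_{k=1}^m$ is defined.

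The essential ingredient is that each $f_j$ and each $g_\ell$, being a prop homomorphism, preserves vertical composition: $f_j(\phi \circ_v \psi) = f_j\phi \circ_v f_j\psi$ and $g_\ell(\phi \circ_v \psi) = g_\ell\phi \circ_v g_\ell\psi$. Combined with the interchange rule \eqref{E:hvcompat}, we get
\[ \lis{f_j(\phi \circ_v \psi)}_{j=1}^p = \lis{f_j\phi}_{j=1}^p \circ_v \lis{f_j\psi}_{j=1}^p, \]
and analogously on the $g$-side. Thus the left and right vertical edges of the octagon \eqref{DIAGknat} for $\phi \circ_v \psi$ factor into two vertical composites each.

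I would then form the pasted diagram: the bottom row $\lis{\xi_{a_i}}_{i=1}^n$ of the $\psi$-octagon is identified with the top row of the $\phi$-octagon. The hypothesis gives that both octagons commute, which expressed as in \eqref{meaning-of-DIAGknat} means
\begin{align*}
\bar\tau_*\lis{g_\ell \psi} \circ_v \tau_*\lis{\xi_{c_\ell}} &= \bar\sigma^*\lis{\xi_{a_i}} \circ_v \sigma^*\lis{f_j \psi}, \\
\bar\tau'_*\lis{g_\ell \phi} \circ_v \tau'_*\lis{\xi_{a_i}} &= \bar\sigma'^*\lis{\xi_{b_k}} \circ_v \sigma'^*\lis{f_j \phi}
\end{align*}
for the appropriate interchange permutations. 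Vertically composing the second equation on top of the first (appropriately permuted so the middle terms match up) and then repeatedly applying the interchange rule \eqref{E:hvcompat} together with the compatibility \eqref{E:vertcompat} between vertical composition and $\Sigma$-actions, the middle terms $\bar\sigma^*\lis{\xi_{a_i}}$ and $\tau'_*\lis{\xi_{a_i}}$ cancel (after reconciling their $\Sigma$-decorations), and one obtains the required identity
\[ \bar\tau''_*\lis{g_\ell(\phi \circ_v \psi)} \circ_v \tau''_*\lis{\xi_{c_\ell}} = \bar\sigma''^*\lis{\xi_{b_k}} \circ_v \sigma''^*\lis{f_j(\phi \circ_v \psi)} \]
for suitable block permutations $\sigma'', \bar\sigma'', \tau'', \bar\tau''$, which is naturality with respect to $\phi \circ_v \psi$.

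The real content is the trivial observation that prop homomorphisms preserve $\circ_v$; the principal obstacle is purely bookkeeping, namely verifying that the interchange permutations appearing in the stacked octagons compose correctly to the interchange permutations in the octagon for $\phi \circ_v \psi$. This follows from the fact that the interchange permutations in \eqref{DIAGknat} depend only on the shapes $(p, q)$ of $\xi$ and on the arity profiles of the morphism being considered, so stacking two naturality squares whose middle row has the shape $\lis{\lis{\xi_{a_i}}}$ produces exactly the block permutation appropriate to $\phi \circ_v \psi$, once pushed through \eqref{E:vertcompat}.
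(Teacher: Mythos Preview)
Your proposal is correct and follows essentially the same approach as the paper: both arguments stack the naturality octagon for $\psi$ on top of that for $\phi$, sharing the middle row $\lis{\xi_{a_i}}$, and observe that the outer boundary is the octagon for $\phi\circ_v\psi$. The paper's proof is simply the pasted diagram with no accompanying text, whereas you have spelled out the bookkeeping with \eqref{E:hvcompat} and \eqref{E:vertcompat} that justifies the gluing; but the underlying idea is identical.
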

\begin{proof}
\[ \xymatrix{
			&	\lis{\lis{f_ja_i}_{j=1}^p}_{i=1}^n		\ar@{->}[r]^{\lis{\xi_{a_i}}} \ar@{->}[dl]_\cong	&	\lis{\lis{g_\ell a_i}_{\ell=1}^q}_{i=1}^n		\ar@{->}[dr]^\cong	&				\\
\lis{\lis{f_ja_i}_{i=1}^n}_{j=1}^p	\ar@{->}[d]_{\lis{f_j\psi}}		&				&				&	\lis{\lis{g_\ell a_i}_{i=1}^n}_{\ell=1}^q		\ar@{->}[d]^{\lis{g_\ell \psi}}	\\
\lis{\lis{f_jb_k}_{k=1}^m}_{j=1}^p		\ar@{->}[d]_{\lis{f_j\phi}}	 \ar@{->}[r]^\cong & \lis{\lis{f_jb_k}_{j=1}^p}_{k=1}^m	\ar@{->}[r]_{\lis{\xi_{b_k}}}	&		\lis{\lis{g_\ell b_k}_{\ell=1}^q}_{k=1}^m	 \ar@{<->}[r]^\cong	&	\lis{\lis{g_\ell b_k}_{k=1}^m}_{\ell=1}^q	\ar@{->}[d]^{\lis{g_\ell \phi}}		\\
\lis{\lis{f_jc_h}_{h=1}^r}_{j=1}^p		\ar@{->}[dr]_\cong	&				&				&	\lis{\lis{g_\ell c_h}_{h=1}^r}_{\ell=1}^q	\ar@{->}[dl]^\cong		\\
			&		\lis{\lis{f_jc_h}_{j=1}^p}_{h=1}^r	\ar@{->}[r]_{\lis{\xi_{c_h}}}	&		\lis{\lis{g_\ell c_h}_{\ell=1}^q}_{h=1}^r		&				\\
}
\]

\end{proof}

\begin{lem}Let 
$\xi$ be as in Definition~\ref{D:naturalwrt}. 
If $\xi$ is natural with respect to $\phi$ and $\psi$, then $\xi$ is
natural with respect to $\phi \circ_h \psi$.
\end{lem}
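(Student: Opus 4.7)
The plan is to reduce naturality with respect to $\phi \circ_h \psi$ to the naturality of $\xi$ with respect to $\phi$ and to $\psi$ separately, by exploiting the interchange rule \eqref{E:hvcompat} and the compatibility \eqref{E:horizcompat} of horizontal composition with the symmetric group actions. Let $\phi: \lis{a_i}_{i=1}^n \to \lis{b_k}_{k=1}^m$ and $\psi: \lis{a'_i}_{i=1}^{n'} \to \lis{b'_k}_{k=1}^{m'}$, so that $\phi \circ_h \psi$ goes from the concatenated list $\lis{a_1,\dots,a_n,a'_1,\dots,a'_{n'}}$ to $\lis{b_1,\dots,b_m,b'_1,\dots,b'_{m'}}$. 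I would unpack what it means for the octagon \eqref{DIAGknat} to commute for $\phi \circ_h \psi$ via \eqref{meaning-of-DIAGknat}; call this equation $(\ast)$.

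The key observation is that each leg of the octagon for $\phi \circ_h \psi$ factors as a horizontal composite of the corresponding leg for $\phi$ and that for $\psi$. Since every $f_j$ and $g_\ell$ is a prop homomorphism, $f_j(\phi \circ_h \psi) = f_j\phi \circ_h f_j\psi$ and similarly for $g_\ell$, so the two vertical legs split. The horizontal legs $\lis{\xi_{a_i}}_{i=1}^{n+n'}$ and $\lis{\xi_{b_k}}_{k=1}^{m+m'}$ split by definition of $\lis{-}$ as iterated horizontal composition. The diagonal ``$\cong$'' legs are symmetry maps; I would verify that the shuffle permutation rearranging $\lis{\lis{-}_{j=1}^p}_{i=1}^{n+n'}$ into $\lis{\lis{-}_{i=1}^{n+n'}}_{j=1}^p$ is exactly the block sum (in the sense of $\Sigma_A \times \Sigma_B \hookrightarrow \Sigma_{A+B}$ in \eqref{E:horizcompat}) of the analogous shuffles for $\phi$ and for $\psi$; using \eqref{E:horizcompat}, the corresponding $\sigma^*$ or $\tau_*$ then factors as a horizontal composite of the $\sigma^*$/$\tau_*$ appearing in the two smaller octagons. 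Assembling these factorizations and applying the interchange law \eqref{E:hvcompat}, both sides of $(\ast)$ become horizontal composites of the two sides of the naturality equations for $\phi$ and for $\psi$, which are equal by hypothesis; hence $(\ast)$ holds.

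The main obstacle is purely combinatorial bookkeeping: checking that the ``big'' interchange shuffle on lists of length $n+n'$ (respectively $m+m'$) is genuinely the block product of the two smaller interchange shuffles. Once this is in hand, the rest is a mechanical application of \eqref{E:hvcompat} and \eqref{E:horizcompat}, and can be displayed as a single large diagram in which the top half is the octagon for $\phi$ horizontally composed with the octagon for $\psi$, each of which commutes by hypothesis.
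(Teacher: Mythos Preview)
Your overall strategy---reduce the big octagon to the horizontal composite of the two small octagons---is correct and is exactly what the paper does. But the specific combinatorial claim you isolate as ``the main obstacle'' is false, and this is a genuine gap.

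The interchange shuffle taking $\lis{\lis{-}_{j=1}^p}_{i=1}^{n+n'}$ to $\lis{\lis{-}_{i=1}^{n+n'}}_{j=1}^p$ is \emph{not} the block sum $\sigma_\phi \times \sigma_\psi$ of the two smaller interchange shuffles. Take $p=2$, $n=n'=1$: each small shuffle is the identity on two letters, so their block sum is the identity on four letters, while the big shuffle is the transposition $(2\;3)$. In general the big shuffle does not preserve the first-$pn$/last-$pn'$ split at all, so \eqref{E:horizcompat} alone cannot give you the factorization you want.

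What is missing is exactly the step the paper supplies: an intermediate passage from $\lis{\lis{f_j a_i}_{i=1}^{n+n'}}_{j=1}^p$ to the split form $\lis{\lis{f_j a_i}_{i=1}^n}_{j=1}^p,\ \lis{\lis{f_j a_i}_{i=n+1}^{n+n'}}_{j=1}^p$, which amounts to reordering the horizontal composition $\lis{(f_j\phi)\circ_h(f_j\psi)}_j$ into $\lis{f_j\phi}_j \circ_h \lis{f_j\psi}_j$. This reordering is governed by \eqref{E:horizswap}, not \eqref{E:horizcompat}. Once you insert this extra isomorphism on each side, the middle piece really is the horizontal composite of the two small octagons and your argument goes through. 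So: keep your plan, but replace the incorrect ``block sum'' claim with an application of \eqref{E:horizswap}.
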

\begin{proof}
Let
\begin{align*}
\phi: \blis{a_1, \dots, a_n} &\to \blis{b_1, \dots, b_m} \\
\psi: \blis{a_{n+1}, \dots, a_{n'}} &\to \blis{b_{m+1}, \dots, b_{m'}}
\end{align*}
In figure~\ref{F:naturalityhorizontal},
\begin{figure}
\rotatebox{90}{%
\parbox{8in}{%
\[\xymatrix{
			&	\lis{\lis{f_ja_i}_{j=1}^p}_{i=1}^{n'}		\ar@{->}[r]^{\lis{\xi_{a_i}}} \ar@{->}[dl]_\cong	\ar@{->}[d]_\cong &	\lis{\lis{g_\ell a_i}_{\ell=1}^q}_{i=1}^{n'}		\ar@{->}[dr]^\cong	\ar@{->}[d]^\cong &				\\
\lis{\lis{f_ja_i}_{i=1}^{n'}}_{j=1}^p	\ar@{->}[d]_{\lis{(f_j\phi)\circ_h (f_j\psi)}}		&		\lis{\lis{f_ja_i}_{i=1}^n}_{j=1}^p, \lis{\lis{f_ja_i}_{i=n+1}^{n'}}_{j=1}^p \ar@{->}[d]^{\lis{f_j\phi} \circ_h \lis{f_j\psi}}  \ar@{->}[l]_-\cong 	&	\ar@{->}[d]^{\lis{g_\ell \phi} \circ_h \lis{g_\ell \psi}} 	\lis{\lis{g_\ell a_i}_{i=1}^n}_{\ell=1}^q, \lis{\lis{g_\ell a_i}_{i=n+1}^{n'}}_{\ell=1}^q	 \ar@{->}[r]^-\cong	&	\lis{\lis{g_\ell a_i}_{i=1}^{n'}}_{\ell=1}^q		\ar@{->}[d]^{\lis{(g_\ell \phi) \circ_h (g_\ell \psi)}}	\\
\lis{\lis{f_jb_k}_{k=1}^{m'}}_{j=1}^p		\ar@{->}[dr]_\cong \ar@{->}[r]^-\cong 	&		\lis{\lis{f_jb_k}_{k=1}^m}_{j=1}^p, \lis{\lis{f_jb_k}_{k=m+1}^{m'}}_{j=1}^p \ar@{->}[d]_\cong 	&	\lis{\lis{g_\ell b_k}_{k=1}^m}_{\ell=1}^q, \lis{\lis{g_\ell b_k}_{k=m+1}^{m'}}_{\ell=1}^q		\ar@{->}[d]^\cong 	&	\lis{\lis{g_\ell b_k}_{k=1}^{m'}}_{\ell=1}^q	\ar@{->}[dl]^\cong	\ar@{->}[l]_-\cong	\\
			&		\lis{\lis{f_jb_k}_{j=1}^p}_{k=1}^{m'}	\ar@{->}[r]_{\lis{\xi_{b_k}}}	&		\lis{\lis{g_\ell b_k}_{\ell=1}^q}_{k=1}^{m'}		&				\\
}\]
} 
} 
\caption{Naturality of horizontal composition}\label{F:naturalityhorizontal}
\end{figure}
the middle rectangle commutes since it is the horizontal composition of
two octagons which commute, using that $\xi$ is natural with respect to $\phi$
and with respect to $\psi$. The left and right squares commute by
\eqref{E:horizswap}. 
\end{proof}

\begin{lem} Let 
$\xi$ be as in Definition~\ref{D:naturalwrt}. If $\xi$ is natural with respect to $\phi$, then $\xi$ is
natural with respect to $\sigma^* \phi$ and $\tau_* \phi$. \end{lem}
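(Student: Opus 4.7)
The strategy is to exhibit the naturality octagon \eqref{DIAGknat} for $\sigma^*\phi$ (respectively $\tau_*\phi$) as an image of the octagon for $\phi$ under symmetric group actions, so that its commutativity reduces to the already-known commutativity for $\phi$. Throughout, the essential fact is that each $f_j$ and $g_\ell$ is a prop homomorphism and so commutes with the $\Sigma$-actions.

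For the case of $\sigma^*\phi$ with $\sigma\in\Sigma_n$, I would proceed in three steps. First, by the homomorphism property, $f_j(\sigma^*\phi) = \sigma^*(f_j\phi)$ and $g_\ell(\sigma^*\phi) = \sigma^*(g_\ell\phi)$; this rewrites the left and right sides of the new octagon. Second, I would apply \eqref{E:horizcompat} to the horizontal composite $\lis{\sigma^*(f_j\phi)}_{j=1}^p$, factoring it as $(\sigma^{\times p})^*\lis{f_j\phi}_{j=1}^p$, where $\sigma^{\times p}\in\Sigma_{np}$ is the block-diagonal copy of $\sigma$; do likewise on the right. The top arrow of the new octagon, $\lis{\xi_{a_{\sigma(i)}}}_{i=1}^n$, can be rewritten similarly: since each $\xi_{a_i}$ has the same input/output profile, horizontally composing them in the $\sigma$-permuted order equals applying a block permutation on inputs and outputs to $\lis{\xi_{a_i}}_{i=1}^n$, using \eqref{E:horizswap} iteratively together with the interchange $\sigma^*\tau_*=\tau_*\sigma^*$. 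Third, I would use \eqref{E:vertcompat} to pull these symmetric actions outward through the vertical composites in \eqref{meaning-of-DIAGknat}; after canceling the common actions on each side, the naturality equation for $\sigma^*\phi$ collapses to the naturality equation for $\phi$, which holds by hypothesis.

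The case $\tau_*\phi$ is entirely dual. Here the top of the octagon $\lis{\xi_{a_i}}_{i=1}^n$ is unchanged, since the source of $\tau_*\phi$ agrees with that of $\phi$, while the bottom becomes $\lis{\xi_{b_{\tau^{-1}(k)}}}_{k=1}^m$. One uses the output-side identity $f_j(\tau_*\phi) = \tau_*(f_j\phi)$, the second line of \eqref{E:horizcompat} to extract block-diagonal copies $\tau^{\times p}\in\Sigma_{mp}$ and $\tau^{\times q}\in\Sigma_{mq}$, and again \eqref{E:vertcompat} to reduce to the naturality of $\xi$ with respect to $\phi$.

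The main obstacle is purely bookkeeping: one must check that the block-diagonal copies of $\sigma$ (respectively $\tau$) and the ``interchange'' permutations labeling the $\cong$ arrows in \eqref{DIAGknat} combine correctly on both sides. No new ideas beyond the prop axioms are required, which is why this is a short lemma. Alternatively, one could present the argument as a large pasting diagram in the style of Figure~\ref{F:naturalityhorizontal}, each cell commuting by one of \eqref{E:horizcompat}, \eqref{E:vertcompat}, the homomorphism property, or the naturality of $\xi$ with respect to $\phi$.
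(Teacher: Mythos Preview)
Your proposal is correct, and the argument you sketch will go through once the block-permutation bookkeeping is carried out; the only delicate point, which you flag, is verifying that the block-diagonal copy $\sigma^{\times p}$ (which permutes $p$ blocks of size $n$) interacts correctly with the ``interchange'' isomorphism (which swaps an $(i,j)$-indexed list into a $(j,i)$-indexed list). This boils down to a standard identity among permutations of $[1,np]$.

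The paper takes a rather different tack. Instead of manipulating the equation \eqref{meaning-of-DIAGknat} directly, it first observes from \eqref{E:vertcompat} that $\sigma^*\phi = \phi \circ_v (\sigma^*\id)$ and $\tau_*\phi = (\tau_*\id)\circ_v\phi$; that is, $\sigma^*\tau_*\phi$ factors through $\phi$ via isomorphisms on source and target. This produces, for each $j$, a small commutative square relating $f_j(\sigma^*\tau_*\phi)$ to $f_j\phi$ (and similarly for $g_\ell$). The octagon for $\sigma^*\tau_*\phi$ is then obtained by pasting these squares onto the outside of the known-commutative octagon for $\phi$, with two additional rectangles containing only block-permutation isomorphisms. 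The advantage of the paper's approach is that the permutation bookkeeping is localized in small squares (each involving a single $f_j$ or $g_\ell$) rather than spread across the entire naturality equation; it also makes the argument visibly parallel to the vertical-composition lemma, since one is essentially pasting diagrams rather than rewriting terms. Your approach has the merit of being more self-contained: it uses only the prop axioms and the homomorphism property, without needing to recognize the factorization $\sigma^*\phi = \phi\circ_v(\sigma^*\id)$.
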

\begin{proof} We will show that $\xi$ is natural with respect to $\sigma^* \tau_* \phi$.

Notice that \eqref{E:vertcompat} implies that $ \sigma^* \phi = \phi
\circ_v (\sigma^* \id) $ and $(\tau_* \id) \circ_v \phi = \tau_* \phi$.
In the current setting we thus have a commutative diagram
\[ \xymatrix{
\lis{f_j a_{\sigma(i)}}_{i=1}^n \ar@{->}[r]^{\sigma^* \id} \ar@{->}[d]^{f_j(\sigma^*\tau_* \phi)}
& \lis{f_j a_i}_{i=1}^n \ar@{->}[d]^{f_j \phi}\\
\lis{f_j b_{\tau^\inv (k)}}_{k=1}^m  & \lis{f_j b_k}_{k=1}^m \ar@{->}[l]^-{\tau_*\id}
}\]
for each $j$, where the top and bottom maps are isomorphisms. This
diagram also commutes if we replace $f_j$ by $g_\ell$.

We then have a commutative diagram
\[\xymatrix{
\lis{\lis{f_ja_{\sigma(i)}}_{j=1}^p}_{i=1}^{n}	\ar@{->}[r]^-{block}_-\cong \ar@{->}[d]^\cong	
&	\lis{\lis{f_ja_i}_{j=1}^p}_{i=1}^{n}		\ar@{->}[r]^{\lis{\xi_{a_i}}} 	\ar@{->}[d]_\cong 
&	\lis{\lis{g_\ell a_i}_{\ell=1}^q}_{i=1}^{n}		\\
\lis{\lis{f_ja_{\sigma(i)}}_{i=1}^{n}}_{j=1}^p	\ar@{->}[d]_{\lis{ f_j (\sigma^*\tau_* \phi)}}	 \ar@{->}[r]^{\lis{\sigma^* \id}}
&		\lis{\lis{f_ja_i}_{i=1}^n}_{j=1}^p  \ar@{->}[d]^{\lis{f_j\phi} }  	
&	
\\
\lis{\lis{f_jb_{\tau^\inv (k)}}_{k=1}^{m}}_{j=1}^p		\ar@{->}[d]^\cong 
&		\lis{\lis{f_jb_k}_{k=1}^m}_{j=1}^p \ar@{->}[d]_\cong 	\ar@{->}[l]_-{\lis{\tau_* \id}}
&	
\\
\lis{\lis{f_jb_{\tau^\inv (k)}}_{j=1}^{p}}_{k=1}^m \ar@{->}[r]^-{block}_-\cong
&		\lis{\lis{f_jb_k}_{j=1}^p}_{k=1}^{m}	\ar@{->}[r]_{\lis{\xi_{b_k}}}	
&		\lis{\lis{g_\ell b_k}_{\ell=1}^q}_{k=1}^{m}		
&				
\\
}\]
as well as a similar one for the $g_\ell$. These glue together along the
commutative octagon \eqref{DIAGknat} for $\phi$. The resulting large
commutative diagram shows that the octagon for $\sigma^*\tau_* \phi$
commutes.
\end{proof}

\begin{defn}\label{subprop}Suppose that $S$ is a set of morphisms in a prop $\T$. Then
the \emph{subprop generated by $S$}, denoted $\lis{S}$, is the smallest
subprop of $\T$ containing all elements of $S$. \end{defn}

This subprop $\lis{S}$ must contain all of the identity maps on the
colors appearing in the source and target lists of elements of $S$. It
must also contain all morphisms obtained by iterated compositions and
symmetric group actions from elements of $S$ and these identity maps.
The collection of such morphisms forms a prop, as we saw in the
construction of the free prop in appendix~\ref{S:freeprop}. Therefore we
have an alternate characterization of $\lis{S}$.

\begin{prop}\label{P:generating} Let $S$ be a set of morphisms in $\R$
and let $\lis{S}$ be the subprop of $\R$ generated by $S$. 
If $\xi$ is natural with respect to $S$ then $\xi$ is
natural with respect to $\lis{S}$. In particular, if $\R$ is generated
by $S$, then $\xi$ is a natural transformation. \end{prop}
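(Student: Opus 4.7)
The plan is to leverage the three preceding lemmas together with the explicit description of $\lis{S}$ hinted at just before the proposition statement. The subprop $\lis{S}$ is built from $S$ by iteratively taking vertical composites, horizontal composites, symmetric group actions, and by including the identity morphisms on colors appearing in $S$. Consequently, every element of $\lis{S}$ has a (generally non-unique) presentation as a finite tree of such operations applied to elements of $S$ and identities; I would prove the proposition by induction on the size of such a presentation.

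First I would dispense with the base cases. For $\phi \in S$, naturality with respect to $\phi$ is the hypothesis. For an identity $\id_c \in \R(c;c)$, the octagon \eqref{DIAGknat} specializes to the assertion that both composites reduce to $\lis{\xi_c}$; this is an immediate consequence of the identity axiom \eqref{E:identities} applied to each $f_j$ and each $g_\ell$, combined with the fact that $f_j$ and $g_\ell$ are prop homomorphisms (so $f_j(\id_c) = \id_{f_j c}$, etc.). The inductive step is precisely the content of the three lemmas immediately preceding: closure under $\circ_v$, closure under $\circ_h$, and closure under $\sigma^*$ and $\tau_*$. Assuming $\xi$ is natural with respect to the (finitely many) constituent morphisms of a presentation, applying whichever of these three lemmas corresponds to the outermost operation yields naturality with respect to the whole presentation.

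To make this rigorous I would invoke the explicit description of the free prop in Appendix~\ref{S:freeprop} (via Theorem~\ref{T:freeprop}): the subprop $\lis{S}$ is the image of the canonical prop map $F(\X_S) \to \R$, where $\X_S$ is the megagraph whose arrows are $S$ together with identity arrows on the relevant colors. Since every element of $F(\X_S)$ is built by a finite number of generating operations, an induction on this construction — base cases handled above, inductive step supplied by the three lemmas — shows that $\xi$ is natural with respect to every morphism in $\lis{S}$. The final clause of the proposition is the case $\lis{S} = \R$.

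The main obstacle I anticipate is bookkeeping rather than mathematical depth: one must verify that the base case for identities really does reduce to \eqref{E:identities}, which requires unpacking the isomorphism arrows in \eqref{DIAGknat} (they are induced by the interchange permutations $\sigma,\bar\sigma,\tau,\bar\tau$ appearing in \eqref{meaning-of-DIAGknat}) and checking that they act compatibly with $\id_{f_j c}$ and $\id_{g_\ell c}$. Everything else is immediate from the lemmas already in hand.
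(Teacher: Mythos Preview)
Your proposal is correct and follows essentially the same approach as the paper: the paper's one-line proof simply cites ``the preceding paragraph'' (which gives the inductive description of $\lis{S}$ via the free prop construction) together with the three preceding lemmas, exactly as you outline. Your explicit handling of the identity base case is a detail the paper leaves implicit, but otherwise the arguments coincide.
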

\begin{proof}
This follows from the preceding paragraph and the three preceding lemmas.
\end{proof}

We would hope that this enrichment be compatible with the existing
enrichment on the category of operads. We cannot insist that the
adjunction be enriched, since the categories are enriched over different
things. We do have

\begin{prop}\label{P:operadadjunction} Given the adjunction $ F \colon
\Operad \rightleftarrows \Prop \colon U, $ the isomorphism
$\Hom_{\Prop}(F(\someoperad), \T) \cong \Hom_{\Operad}(\someoperad,
U(\T))$ extends to an isomorphism of operads \[
U(\inthom(F(\someoperad), \T)) \cong \inthom_{\Operad}(\someoperad,
U(\T)) \] where the right hand side is the internal hom in $\Operad$.
\end{prop}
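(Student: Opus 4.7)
The plan is to exhibit the isomorphism of operads by matching colors and $p$-ary operations separately, then checking that the operadic structure (composition, identities, $\Sigma$-actions) agrees on both sides. On colors, both sides consist of operad maps $\someoperad \to U\T$, equivalently prop maps $F\someoperad \to \T$ by Proposition~\ref{P:operadpropadjunction}. For a profile $(f_1,\dots,f_p;g)$, a $p$-ary operation on the left is a $(p,1)$-natural transformation $\lis{f_1,\dots,f_p} \Rightarrow g$ in $\inthom(F\someoperad,\T)$, that is, a family $\xi_a \in \T(f_1a,\dots,f_pa;ga)$ for $a \in \col\someoperad$ making the octagon~\eqref{DIAGknat} commute for every $\phi$ in $F\someoperad$. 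A $p$-ary operation on the right is the same underlying data, but with naturality imposed only for $\phi$ in $\someoperad$, which by Proposition~\ref{P:UFequalsID} is canonically identified with the one-output part of $F\someoperad$ via the unit of the adjunction.

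The core step is to show that these two naturality conditions coincide. By Proposition~\ref{P:generating}, it suffices to verify that $F\someoperad$ is generated as a prop by (the image of) $\someoperad$. This is immediate from the explicit description in Proposition~\ref{P:operadpropadjunction}: any element $(\theta, \lis{f_j}_{j=1}^m) \in F\someoperad(\lis{a_i}_{i=1}^n;\lis{b_j}_{j=1}^m)$ can be written as $\sigma^*(f_1 \circ_h f_2 \circ_h \cdots \circ_h f_m)$, where $\sigma \in \Sigma_n$ reorders the inputs into the blocks dictated by $\theta$ and each factor $f_j \in \someoperad(\lis{a_i}_{\theta(i)=j};b_j)$ is a one-output operadic element; the identities $\id_c$ are themselves such elements. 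Hence naturality on $\someoperad$ extends to naturality on all of $F\someoperad$, and the two operation sets coincide as subsets of $\prod_a \T(f_1a,\dots,f_pa;ga)$.

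What remains is to check that the operadic structures match. The structure on $U(\inthom(F\someoperad,\T))$ is the one inherited via the forgetful functor $U\colon\Prop\to\Operad$, namely $\gamma(\eta, \lis{\xi_i}_{i=1}^p) = \eta \circ_v (\xi_1 \circ_h \cdots \circ_h \xi_p)$; evaluated at each $a$ this recovers the standard pointwise formula for operadic composition of natural transformations in $\inthom_{\Operad}(\someoperad,U\T)$. Identities and the $\Sigma_p$-action correspond pointwise as well, so the bijection on operations is an operad isomorphism. The principal obstacle is the generation statement of the second paragraph, but the explicit form of $F\someoperad$ makes it essentially an inspection; the rest is bookkeeping that leverages Proposition~\ref{P:generating} to avoid checking octagons for every morphism of $F\someoperad$ by hand.
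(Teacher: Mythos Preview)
Your proposal is correct and follows essentially the same approach as the paper: both arguments hinge on Proposition~\ref{P:generating}, reducing naturality in $F\someoperad$ to naturality with respect to the one-output morphisms coming from $\someoperad$, and both identify the color sets via the adjunction. You are slightly more explicit than the paper about the decomposition $(\theta,\lis{f_j}) = \sigma^*(f_1\circ_h\cdots\circ_h f_m)$ that witnesses generation, while the paper is more explicit about how the octagon~\eqref{DIAGknat} collapses in the $q=m=1$ case to the Elmendorf--Mandell naturality square; but these are matters of emphasis rather than substance.
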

\begin{proof} Suppose $\xi \in \inthom_{\Operad}(\someoperad, U(\T))$ is
a $p$-natural transformation $\blis{f_1, \dots, f_p} \to g$, where $f_1,
\dots, f_p, g$ are operad maps $\someoperad \to U(\T)$. This means that
we have maps $\xi_a: f_1a, \dots, f_pa \to g(a)$ in $U(\T)$ for each
$a\in \col \someoperad$ which satisfy the compatibility condition from
\cite[2.2]{elmendorfmandell}.

Let $\bar f_1, \dots, \bar f_p, \bar g$ be the adjoints of the above
operad maps and note that $\bar f a = f a$ for all $a\in
\col(\someoperad)=\col(F\someoperad)$.
Furthermore, the maps \[ \xi_a: \blis{\bar f_1a, \dots, \bar f_pa} \to \bar
g\] are already in $\T$ itself by the definition of $U$. We define the
adjoint \[ \bar \xi: \blis{\bar f_1, \dots, \bar f_p} \to \bar g \] to be
\[ \bar \xi_a = \xi_a: \blis{\bar f_1a, \dots, \bar f_pa} \to  \bar ga. \]

Notice that the class of maps $\phi: \blis{a_1, \dots, a_n} \to b$ generate
$F(\someoperad)$, so by Proposition~\ref{P:generating} it is enough to
show that \eqref{DIAGknat} commutes for $\phi \in F(\someoperad)$ with
one output. Now we are in the situation with $q=1$ and $m=1$, so the
octagon becomes
\[ \xymatrix{
\lis{\lis{ \bar f_j a_i}_{j=1}^p}_{i=1}^n \ar@{->}[r]^-{\lis{\bar \xi_{a_i}}} \ar@{->}[d]_\cong
& \lis{\bar g a_i}_{i=1}^n \ar@{->}[dd]^{\bar g\phi} \\
\lis{ \lis{  \bar f_j a_i   }_{i=1}^n }_{j=1}^p \ar@{->}[d]_{\lis{ \bar f_j \phi }}  & 
\\
\lis{ f_j b }_{j=1}^p  \ar@{->}[r]_{\bar\xi_{b}}
  & \bar{g} b
}\]
But since $\phi$ only has one output and $UF=\id$ by
Proposition~\ref{P:UFequalsID}, $\phi$ is actually in $\someoperad$ so
$\bar f_j \phi \in \T$ is actually in $U(\T)$, and equals $f_j \phi$.
Thus the diagram above is  a diagram in $U(\T)$, and is exactly the
diagram from \cite[2.2]{elmendorfmandell}, so commutes.

Conversely, suppose that we have a $(p,1)$-natural transformation \[
\xi: \blis{f_1, \dots, f_p} \to g \] in $\inthom(F(\someoperad), \T)$,
where $f_1, \dots, f_p, g$ are prop maps from $F(\someoperad) \to \T$.
The collection of $(p,1)$-natural transformations constitute the set of
arrows of $U(\inthom(F(\someoperad), \T))$. Let $\bar f_1, \dots, \bar
f_p, \bar g: \someoperad \to U(\T)$ be the adjoints of $f_1, \dots, f_p,
g$. Since $\xi_a \in \T$ has only one output, it is actually in $U(\T)$.
We thus define $\bar \xi_a: \lis{\bar f_ja}_{j=1}^p \to  \bar
ga$ to be $U(\xi_a)$. Let $\phi: \blis{a_1, \dots, a_n} \to b$ in
$\someoperad$. Then the octagon \eqref{DIAGknat} commutes for $F\phi$
and $\xi$. Applying $U$, we get the commutative diagram from
\cite[2.2]{elmendorfmandell}, so $\bar\xi = (\bar\xi_a)$ is a
$p$-natural transformation. \end{proof}

\subsection{Bilinear maps of props} 
Suppose that $\R, \SSS$, and $\T$ are props. A bilinear map $(\R, \SSS) \to
\T$ consists of the following data.
\begin{enumerate}
\item a function $\chi: \col \R \times \col \SSS \to \col \T$
\item for each $\phi: \blis{a_1, \dots, a_n} \to \blis{b_1, \dots, b_m}$ in $\R$
and $c \in \col \SSS$, a morphism
\[ \chi(\phi, c): \blis{\chi(a_1, c), \dots, \chi(a_n,c)} \to \blis{\chi(b_1, c),
\dots, \chi(b_m,c)} \] in $\T$.
\item for each $\psi: \blis{c_1, \dots, c_p} \to \blis{d_1, \dots, d_q}$ in $\SSS$
and $a\in \col \R$, a morphism \[ \chi(a, \psi): \blis{\chi(a, c_1), \dots,
\chi(a,c_p)} \to \blis{\chi(a, d_1), \dots, \chi(a,d_q)} \] in $\T$.
\end{enumerate}
These are required to satisfy the axioms
\begin{enumerate}
\item if $a\in \col \R$ then $\chi(a,-)$ is a map of props $\SSS \to \T$,
\item if $c\in \col \SSS$ then $\chi(-,c)$ is a map of props $\R \to \T$, and
\item the octagon
\begin{equation}\label{bilinear-octagon} \xymatrix{
			&	\lis{\lis{\chi(a_i ,c_j )}_{i=1}^n}_{j=1}^p		\ar@{->}[r]^{\lis{\chi(\phi, c_j)}} \ar@{->}[dl]_\cong	&	\lis{\lis{\chi(b_k ,c_j )}_{k=1}^m}_{j=1}^p		\ar@{->}[dr]^\cong	&				\\
\lis{\lis{\chi(a_i ,c_j )}_{j=1}^p}_{i=1}^n	\ar@{->}[d]_{\lis{\chi(a_i,\psi)}}		&				&				&	\lis{\lis{\chi(b_k ,c_j )}_{j=1}^p}_{k=1}^m		\ar@{->}[d]^{\lis{\chi(b_k,\psi)}}	\\
\lis{\lis{\chi(a_i ,d_\ell )}_{\ell=1}^q}_{i=1}^n		\ar@{->}[dr]_\cong	&				&				&	\lis{\lis{\chi(b_k ,d_\ell )}_{\ell=1}^q}_{k=1}^m	\ar@{->}[dl]^\cong		\\
			&		\lis{\lis{\chi(a_i ,d_\ell )}_{i=1}^n}_{\ell=1}^q	\ar@{->}[r]_{\lis{\chi(\phi,d_\ell)}}	&		\lis{\lis{\chi( b_k, d_\ell)}_{k=1}^m}_{\ell=1}^q		&				\\
}\end{equation}
commutes.
\end{enumerate}

We will write $\bilinue(\R,\SSS; \T)$ for the collection of bilinear
maps. Unravelling the definitions gives natural bijections
\begin{equation} \Hom(\R, \inthom(\SSS, \T)) \cong \bilinue(\R,\SSS; \T)
\cong \Hom(\SSS, \inthom(\R, \T)). \label{E:unenrichedbilinear}
\end{equation}

We would like to show that the collection of bilinear maps is the color
set for a prop $\bilin(\R, \SSS; \T)$. To this end, suppose that we have
a list \[ \chi_1, \dots, \chi_v, \varsigma_1, \dots, \varsigma_w
\] of bilinear maps $(\R, \SSS) \to \T$. A $(v,w)$-morphism \[ \xi:
\blis{\chi_1, \dots, \chi_v} \to \blis{\varsigma_1, \dots, \varsigma_w} \] in
$\bilin(\R, \SSS; \T)$ consists of a choice of $(v,w)$ morphisms \[
\xi_{(a,c)}: \blis{\chi_1(a,c), \dots, \chi_v(a,c)} \to \blis{\varsigma_1(a,c),
\dots, \varsigma_w(a,c)} \] for each $a\in \col(\R)$ and $c\in
\col(\SSS)$. These are subject to two compatibility conditions. The
first is that if we have
\begin{align*}
\phi&\in \R(a_1, \dots, a_n; b_1, \dots, b_m) \\
c&\in \col(\SSS)
\end{align*}
then the octagon
\[ \xymatrix{
			&	\lis{\lis{\chi_j(a_i ,c )}_{i=1}^n}_{j=1}^v		\ar@{->}[r]^{\lis{\chi_j(\phi, c)}} \ar@{->}[dl]_\cong	&	\lis{\lis{\chi_j(b_\ell ,c )}_{\ell=1}^m}_{j=1}^v		\ar@{->}[dr]^\cong	&				\\
\lis{\lis{\chi_j(a_i ,c )}_{j=1}^v}_{i=1}^n	\ar@{->}[d]_{\lis{\xi_{(a_i,c)}}}		&				&				&	\lis{\lis{\chi_j(b_\ell ,c )}_{j=1}^v}_{\ell=1}^m		\ar@{->}[d]^{\lis{\xi_{(b_\ell,c)}}}	\\
\lis{\lis{\varsigma_k(a_i ,c )}_{k=1}^w}_{i=1}^n		\ar@{->}[dr]_\cong	&				&				&	\lis{\lis{\varsigma_k(b_\ell ,c )}_{k=1}^w}_{\ell=1}^m	\ar@{->}[dl]^\cong		\\
			&		\lis{\lis{\varsigma_k(a_i ,c )}_{i=1}^n}_{k=1}^w	\ar@{->}[r]_{\lis{\varsigma_k(\phi,c)}}	&		\lis{\lis{\varsigma_k(b_\ell ,c )}_{\ell=1}^m}_{k=1}^w		&				\\
}\]
commutes. Similarly, if
\begin{align*}
\psi&\in\SSS(c_1, \dots, c_p; d_1, \dots, d_q)  \\
a&\in \col(\R)
\end{align*}
then the octagon
\[ \xymatrix{
			&	\lis{\lis{\chi_j(a ,c_i )}_{i=1}^p}_{j=1}^v		\ar@{->}[r]^{\lis{\chi_j(a, \psi)}} \ar@{->}[dl]_\cong	&	\lis{\lis{\chi_j(a ,d_\ell )}_{\ell=1}^q}_{j=1}^v		\ar@{->}[dr]^\cong	&				\\
\lis{\lis{\chi_j(a ,c_i )}_{j=1}^v}_{i=1}^p	\ar@{->}[d]_{\lis{\xi_{(a,c_i)}}}		&				&				&	\lis{\lis{\chi_j(a ,d_\ell )}_{j=1}^v}_{\ell=1}^q		\ar@{->}[d]^{\lis{\xi_{(a,d_\ell)}}}	\\
\lis{\lis{\varsigma_k(a ,c_i )}_{k=1}^w}_{i=1}^p		\ar@{->}[dr]_\cong	&				&				&	\lis{\lis{\varsigma_k(a ,d_\ell )}_{k=1}^w}_{\ell=1}^q	\ar@{->}[dl]^\cong		\\
			&		\lis{\lis{\varsigma_k(a ,c_i )}_{i=1}^p}_{k=1}^w	\ar@{->}[r]_{\lis{\varsigma_k(a,\psi)}}	&		\lis{\lis{\varsigma_k(a ,d_\ell )}_{\ell=1}^q}_{k=1}^w		&				\\
}\]
commutes.

\begin{prop}\label{P:bilinenriched} With these morphisms $\bilin(\R,
\SSS; \T)$ is a prop and \[ \inthom(\R, \inthom(\SSS, \T)) \cong
\bilin(\R, \SSS; \T) \cong \inthom(\SSS, \inthom(\R,\T)).\] \qed
\end{prop}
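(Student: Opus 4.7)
The plan is to imitate the construction of $\inthom$ in two stages: first endow $\bilin(\R,\SSS;\T)$ with a pointwise prop structure, and then promote the color-level bijection \eqref{E:unenrichedbilinear} to an isomorphism of props.

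For the first stage, I would define all propic operations on a $(v,w)$-morphism $\xi$ by the same formulas as in the proof that $\inthom(\R,\T)$ is a prop, but now indexed by pairs $(a,c)\in\col\R\times\col\SSS$: set $(\xi\circ_v\xi')_{(a,c)}=\xi_{(a,c)}\circ_v\xi'_{(a,c)}$, $(\xi\circ_h\xi')_{(a,c)}=\xi_{(a,c)}\circ_h\xi'_{(a,c)}$, $(\id_\chi)_{(a,c)}=\id_{\chi(a,c)}$, and similarly for the $\Sigma$-actions. The prop axioms hold pointwise because $\T$ is a prop. The only thing to check is that the two octagons defining a $(v,w)$-morphism in $\bilin$ remain satisfied after applying these operations; this follows by juxtaposing the octagons for $\xi$ and $\xi'$ (together with the interchange rule \eqref{E:hvcompat} and the horizontal/vertical compatibility of the $\Sigma$-actions from the basic prop axioms), exactly as in the $\inthom$ case. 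No new ideas are required here.

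For the isomorphisms, I would argue that the forward bijection of \eqref{E:unenrichedbilinear} on colors upgrades directly to an isomorphism on hom-sets. Given a $(v,w)$-morphism $\Xi$ in $\inthom(\R,\inthom(\SSS,\T))$, its component $\Xi_a$ at $a\in\col\R$ is itself a $(v,w)$-morphism of prop maps $\SSS\to\T$, and thus has components $(\Xi_a)_c\in\T$ for $c\in\col\SSS$. Define $\xi_{(a,c)}:=(\Xi_a)_c$. The naturality octagon \eqref{DIAGknat} for $\Xi_a$ with respect to any $\psi:\lis{c_i}\to\lis{d_\ell}$ in $\SSS$ is precisely the second bilinear octagon in the definition of a $(v,w)$-morphism in $\bilin(\R,\SSS;\T)$ (evaluated at $a$). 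Dually, the naturality of $\Xi$ itself with respect to $\phi:\lis{a_i}\to\lis{b_k}$ in $\R$ is an octagon in $\inthom(\SSS,\T)$; since identities, compositions and $\Sigma$-actions in $\inthom(\SSS,\T)$ were defined pointwise, evaluating that octagon at any $c\in\col\SSS$ yields the first bilinear octagon. Thus the assignment $\Xi\mapsto\xi$ is well-defined, bijective (with inverse given by reversing the unpacking), and preserves all propic operations since these too are defined pointwise on both sides. The isomorphism $\bilin(\R,\SSS;\T)\cong\inthom(\SSS,\inthom(\R,\T))$ is obtained by the symmetric argument, or equivalently by observing that the definition of $\bilin$ is manifestly symmetric in $\R$ and $\SSS$ up to the swap octagon \eqref{E:horizswap}.

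The main obstacle is bookkeeping rather than conceptual: one must verify that the single octagon expressing naturality of $\Xi$ in $\inthom(\R,\inthom(\SSS,\T))$, after evaluation at $c$, genuinely coincides with the first bilinear octagon, and similarly that the inner naturality of each $\Xi_a$ exhausts the second bilinear octagon. Both coincidences rely on the fact that the ``block permutation'' isomorphisms labeled ``$\cong$'' in the two octagons are the same interchange permutations from \eqref{E:horizswap}, so no additional coherence needs to be supplied. Once this is noted, the compatibility of the whole identification with $\circ_v$, $\circ_h$, and the $\Sigma$-actions is immediate, and Proposition~\ref{P:generating} is not even needed because both the $\bilin$-conditions and the $\inthom$-conditions range over \emph{all} morphisms of $\R$ (respectively $\SSS$), not merely a generating set.
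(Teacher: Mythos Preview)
Your proposal is correct and is exactly the kind of pointwise unpacking the paper has in mind; in fact the paper gives no proof at all (the proposition is marked \qed\ with no argument), so you are simply supplying the omitted details in the evident way.
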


\subsection{The tensor product of props}
If $X$ is an algebra over two props $\T$ and $\T'$, we might ask for
some sort of compatibility of the actions of $\T$ and $\T'$. We can
interchange the actions in a suitable, reasonable sense if and only if
$X$ is an algebra over another prop $\T \otimes \T'$. We describe this
prop in this section as a universal bilinear target $(\T, \T') \to \T
\otimes \T'$. 

Let $\R$ and $\SSS$ be (small) props and consider the coproducts \[
\coprod_{\col(\R)} \SSS \qquad \& \qquad \coprod_{\col(\SSS)} \R. \]
Maps from $\coprod_{\col(\R)} \SSS \to \T$ can be thought of as
$\R$-parametrized maps $\SSS \to \T$. Each consists of a function $\chi:
\col(\R) \times \col(\SSS) \to \col(\T)$ together with a function
$\col(\R) \to \Hom(\SSS,\T)$ which extends $\chi$'s adjoint $\col(\R)
\to \Hom(\col(\SSS), \col(\T))$. Similarly maps $\coprod_{\col(\SSS)} \R
\to \T$ are the same as $\SSS$-parametrized maps $\R \to \T$.

We observe that by forgetting structure, a bilinear map $(\R,\SSS) \to
\T$ may be thought of as either an $\SSS$-parametrized map or as an
$\R$-parametrized map. In addition, these two parametrized maps share
the same function $\chi:\col(\R) \times \col(\SSS)\to\col(\T)$.

We take the pushout 
\[ \xymatrix{
\col(\R) \times \col (\SSS) \ar@{->}[r] \ar@{->}[d] & \coprod_{\col(\R)} \SSS \ar@{->}[d] \\
\coprod_{\col(\SSS)} \R \ar@{->}[r] & \R \# \SSS
}\]
where the upper left corner is the free prop on this set of colors. Thus
maps from $\R \# \SSS$ are the same thing as a pair of parametrized maps
which share the same $\chi$.

Fix morphisms $\phi: \blis{a_1, \dots, a_n} \to \blis{b_1, \dots, b_m}$ in $\R$
and $\psi: \blis{c_1, \dots, c_p} \to \blis{d_1, \dots, d_q}$ in $\SSS$, and
define megagraphs $\X(\phi,\psi)$ and $\Y(\phi,\psi)$ as follows. We
declare \[ X_0 = Y_0 = (\set{a_1, \dots, a_n} \times \set{c_1, \dots,
c_p}) \cup (\set{b_1, \dots, b_m} \times \set{d_1, \dots, d_q}), \] and
let $X_1$ be a two point set $\set{*_1, *_2}$ and $Y_1 = *$. Both source
functions evaluate to $\lis{\lis{(a_i ,c_j )}_{i=1}^n}_{j=1}^p$ and both
target functions evaluate to $\lis{\lis{( b_k, d_\ell)}_{k=1}^m}$. We
consider the map of megagraphs $\X(\phi, \psi) \to \Y(\phi, \psi)$ which
is the identity map on objects and the map of megagraphs $\X(\phi, \psi)
\to U(\R \# \SSS)$ which takes $*_1$ to the composite
\begin{multline*} \lis{\lis{(a_i ,c_j )}_{i=1}^n}_{j=1}^p		\overset{\lis{(\phi, c_j)}}\longrightarrow \lis{\lis{(b_k ,c_j )}_{k=1}^m}_{j=1}^p \overset\cong\longrightarrow 	\lis{\lis{(b_k ,c_j )}_{j=1}^p}_{k=1}^m	 \\	\overset{\lis{(b_k,\psi)}}\longrightarrow	
\lis{\lis{(b_k ,d_\ell )}_{\ell=1}^q}_{k=1}^m	
\overset\cong\longrightarrow \lis{\lis{( b_k, d_\ell)}_{k=1}^m}_{\ell=1}^q
\end{multline*}
and $*_2$ to the composite
\begin{multline*}
\lis{\lis{(a_i ,c_j )}_{i=1}^n}_{j=1}^p \overset\cong\longrightarrow \lis{\lis{(a_i ,c_j )}_{j=1}^p}_{i=1}^n \overset{\lis{(a_i,\psi)}}\longrightarrow \lis{\lis{(a_i ,d_\ell )}_{\ell=1}^q}_{i=1}^n \\
\overset\cong\longrightarrow \lis{\lis{(a_i ,d_\ell )}_{i=1}^n}_{\ell=1}^q	\overset{\lis{(\phi,d_\ell)}}\longrightarrow \lis{\lis{( b_k, d_\ell)}_{k=1}^m}_{\ell=1}^q.
\end{multline*}
These are the two paths around the octagon \eqref{bilinear-octagon} in the definition of bilinear map. 

We define $\R \otimes \SSS$ to be the pushout
\[ \xymatrix{
\coprod_{(\phi, \psi)} F\X(\phi,\psi) \ar@{->}[r] \ar@{->}[d] & \R \# \SSS \ar@{->}[d] \\
\coprod_{(\phi, \psi)} F\Y(\phi,\psi) \ar@{->}[r] & \R \otimes \SSS }
\]
where $F\X(\phi, \psi) \to \R \# \SSS$ is the adjoint of the previously
defined map.

The next theorem follows from the construction of $\R \otimes \SSS$.
\begin{thm} If $\R$ and $\SSS$ are props then there is a bilinear map
$(\R, \SSS) \to \R \otimes \SSS$ which is universal among bilinear maps
from $(\R, \SSS)$. In other words, this map induces a natural
isomorphism \[ \bilinue(\R,\SSS; \T) \cong \Hom(\R \otimes \SSS, \T). \]
\end{thm}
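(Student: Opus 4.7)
The plan is to verify the universal property by unpacking the pushout construction and using the adjunction between $F$ and $U$ of Theorem~\ref{T:freeprop}. First, I would construct the universal bilinear map $(\R, \SSS) \to \R \otimes \SSS$. The composite $\R \to \R \# \SSS \to \R \otimes \SSS$ (and similarly from $\SSS$) gives the structure maps, so on colors we use $\col(\R) \times \col(\SSS) \to \col(\R \# \SSS) \to \col(\R \otimes \SSS)$, and for a morphism $\phi$ in $\R$ and $c \in \col(\SSS)$ we use the component $\R \to \coprod_{\col(\SSS)} \R \to \R \# \SSS \to \R \otimes \SSS$ indexed by $c$. Bilinearity of this assignment amounts to showing the octagon \eqref{bilinear-octagon} commutes in $\R \otimes \SSS$; but this is precisely forced by the right-hand pushout square, since in $F\Y(\phi,\psi)$ the two generators $*_1, *_2$ of $F\X(\phi,\psi)$ become identified, and they map to the two legs around the octagon by construction.

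Next, given any bilinear map $\chi \colon (\R, \SSS) \to \T$, I would produce a unique prop homomorphism $\tilde\chi \colon \R \otimes \SSS \to \T$ extending $\chi$. Viewing $\chi$ as a pair of parametrized maps sharing the same color function (as discussed immediately before the construction), we obtain a map $\R \# \SSS \to \T$ by the universal property of that pushout. To extend this to $\R \otimes \SSS$, I need to check that the two adjoint maps $F\X(\phi,\psi) \to \R \# \SSS \to \T$ and $F\Y(\phi,\psi) \to \R \otimes \SSS \dashrightarrow \T$ agree; equivalently, by the $F \dashv U$ adjunction, that the two megagraph maps $\X(\phi,\psi) \to U(\T)$ obtained by postcomposition agree. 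But these are just the two octagonal paths in $\T$, which coincide by the bilinearity of $\chi$. Hence the pushout property produces a unique $\tilde\chi$.

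For naturality, I would observe that both constructions — the assignment of a bilinear map to its induced prop homomorphism, and the pullback of bilinearity along prop maps out of $\T$ — are plainly natural in $\T$. Combined with the already-established bijection $\Hom(\R \# \SSS, \T) \cong \{\text{pairs of parametrized maps sharing }\chi\}$, this yields the required natural isomorphism $\bilinue(\R, \SSS; \T) \cong \Hom(\R \otimes \SSS, \T)$.

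The main obstacle I anticipate is bookkeeping: translating faithfully between the pushout data (generators $*_1, *_2$ of the free prop $F\X(\phi,\psi)$ and the single generator of $F\Y(\phi,\psi)$) and the octagon \eqref{bilinear-octagon}, in particular verifying that the two composites spelled out just before the definition of $\R \otimes \SSS$ really do name the two paths of the octagon once interpreted in $\R \# \SSS$ via the structure morphisms. Once this identification is made precise, the universal property follows essentially formally from the pushout diagram together with the $F \dashv U$ adjunction.
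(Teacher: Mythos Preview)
Your proposal is correct and is essentially the same approach as the paper's: the paper simply asserts that the theorem ``follows from the construction of $\R \otimes \SSS$,'' and what you have written is a faithful unpacking of that claim via the two pushout squares and the $F\dashv U$ adjunction. One small phrasing issue: when you write that you must check that the maps $F\X(\phi,\psi) \to \R \# \SSS \to \T$ and $F\Y(\phi,\psi) \to \R \otimes \SSS \dashrightarrow \T$ agree, the second map does not yet exist---the correct formulation (which you immediately give) is that the composite $F\X(\phi,\psi) \to \R\#\SSS \to \T$ must factor through $F\Y(\phi,\psi)$, i.e.\ that $*_1$ and $*_2$ have the same image in $U(\T)$.
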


An easy consequence of this and \eqref{E:unenrichedbilinear} is that
\begin{equation} \Hom(\R \otimes \SSS, \T) \cong \Hom(\R, \inthom(\SSS,
\T)) \cong \Hom(\SSS, \inthom(\R, \T)). \label{E:unenrichedhomtensor}
\end{equation}

Let $\chi: (\R, \SSS) \to \R \otimes \SSS$ be the universal bilinear
map. For $\phi$ a morphism in $\R$ and $c$ in $\col(\SSS)$, we write
$\phi \otimes c = \chi(\phi, c)$. Similarly, $a \otimes \psi =
\chi(a,\psi)$ for $a\in \col(\R)$ and $\psi$ a morphism of $\SSS$.

\begin{prop}\label{P:tensorgenerated} The set of morphisms \[ S =
\setm{\phi\otimes c, a \otimes \psi}{a\in \col(\R), c\in \col(\SSS),
\phi \in \R, \text{ and } \psi \in \SSS} \] generate $\R\otimes \SSS$.
Moreover, the set of $(p,q)$-natural transformations $\xi$ in
$\inthom(\R\otimes \SSS, \T)$ are precisely the set of those $\xi$ which
are natural with respect to $S$ (as in definition~\ref{D:naturalwrt}).
\end{prop}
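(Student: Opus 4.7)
The statement has two claims, and the second follows immediately from the first via Proposition~\ref{P:generating}, so the bulk of the work is to show that $S$ generates $\R\otimes\SSS$.

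The plan is to trace through the iterated pushout construction of $\R\otimes\SSS$. First, I would argue that $\coprod_{\col(\R)}\SSS$ is generated as a prop by the union of the images of its inclusion maps $\SSS\hookrightarrow\coprod_{\col(\R)}\SSS$ indexed by $a\in\col(\R)$: by the universal property of the coproduct, any subprop containing every such image must receive each inclusion and therefore equal the entire coproduct. The image of the summand indexed by $a$ is precisely the set of morphisms $a\otimes\psi$ for $\psi\in\SSS$. Symmetrically, $\coprod_{\col(\SSS)}\R$ is generated by the morphisms $\phi\otimes c$. The two maps from the free prop $F(\col(\R)\times\col(\SSS))$ on the color set factor through identities and their horizontal concatenations, so the pushout $\R\#\SSS$ is generated by the union of the two collections, namely $S$. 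Finally, $\R\otimes\SSS$ is obtained by a further pushout that imposes the octagon relations; since pushouts of props only impose relations and do not introduce new generators, the images of elements of $S$ still generate $\R\otimes\SSS$.

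With the generation claim in hand, the second assertion is immediate from Proposition~\ref{P:generating}: if $\xi$ is natural with respect to $S$, then it is natural with respect to the subprop $\lis{S}=\R\otimes\SSS$, and is therefore a $(p,q)$-natural transformation. The converse direction is trivial, since any natural transformation is natural with respect to every subset of morphisms. The only delicate step I foresee is the justification that a coproduct of props is generated by its component inclusions; this is a standard consequence of the universal property (a retract argument shows the inclusion of the subprop generated by the components into the coproduct is surjective), but could alternatively be verified by unpacking the explicit free prop construction of the appendix, in which every morphism of a pushout or coproduct is built from component generators via the prop operations.
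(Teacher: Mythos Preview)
Your argument is correct, but it takes a noticeably longer route than the paper's. The paper dispatches the generation claim in a single line: the subprop $\lis{S}$ already receives a bilinear map from $(\R,\SSS)$ (since it contains every $\phi\otimes c$ and $a\otimes\psi$, the maps $\chi(a,-)$ and $\chi(-,c)$ land in $\lis{S}$ and the octagon holds there because it holds in $\R\otimes\SSS$), so the universal property of $\R\otimes\SSS$ produces a retraction onto $\lis{S}$, forcing $\lis{S}=\R\otimes\SSS$. In other words, the paper applies the retract-via-universal-property trick once, directly to the defining universal property of the tensor product.

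Your approach instead applies that same trick repeatedly to the pieces of the construction: once for each coproduct, once for $\R\#\SSS$, and implicitly once more for the final pushout. This is perfectly valid and has the merit of being more transparent about \emph{why} the generators suffice at each stage. One point deserves a little more care: the assertion that the second pushout ``only imposes relations and does not introduce new generators'' is not a general fact about pushouts of props, but it holds here because the left vertical map $\coprod F\X(\phi,\psi)\to\coprod F\Y(\phi,\psi)$ is surjective (it is the identity on colors and collapses two generating edges to one), so the same retract argument you used for coproducts shows $\R\#\SSS\to\R\otimes\SSS$ is surjective. Once that is made explicit, your proof is complete; the paper's version simply shortcuts all of this by invoking the top-level universal property.
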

\begin{proof} The subprop generated by $S$ is universal for bilinear
maps. The second statement is Proposition~\ref{P:generating}. \end{proof}

\begin{prop}\label{P:enrichedhomtensor} There is a natural isomorphism
of props \[ \inthom(\R \otimes \SSS, \T) \cong \inthom(\R, \inthom(\SSS,
\T)) \] whose restriction to color sets is
\eqref{E:unenrichedhomtensor}. \end{prop}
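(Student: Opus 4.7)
The plan is to route the desired isomorphism through $\bilin(\R, \SSS; \T)$, invoking Proposition~\ref{P:bilinenriched} at the end. I will construct a natural isomorphism of props $\inthom(\R \otimes \SSS, \T) \cong \bilin(\R, \SSS; \T)$; composing with the isomorphism $\bilin(\R, \SSS; \T) \cong \inthom(\R, \inthom(\SSS, \T))$ of Proposition~\ref{P:bilinenriched} then yields the claim. On colors, the relevant bijection is \eqref{E:unenrichedhomtensor}: prop maps $\R \otimes \SSS \to \T$ correspond to bilinear maps $(\R, \SSS) \to \T$, and bilinear maps are by definition the colors of $\bilin(\R, \SSS; \T)$.

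The substantive step is to describe the correspondence on morphisms. Given a $(v,w)$-natural transformation $\xi$ in $\inthom(\R \otimes \SSS, \T)$, with components $\xi_{(a,c)}$ indexed by $(a,c) \in \col(\R \otimes \SSS) = \col(\R) \times \col(\SSS)$, I would send it to the same family, now viewed as a candidate morphism in $\bilin(\R, \SSS; \T)$. The key point, and the one that requires Proposition~\ref{P:tensorgenerated}, is that $\xi$ is natural with respect to all morphisms of $\R \otimes \SSS$ if and only if it is natural with respect to the generating class $\{\phi \otimes c\} \cup \{a \otimes \psi\}$. Unwinding definitions, the octagon \eqref{DIAGknat} for $\xi$ at $\phi \otimes c = \chi(\phi, c)$ is precisely the first compatibility octagon in the definition of a morphism of $\bilin(\R, \SSS; \T)$, and the octagon at $a \otimes \psi = \chi(a, \psi)$ is the second. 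Thus the two families of naturality conditions match literally, and a bijection on morphism sets is established.

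With morphisms matched, the remaining verifications are essentially automatic: vertical and horizontal composition, identities, and the left and right $\Sigma$-actions on both sides are defined pointwise at each $(a,c)$, so the correspondence is a prop homomorphism. Naturality of the isomorphism in each of $\R$, $\SSS$, $\T$ is equally immediate, as is the check that the color map obtained by composing the two isomorphisms agrees with \eqref{E:unenrichedhomtensor}.

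The main obstacle, insofar as there is one, is the morphism-level identification in the previous paragraph: one must compare the single octagon \eqref{DIAGknat} for a generator of $\R \otimes \SSS$ with the two octagons defining morphisms of $\bilin$. Because Proposition~\ref{P:tensorgenerated} has already isolated exactly the two classes of generators that parameterize these two octagons, this reduces to unwinding the definition of $\chi$ on morphisms rather than to any real combinatorial work.
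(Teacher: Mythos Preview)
Your proposal is correct and follows exactly the same route as the paper: factor through $\bilin(\R,\SSS;\T)$, invoke Proposition~\ref{P:bilinenriched} for the right-hand isomorphism, and use Proposition~\ref{P:tensorgenerated} to identify morphisms of $\inthom(\R\otimes\SSS,\T)$ with morphisms of $\bilin(\R,\SSS;\T)$. You have simply spelled out in more detail how the naturality octagons at the generators $\phi\otimes c$ and $a\otimes\psi$ match the two defining octagons for morphisms of $\bilin$, which the paper leaves implicit in its one-line citation of Proposition~\ref{P:tensorgenerated}.
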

\begin{proof} There are isomorphisms
\[  \inthom(\R \otimes \SSS, \T) \cong \bilin(\R,\SSS; \T) 
\cong \inthom(\R, \inthom(\SSS, \T)) \]
given by Propositions~\ref{P:tensorgenerated} and \ref{P:bilinenriched}, respectively.
\end{proof}

\begin{thm}\label{T:csm}
The tensor product $\otimes$ makes $\Prop$ a closed symmetric monoidal category.
\end{thm}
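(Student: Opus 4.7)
My plan is to use the universal properties of $\otimes$ and $\inthom$ established above to produce the entire symmetric monoidal structure, and then to verify the coherence axioms via the Yoneda lemma. The closed part of the theorem is already in hand: the adjunction $\Hom(\R \otimes \SSS, \T) \cong \Hom(\R, \inthom(\SSS, \T))$ from \eqref{E:unenrichedhomtensor}, together with its enriched version in Proposition~\ref{P:enrichedhomtensor}, is precisely the assertion that $\inthom$ is the internal hom for $\otimes$.

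For the unit I would take $I$ to be the free prop on a single color $*$, whose only operations are the horizontal products $\id_* \circ_h \cdots \circ_h \id_*$ equipped with their symmetric group actions. A prop map $I \to \T$ amounts to a choice of color, and unwinding the definition of bilinear map shows that $\bilinue(I, \R; \T) \cong \Hom(\R, \T)$: in a bilinear map $(I, \R) \to \T$, the morphisms $\chi(\phi, c)$ for $\phi \in I$ are forced to be the appropriate composites of identities and permutations, and the octagon \eqref{bilinear-octagon} is then automatic. Yoneda then produces $I \otimes \R \cong \R$, and $\R \otimes I \cong \R$ is symmetric. The braiding comes from the manifest symmetry in the definition of a bilinear map: swapping the two arguments and transposing \eqref{bilinear-octagon} yields a natural bijection $\bilinue(\R, \SSS; \T) \cong \bilinue(\SSS, \R; \T)$, whence Yoneda gives $\R \otimes \SSS \cong \SSS \otimes \R$.

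For the associator I would iterate \eqref{E:unenrichedhomtensor} and Proposition~\ref{P:enrichedhomtensor} to obtain a natural isomorphism
\[
\Hom((\R \otimes \SSS) \otimes \T, \mathcal{U}) \cong \Hom(\R, \inthom(\SSS, \inthom(\T, \mathcal{U}))) \cong \Hom(\R \otimes (\SSS \otimes \T), \mathcal{U}),
\]
and then invoke Yoneda to extract $(\R \otimes \SSS) \otimes \T \cong \R \otimes (\SSS \otimes \T)$.

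The hard part will be verifying the pentagon, triangle, hexagons, and $\gamma^2 = \id$ coherence axioms. The strategy is to translate each coherence diagram into an equality of natural isomorphisms between representable functors on $\Prop$ and then to invoke Yoneda to reduce it to an equality at the representing object. All of the constituent isomorphisms above are induced by tautological reparameterizations of iterated (multi)linear maps, and on the set-theoretic level these reparameterizations form a strictly associative, strictly unital, and strictly symmetric family. Consequently, tracking the two sides of each coherence diagram through the representing isomorphisms yields the same permutation of the arguments of a multilinear map, so the diagrams commute. This is a routine but tedious bookkeeping exercise, analogous to Mac Lane's classical coherence arguments for symmetric monoidal categories.
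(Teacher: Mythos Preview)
Your approach is essentially the same as the paper's: both derive symmetry from the evident symmetry of bilinear maps, take the unit to be the initial monochrome prop, obtain the associator by iterating \eqref{E:unenrichedhomtensor} and Proposition~\ref{P:enrichedhomtensor} and applying Yoneda, and treat the coherence axioms as a routine exercise (the paper defers to the analogous argument for operads in \cite[\S4]{elmendorfmandell}, which is exactly the Mac~Lane-style bookkeeping you describe). Your write-up is slightly more explicit about the unit and braiding, but there is no substantive difference.
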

\begin{proof} Symmetry is clear from construction, the unit axioms are
obvious, and the fact that $-\otimes \SSS$ is adjoint to $\inthom(\SSS,
-)$ is \eqref{E:unenrichedhomtensor}. Here the unit is the monochrome
prop and exactly one morphism. As in \cite{elmendorfmandell}, the
associativity isomorphisms come from the isomorphisms
\begin{align*}
\Hom((\R\otimes \SSS) \otimes \T, \mathcal{U}) &\underset{\eqref{E:unenrichedhomtensor}}\cong \Hom(\R \otimes \SSS, \inthom(\T, \mathcal{U})) \\
&\underset{\eqref{E:unenrichedhomtensor}}\cong \Hom(\R, \inthom(\SSS, \inthom(\T, \mathcal{U}))) \\
&\,\,\underset{\ref{P:enrichedhomtensor}}\cong \Hom(\R, \inthom(\SSS \otimes \T, \mathcal{U})) \\
&\underset{\eqref{E:unenrichedhomtensor}}\cong \Hom(\R \otimes(\SSS \otimes \T), \mathcal{U})
\end{align*}
via the Yoneda lemma. The pentagon relation for this associativity
isomorphism follows using the same diagrams one must draw to see the
analogous result in the case of operads in \cite[\S 4]{elmendorfmandell}.
\end{proof}

A natural question is whether this tensor product is `compatible' with
the usual Boardman-Vogt tensor product on operads. We have the following:
\begin{prop}
If $\someoperad, \mathcal{P} \in \Operad$, then
\[ F(\someoperad \otimes_{BV} \mathcal{P}) \cong F(\someoperad) \otimes F(\mathcal{P}). \]
\end{prop}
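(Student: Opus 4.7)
The plan is to apply the Yoneda lemma after establishing a chain of natural isomorphisms on corepresented functors. More precisely, I would show that for any prop $\T$, there is a natural isomorphism
\[ \Hom_{\Prop}(F(\someoperad)\otimes F(\mathcal{P}),\T) \cong \Hom_{\Prop}(F(\someoperad\otimes_{BV}\mathcal{P}),\T). \]

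First I would unwind the left-hand side using the closed symmetric monoidal structure on $\Prop$ established in Theorem~\ref{T:csm}, giving
\[ \Hom_{\Prop}(F(\someoperad)\otimes F(\mathcal{P}),\T) \cong \Hom_{\Prop}(F(\someoperad), \inthom(F(\mathcal{P}),\T)). \]
Applying the adjunction $F\dashv U$ of Proposition~\ref{P:operadpropadjunction} converts this to
\[ \Hom_{\Operad}(\someoperad, U\inthom(F(\mathcal{P}),\T)), \]
and then Proposition~\ref{P:operadadjunction} identifies the right variable with $\inthom_{\Operad}(\mathcal{P}, U(\T))$, yielding
\[ \Hom_{\Operad}(\someoperad, \inthom_{\Operad}(\mathcal{P}, U(\T))). \]

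Next I would invoke the defining property of the Boardman--Vogt tensor product, namely that $\otimes_{BV}$ is left adjoint to $\inthom_{\Operad}$ in the analogous closed symmetric monoidal structure on $\Operad$ (this is the operadic counterpart of Theorem~\ref{T:csm}; see e.g.~\cite[\S 4]{elmendorfmandell}). This gives
\[ \Hom_{\Operad}(\someoperad, \inthom_{\Operad}(\mathcal{P}, U(\T))) \cong \Hom_{\Operad}(\someoperad \otimes_{BV} \mathcal{P}, U(\T)), \]
and a final application of $F\dashv U$ reaches $\Hom_{\Prop}(F(\someoperad\otimes_{BV}\mathcal{P}),\T)$. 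Since every isomorphism above is natural in $\T$, the Yoneda lemma delivers the desired isomorphism of props.

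The only step that requires any care is the invocation of the operadic closed monoidal adjunction: one must verify that the Boardman--Vogt tensor product really is characterized by the internal-hom adjunction we are using (rather than by its classical description via generators and relations). This is standard but worth stating explicitly, and it is where the argument rests most heavily on cited material; all remaining manipulations are formal consequences of adjunctions already set up in the paper. Notice that this approach never requires us to analyze either free prop explicitly, which would be the main obstacle if one tried a direct combinatorial comparison of $F(\someoperad\otimes_{BV}\mathcal{P})$ and $F(\someoperad)\otimes F(\mathcal{P})$.
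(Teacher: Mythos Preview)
Your proposal is correct and is essentially the same argument as the paper's: both establish the isomorphism via Yoneda by chaining together the $F\dashv U$ adjunction, the closed monoidal structures on $\Prop$ and $\Operad$, and Proposition~\ref{P:operadadjunction}. The only cosmetic difference is that you run the chain starting from $F(\someoperad)\otimes F(\mathcal{P})$ while the paper starts from $F(\someoperad\otimes_{BV}\mathcal{P})$, and you are more explicit than the paper about the need to invoke the operadic hom--tensor adjunction for $\otimes_{BV}$.
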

\begin{proof}
This is a straightforward computation using several natural isomorphisms.
We have, for any prop $\T$,
\begin{align*} \Hom_{\Prop} (F(\someoperad \otimes_{BV} \mathcal{P}), \T) &= 
\Hom_{\Operad} (\someoperad \otimes_{BV} \mathcal{P}, U(\T)) \\
&= \Hom_{\Operad} (\someoperad, \inthom_{\Operad}(\mathcal{P}, U(\T)).
\end{align*}
By Proposition~\ref{P:operadadjunction}, we know that
$\inthom_{\Operad}(\mathcal{P}, U(\T))$ is isomorphic to
$U(\inthom(F(\mathcal{P}), \T))$, so
\begin{align*} &\Hom_{\Operad} (\someoperad, \inthom_{\Operad}(\mathcal{P}, U(\T))) \\
=& \Hom_{\Prop}(\someoperad, U(\inthom(F(\mathcal{P}), \T))) \\
=& \Hom_{\Prop}(F(\someoperad), \inthom(F(\mathcal{P}), \T)) \\
=&\Hom_{\Prop} (F(\someoperad) \otimes F(\mathcal{P}), \T).
\end{align*}
\end{proof}

\appendix

\section{The free prop on a megagraph}\label{S:freeprop}

Fix a symmetric megagraph $\X$. Let $G=(E,V,s,t)$ be a graph.
\begin{defn} A \emph{decoration of $G$ by $\X$} consists of the following data:
\begin{itemize}
\item a function $D_0: E\to X_0$,
\item a function $D_1: V\to X_1$,
\item for each vertex $v \in V$, an ordering on the input 
edges $\tin(v)$ and the output edges $\tout(v)$;
\item an ordering on both $\tin(G)$ and on $\tout(G)$,
\end{itemize} 
which are subject to the compatibility conditions
\[ \xymatrix{
V \ar@{->}[r]^{D_1} \ar@{->}[d]_{in } & X_1 \ar@{->}[d]^s &
V \ar@{->}[r]^{D_1} \ar@{->}[d]_{out } & X_1 \ar@{->}[d]^t \\
\M E \ar@{->}[r]_{\M D_0} & \M X_0 & 
\M E \ar@{->}[r]_{\M D_0} & \M X_0. & 
}\]
\end{defn}

The reader will notice that by specifying that there be an ordering on $in  (v)$ (respectively, an ordering on: $out  (v)$,
$\tin(G)$, or $\tout(G)$) we can consider the set $\tin(v)$ (respectively, $\tout(v)$, $\tin(G)$ or $\tout(G)$)  as an element of
$\M E$. We will use this fact frequently below.

We also remark that in our definition of decoration, the symmetric megagraph $\X$ is fixed. As such, we will usually refer to a
decoration of $G$ by $\X$ as a \emph{decoration of $G$}, or sometimes just a \emph{decoration}. We will denote decorations by variants of
`$\mathfrak{g}$'.

Let us choose a single graph from each isomorphism class and let $\widetilde{\Gamma}$ denote the set of all decorations of all chosen graphs by the symmetric megagraph $\X$.
We define an equivalence relation on $\widetilde{\Gamma}$ using graph automorphisms as follows. Let  $G$ be a graph, let $\mathfrak{g}$, $\widetilde{\mathfrak{g}}$ be two decorations of $G$, and let $f: G\to G$ be a graph automorphism. We say that $f$ \emph{relates} $\mathfrak{g}$ and $\widetilde{\mathfrak{g}}$ if:

\begin{enumerate}
\item $\M f_0(\tin(G)) = \widetilde{in}(G)$;
\item $\M f_0(\tout(G)) = \widetilde{out}(G)$;
\item if both
	\begin{enumerate}
	\item $f_0(\tout(v)) = \sigma \cdot \widetilde{out} (f_1(v))$ and 
	\item $f_0(\tin(v)) = \widetilde{in} (f_1(v)) \cdot \tau$,
	\end{enumerate} 
	then $D_1(v) = \sigma \cdot \widetilde{D}_1(f_1(v)) \cdot \tau$.
\end{enumerate}

The identity automorphism $\id:G\to G$ produces two special relations that we will employ frequently in what follows.

\begin{description} 
\item[Interior permutations] If $I=\tin(v) \cap \tout(v')$ and
$\gamma \in \Sigma_I$ (considered as a subgroup of both
$\Sigma_{\tin(v)}$ and $\Sigma_{\tout(v')}$), then a
decoration $\mathfrak{g}$ is related to a modified decoration
$\widetilde{\mathfrak{g}}$ where the only changes are
\begin{itemize} 
\item $\widetilde{D}_1(v) = (D_1 v) \cdot \gamma$ and $in  (v)$ is
replaced by $\tin(v) \cdot \gamma$
\item $\widetilde{D}_1(v') = \gamma^\inv \cdot (D_1 v')$ and $out 
(v')$ is replaced by $\gamma^\inv \cdot \tout(v')$
\end{itemize}

\item[Exterior permutations] Let $\mathfrak{g}$ be a decoration and
$I\ci \tin(G) \cap \tin(v)$ be a subset with the induced ordering from
$\tin(v)$. 
Let $\gamma$ be such that $I\cdot \gamma^\inv \ci \tin(G)$ is an
ordered inclusion. 
Then $\mathfrak{g}$ is related to a decoration
$\widetilde{\mathfrak{g}}$ where the only changes are
$\widetilde{D}_1(v)\cdot \gamma = D_1(v)$ and $\widetilde{in}(v)  \cdot \gamma =
\tin(v) $. 
A similar relation holds if one considers outputs and left actions.

\end{description}

We will denote by $\Gamma$ the quotient of the set $\widetilde{\Gamma}$ by the relation which is generated by all graph automorphisms. The set $\Gamma$ is the morphism set for the free prop $F(\X)$.  The decorations of graphs are obtained by formally composing elements of $X_1$. 

To elaborate, notice that we have an inclusion \[ X_1
\hookrightarrow \Gamma.\] Suppose that $x\in X_1$, $s(x) =
\lis{a_i}_{i=1}^n$, and $t(x) = \lis{b_k}_{k=1}^m$. The $n,m$ corolla
$C_{n,m}$ is the graph with one vertex which has $n$ incoming and $m$
outgoing edges. Choose an ordering for the incoming edges $e_1, \dots,
e_n$ and an ordering on the outgoing edges $e_{n+1}, \dots, e_{n+m}$.
Let $\mathfrak{g}(x)$ be the decoration on $C_{n,m}$ with $D_1(v) = x$,
$in  (v) = \lis{e_i}_{i=1}^n = \tin(C_{n,m})$, and $\tout(v) =
\lis{e_i}_{i=n+1}^{n+m} = \tout(C_{n,m})$. For this to be a decoration we must assign
\[ D_0(e_i) = \begin{cases}
a_i & i\in [1,n] \\
b_{i-n} & i\in [n+1, n+m]. 
\end{cases} \]

We remark that any other choice of order amounts to a graph automorphism
given by permuting the edges. 
The reader should also note that every decoration of the corolla $C_{n,m}$ is related to $\mathfrak{g}(x)$ for some $x$.

\subsection{Definition of prop structure on the collection \texorpdfstring{$\Gamma$}{\unichar{915}}}

We will now describe a prop $F(\X)$ whose set of morphisms is $\Gamma$ and whose color set $\col(F(\X))$ is  $X_0$. Let $\lis{a_i}_{i=1}^n$ and
$\lis{b_k}_{k=1}^m$ be lists of elements of $X_0$ and then define \[
F(\X)(\lis{a_i}_{i=1}^n; \lis{b_k}_{k=1}^m) \]  to be the set of all
equivalence classes of decorations such that \begin{align*} \M D_0 (\tin(G)) &= 
\lis{a_i}_{i=1}^n & \text{and} \\ \M D_0 (\tout(G)) &=
\lis{b_k}_{k=1}^m.\end{align*}

For each $c\in\col(F(\X)) = X_0$,  we will define the identity elements  $\id_c$, as decorations of the graph with
$E=\set{*}$ and $V=\emptyset$.

Vertical composition \begin{multline*} F(\X)(\lis{a_i}_{i=1}^n; \lis{b_k}_{k=1}^m) \times F(\X)(\lis{c_j}_{j=1}^p; \lis{a_i}_{i=1}^n) \\ \to F(\X)(\lis{c_j}_{j=1}^p ; \lis{b_k}_{k=1}^m) \end{multline*} is defined as follows. 
Let $\mathfrak{g^1}, \mathfrak{g^2}$ be a pair of decorations we wish to compose. 
We construct a graph $H$ with vertex set $V^1 \sqcup V^2$. If $\tin(G^1) = (e_1^1, \dots, e_n^1)$ and $\tout(G^2) = (e_1^2, \dots, e_n^2)$, then the edge set of $H$ is $(E^1 \sqcup E^2) /\sim$ where the $\sim$ is given by $e_i^1 \sim
e_i^2$. 
Before we had $t^2(e_i^2) = *$ and $s^1(e_i^1)=*$, but in the new graph $H$ we define $t^{1+2}(e_i^2) := t^1(e_i^1)$ and $s^{1+2}(e_i^1) := s^2(e_i^2)$.  
Let $G^{1+2}$ be the previously chosen representative of the isomorphism class of $H$, which will be the underlying graph of the composition $\mathfrak{g^1} \circ_v \mathfrak{g^2} =\mathfrak{g^{1+2}}$.

Now that we have defined the graph which is to be decorated, we can define the decoration. First, we set $D_1^{1+2} = D_1^1 \sqcup D_1^2$. The function $D_0^1 \sqcup D_0^2$ induces a function $D_0^{1+2}: E^{1+2} \to X_0$ since $\M D_0 (\tin(G^1))= \M D_0 (\tout(G^2))$. The ordering on the input and output edges of vertices are the same as those from $\mathfrak{g^1}$ and $\mathfrak{g^2}$: if, say, $v \in V_1$ and $\tin(v) = (a_1, \dots, a_\ell)$ in $\mathfrak{g^1}$, then in $\mathfrak{g^{1+2}}$ we have $\tin(v) = ([a_1], \dots, [a_\ell])$ where $[-]$ denotes the equivalence relation on $E^1 \sqcup E^2$. We define the order of $\tin(G^{1+2})$ to be the same as the order on $\tin(G^2)$ and the order of $\tout(G^{1+2})$ to be the same as the order on $\tout(G^1)$. 

We observe that an automorphism of $G^1$ or $G^2$ induces an automorphism of $G^{1+2}$, and thus we see that the definition of $\mathfrak{g^1} \circ_v \mathfrak{g^2}$ does not depend on
the choice of representatives in $\widetilde{\Gamma}$, but only the equivalence classes in $\Gamma$.

\begin{prop}
The vertical composition defined in the previous paragraph is associative.
\end{prop}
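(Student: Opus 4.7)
The plan is to show that both parenthesizations of a triple vertical composition produce representatives of the same equivalence class in $\Gamma$. I will do this by introducing an auxiliary ``triple composite'' decoration and identifying each of the two iterated compositions with it up to a graph automorphism that relates the two decorations.

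First, given three vertically composable decorations $\mathfrak{g}^1, \mathfrak{g}^2, \mathfrak{g}^3$, I construct an auxiliary graph $H$ with vertex set $V^1 \sqcup V^2 \sqcup V^3$ and edge set $(E^1 \sqcup E^2 \sqcup E^3)/\!\sim$, where $\sim$ identifies the $i$-th element of $\tin(G^{\ell})$ with the $i$-th element of $\tout(G^{\ell+1})$ for $\ell = 1, 2$. Source and target maps are inherited exactly as in the binary case. Decorate $H$ by the disjoint union $D_1^1 \sqcup D_1^2 \sqcup D_1^3$ on vertices, by $D_0^1 \sqcup D_0^2 \sqcup D_0^3$ on edges (which descends to the quotient because the matching conditions equalize colors on identified edges), with the vertex-local orderings on $\tin(v)$ and $\tout(v)$ inherited from whichever $\mathfrak{g}^\ell$ contains $v$, and with global orderings $\tin(H) := \tin(G^3)$ and $\tout(H) := \tout(G^1)$.

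Second, I unwind each parenthesization. For $(\mathfrak{g}^1 \circ_v \mathfrak{g}^2) \circ_v \mathfrak{g}^3$, the inner composition produces a representative of the graph with vertex set $V^1 \sqcup V^2$ and edge set $(E^1 \sqcup E^2)/\!\sim_{12}$, whose $\tin$ equals $\tin(G^2)$ and whose $\tout$ equals $\tout(G^1)$, and whose vertex decoration is $D_1^1 \sqcup D_1^2$ with vertex-local orderings inherited from the respective $\mathfrak{g}^\ell$. Composing with $\mathfrak{g}^3$ then quotients this further by identifying $\tin(G^2)$ with $\tout(G^3)$. The iterated quotient is canonically isomorphic to the single quotient defining $H$, and the decoration data is traced through unchanged. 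The other parenthesization yields the same identification by symmetry.

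Finally, because vertical composition selects a chosen representative from each isomorphism class, the literal underlying graphs of the two sides need not equal $H$; but each is isomorphic to $H$ via a graph isomorphism respecting orderings and decorations. Composing these two isomorphisms gives a graph automorphism of the chosen representative for the ambient isomorphism class which relates the two decorations in the sense defined on $\widetilde{\Gamma}$. Hence the two triple compositions coincide in $\Gamma$. The main obstacle is purely bookkeeping: verifying that vertex-local orderings, global orderings, and edge colors are threaded through two different quotient procedures in the same way. This verification is routine once one observes that at every stage each vertex retains the orderings from its original $\mathfrak{g}^\ell$, and the global input and output orderings of any binary composite are taken, respectively, from the ``lower'' and ``upper'' input factor, so both parenthesizations ultimately pick up $\tin(G^3)$ and $\tout(G^1)$.
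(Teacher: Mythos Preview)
Your proposal is correct and follows essentially the same approach as the paper: both arguments check that the two parenthesizations yield the same underlying graph (vertex set $V^1\sqcup V^2\sqcup V^3$, edge set the double quotient of $E^1\sqcup E^2\sqcup E^3$) with the same induced decoration. The only difference is organizational: the paper argues directly that the two iterated quotients coincide, whereas you route both through an auxiliary triple composite $H$ and are more explicit about the chosen-representative convention, invoking a relating automorphism rather than asserting literal equality.
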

\begin{proof}
We wish to show that $(\mathfrak{g^1}\circ_v \mathfrak{g^2}) \circ_v
\mathfrak{g^3} = \mathfrak{g}^{1+2} \circ_v \mathfrak{g^3} =
\mathfrak{g^{(1+2) +3}}$ is equal to $\mathfrak{g^1}\circ_v
(\mathfrak{g^2} \circ_v \mathfrak{g^3}) = \mathfrak{g}^{1} \circ_v
\mathfrak{g^{2+3}} = \mathfrak{g^{1+(2 +3)}}$. It is clear that
$V^{(1+2) + 3} = V^{1+(2+3)}$ since disjoint union is associative. When
forming $E^{(1+2) + 3}$ we first identify $\tout(G^2)$ and $\tin(G^1)$ and
then identify $\tout(G^3)$ with $\tin(G^{1+2}) = \tin(G^2)$. This is the same
as first identifying $\tin(G^2)$ with $\tout(G^3)$ and then identifying
$\tout(G^{2+3}) = \tout(G^2)$ with $\tin(G^1),$ so $E^{(1+2)+3} =
E^{1+(2+3)}$. At this point we  see that $G^{(1+2)+3}$ and $G^{1+(2+3)}$
as defined above have the same source and target maps. 

The decorations $\mathfrak{g^{(1+2) +3}}$ and $\mathfrak{g^{1+(2 +3)}}$
are identical given the way they are induced from $\mathfrak{g^1}$,
$\mathfrak{g^2}$, and $\mathfrak{g^3}$. \end{proof}

The horizontal composition  $\mathfrak{g^1} \circ_h \mathfrak{g^2}
=\mathfrak{g^{1+2}}$ of two decorations $\mathfrak{g^1}$ and
$\mathfrak{g^2}$ is given by disjoint union of the underlying graphs
with decoration given by disjoint union. The only thing we must declare
is that $\tin(G^{1+2})$ is ordered so that $\tin(G^1) < \tin(G^2)$ and
$\tout(G^{1+2})$ is ordered so that $\tout(G^1) < \tout(G^2)$. This is clearly
associative since (ordered) disjoint union is associative.

The symmetric action is given by the action on (ordered) input and
output edges of the graph. If $\mathfrak{g}$ has input edges $\tin(G) \in
\M E$ then $\sigma^*\mathfrak{g} $ has exactly the same structure as
$\mathfrak{g}$ except that we give the input vertices of $G$ the order
$\tin(G)\cdot \sigma$. Similarly, $\tau _*\mathfrak{g}$ just has the
modified order $\tau \cdot \tout(G)$ on output vertices.

\begin{lem} The vertical composition is compatible with the symmetric
group actions in the sense that \begin{align} \mathfrak{g^1} \circ_v
(\sigma_* \mathfrak{g^2}) &= (\sigma^* \mathfrak{g^1}) \circ_v
\mathfrak{g^2} \label{E:inbetween} \\ \sigma^*(\mathfrak{g^1}\circ_v
\mathfrak{g^2}) &= \mathfrak{g^1} \circ_v (\sigma^* \mathfrak{g^2})
\label{E:axiomvertupper} \\ \tau_*(\mathfrak{g^1}\circ_v \mathfrak{g^2})
&= (\tau_* \mathfrak{g^1})\circ_v \mathfrak{g^2}.
\label{E:axiomvertlower} \end{align} \end{lem}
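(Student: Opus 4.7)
The plan is to split the three equations into the two that follow immediately from the definition and the one that contains the real content. Equations \eqref{E:axiomvertupper} and \eqref{E:axiomvertlower} are bookkeeping: by the construction of vertical composition, $\tin(G^{1+2})$ is defined to be $\tin(G^2)$ with its original order and $\tout(G^{1+2})$ is defined to be $\tout(G^1)$ with its original order, while the vertex decoration, interior edge orderings, and $D_0$ on interior edges are independent of these external orderings. Thus $\sigma^*$ acts on both sides of \eqref{E:axiomvertupper} only by right multiplication on $\tin(G^2)$, and likewise $\tau_*$ acts on both sides of \eqref{E:axiomvertlower} only by left multiplication on $\tout(G^1)$. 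Equality in each case is therefore immediate after unwinding the definitions.

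The substantive statement is \eqref{E:inbetween}, where the permutation acts on the edges that get glued. I would first fix the conventions that the $i$-th entry of $L\cdot \sigma$ is $L_{\sigma(i)}$ and the $i$-th entry of $\sigma \cdot L$ is $L_{\sigma^\inv(i)}$. Writing $\tin(G^1) = (e^1_1, \dots, e^1_n)$ and $\tout(G^2) = (e^2_1, \dots, e^2_n)$, the vertical composition identifies the $i$-th entry of $\tin(G^1)$ with the $i$-th entry of $\tout(G^2)$. On the left of \eqref{E:inbetween}, the order $\tout(G^2)$ is replaced by $\sigma \cdot \tout(G^2)$, producing the identification $e^1_i \sim e^2_{\sigma^\inv(i)}$; on the right, the order $\tin(G^1)$ is replaced by $\tin(G^1)\cdot \sigma$, producing the identification $e^1_{\sigma(i)} \sim e^2_i$, which after reindexing $j=\sigma(i)$ is the same relation $e^1_j \sim e^2_{\sigma^\inv(j)}$. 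Consequently both sides yield the same graph $H$, the same induced source/target maps on the glued edges, and the same $D_0$, $D_1$, and interior vertex orderings; neither operation alters $\tin(G^2)$ or $\tout(G^1)$, so the two decorations are literally equal before even passing to $\Gamma$.

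The main obstacle, such as it is, will be notational bookkeeping: keeping the left and right $\Sigma$-actions straight and making sure that the equivalence class of the edge $e^1_j$ in the quotient $E^{1+2}$ depends only on the edge paired with it, not on which factor carries the permutation. I would therefore spend a line or two fixing conventions at the start, then organize the verification of \eqref{E:inbetween} as a single diagram comparing the two edge identifications, with the decoration data read off directly from $D^1_0 \sqcup D^2_0$ and $D^1_1 \sqcup D^2_1$. Once the edge identifications have been shown to coincide, there is nothing left to check.
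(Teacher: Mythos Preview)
Your proposal is correct and matches the paper's own proof essentially line for line: the paper likewise disposes of \eqref{E:axiomvertupper} and \eqref{E:axiomvertlower} by observing that $\sigma^*$ and $\tau_*$ only modify the order on $\tin(G^2)$ and $\tout(G^1)$, and handles \eqref{E:inbetween} by writing out the two edge identifications $e_i^1 \sim e_{\sigma^{-1}(i)}^2$ and $e_i^2 \sim e_{\sigma(i)}^1$ and noting they coincide. Your version is slightly more explicit about the decoration data on the glued graph, but the argument is the same.
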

\begin{proof} If $\mathfrak{g}$ is a decoration of $G$, then we will
write $\tin(\mathfrak{g})$ for $\tin(G)$ and $\tout(\mathfrak{g})$ for
$\tout(G)$ since we will be dealing with so many orders in this proof.

For \eqref{E:inbetween} it is enough to check that the underlying graphs
are the same. But we form graph for the left-hand decoration by
identifying the ordered sets $\tin(\mathfrak{g^1}) = (e_1^1, \dots,
e_n^1)$ and \[ \tout(\mathfrak{\sigma_* \mathfrak{g^2}}) = \sigma\cdot
\tout(\mathfrak{\mathfrak{g^2}}) = (e_{\sigma^\inv(1)}^2, \dots,
e_{\sigma^\inv(n)}^2) \] and we form the graph for the right-hand
decoration by identifying the ordered sets
$\tout(\mathfrak{\mathfrak{g^2}}) = (e_1^2, \dots, e_n^2)$ and \[
\tin(\mathfrak{\sigma^* \mathfrak{g^1}}) = 
\tin(\mathfrak{\mathfrak{g^1}})\cdot \sigma = (e_{\sigma(1)}^1, \dots,
e_{\sigma(n)}^1). \] In the first case we are identifying $e_i^1 \sim
e_{\sigma^\inv(i)}^2$ while in the second we are identifying $e_i^2 \sim
e_{\sigma(i)}^1$; since this is the same identification we see that the underlying
graph of both compositions is the same.

The only part of a decoration that $\sigma^*$ changes is the order on
$\tin(G)$, so \[ \tin(\sigma^*(\mathfrak{g^1}\circ_v \mathfrak{g^2})) =
\tin(\mathfrak{g^1}\circ_v \mathfrak{g^2}) \cdot \sigma =
\tin(\mathfrak{g^2})\cdot \sigma = \tin(\sigma^* \mathfrak{g^2}) = in
(\mathfrak{g^1} \circ_v (\sigma^* \mathfrak{g^2})) \] gives
\eqref{E:axiomvertupper}. Equation \eqref{E:axiomvertlower} follows
similarly. \end{proof}

\begin{prop}
The definitions above make $F(\X)$ into a prop.
\end{prop}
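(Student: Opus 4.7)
The plan is to verify each of the prop axioms at the level of decorated graphs, relying on the equivalence relation generated by graph automorphisms to identify the outcomes of the two sides of each axiom. The lemma immediately preceding the statement already establishes \eqref{E:vertcompat}, and well-definedness of the vertical composition on $\Gamma$ was verified when the composition was defined; I would begin by noting that horizontal composition and the symmetric actions are plainly well-defined on equivalence classes, since graph automorphisms of the summands induce an automorphism of the disjoint union which permutes the ordered inputs/outputs in the same way.

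For the identity axiom \eqref{E:identities}, recall that $\id_c$ is the decoration of the one-edge, no-vertex graph. The horizontal composite $\id_{a_1} \circ_h \cdots \circ_h \id_{a_n}$ is a disjoint union of $n$ edges. When we vertically compose this with a decoration $\mathfrak{g}$ of some graph $G$ whose inputs carry the colors $a_1,\dots,a_n$, the edge identification replaces each input edge of $G$ with itself (after canonical isomorphism), so that the composite is visibly the same decorated graph; the other half is symmetric. The interchange rule \eqref{E:hvcompat} reduces to showing that taking disjoint union commutes with the edge-identification procedure used to define vertical composition: both sides of the equation produce the graph whose vertex set is $V^1 \sqcup V^2 \sqcup V^{1'} \sqcup V^{2'}$, with $\tout(G^2) \cup \tout(G^{2'})$ identified with $\tin(G^1) \cup \tin(G^{1'})$, and the vertex decorations and orderings on $\tin, \tout$ coincide coordinatewise, so the two decorated graphs are literally equal.

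The compatibilities \eqref{E:horizcompat} and swap formula \eqref{E:horizswap} for horizontal composition follow because the underlying graph of $\mathfrak{g^1} \circ_h \mathfrak{g^2}$ has $\tin(G^{1+2}) = \tin(G^1) \cdot \tin(G^2)$ as an ordered list (and similarly for outputs); a permutation $\sigma \times \bar\sigma$ acting on the concatenation acts as $\sigma$ on the first block and $\bar\sigma$ on the second, which is precisely the block permutation produced by separately acting on each piece before concatenating. The swap formula \eqref{E:horizswap} is the same observation applied to $\sigma_{p,n}$, which exchanges the two blocks of the concatenation, giving the reversed disjoint union $\mathfrak{g^2} \circ_h \mathfrak{g^1}$. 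Finally, the interchange $\sigma^* \tau_* = \tau_* \sigma^*$ is immediate because $\sigma^*$ only modifies the ordering on $\tin(G)$ and $\tau_*$ only modifies the ordering on $\tout(G)$, and the action relations $\sigma^* \bar\sigma^* = (\bar\sigma \sigma)^*$, $\tau_*\bar\tau_* = (\tau \bar\tau)_*$ follow from associativity of permutation composition on the ordered input/output lists.

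The main obstacle will be \eqref{E:hvcompat}: one has to be meticulous about identifying the underlying graphs on the two sides with the same chosen representative in the isomorphism class, and about checking that the edge identifications match after this choice. The other axioms reduce to direct combinatorial observations on ordered lists of edges, with the symmetric action axioms being essentially trivial. Since we are checking equalities in $\Gamma$ (classes modulo graph automorphism), any residual discrepancy between the two constructions in \eqref{E:hvcompat} can be absorbed by a canonical graph automorphism, and this last step is where care is required.
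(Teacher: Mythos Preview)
Your proposal is correct and follows essentially the same route as the paper's proof: verify each axiom directly by comparing the underlying decorated graphs on both sides, invoking the preceding lemma for \eqref{E:vertcompat} and noting that the symmetric-action axioms are trivial because $\sigma^*$ and $\tau_*$ modify disjoint pieces of the decoration. The paper treats \eqref{E:hvcompat} in the same way you outline (same vertex set $V^1\sqcup V^2\sqcup V^3\sqcup V^4$, same edge identifications, induced decorations agree) and dispatches \eqref{E:identities}, \eqref{E:horizcompat}, and \eqref{E:horizswap} even more tersely than you do, so your extra caution about chosen representatives is harmless but not strictly needed.
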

\begin{proof} It is immediate from construction of $\circ_v$ that
\eqref{E:identities} holds.

It remains to show that the interchange of the horizontal and vertical
compositions holds \[  (\mathfrak{g^1} \circ_v \mathfrak{g^2}) \circ_h
(\mathfrak{g^3} \circ_v \mathfrak{g^4}) = (\mathfrak{g^1}\circ_h
\mathfrak{g^3}) \circ_v (\mathfrak{g^2}\circ_h \mathfrak{g^4}). \] We
examine the underlying graphs on each side. They have the same vertex
set $V_1 \sqcup V_2 \sqcup V_3 \sqcup V_4$. The edge set on each side is
$E_1 \sqcup E_2 \sqcup E_3 \sqcup E_4$ with some of the edges
identified. On the left, we identify the ordered sets $\tin(G^1) \sim
\tout(G^2)$ as well as $\tin(G^3) \sim \tout(G^4)$. On the right we identify
$( \tin(G^1), \tin(G^3)) \sim (\tout(G^2), \tout(G^4))$. These identifications
are the same, and we see that both sides have the same underlying graph.
As the structure of the compositions is just induced from that on the
individual decorations, these decorations are the same as well.

We showed that the vertical composition is compatible with the symmetric
group actions in the previous lemma. The compatibility of horizontal
composition with symmetric group actions as in \eqref{E:horizcompat} and
\eqref{E:horizswap} is easy to see from the definition of $\circ_h$.
Finally, the interchange rule $\sigma^* \tau_* = \tau_* \sigma^*$ is
obvious since $\sigma^*$ only modifies the order on $\tin(G)$ and $\tau_*$
only modifies the order on $\tout(G)$.

\end{proof}

\begin{prop} The assignment $F: \Mega \to \Prop$ is a functor.
\end{prop}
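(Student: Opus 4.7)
The plan is to define $F(f) \colon F(\X) \to F(\Y)$ for a map of megagraphs $f \colon \X \to \Y$ and then verify that this assignment respects identities and composition of megagraph morphisms. On colors we set $F(f) = f_0 \colon X_0 \to Y_0$. On a representative decoration $\mathfrak{g} = (G, D_0, D_1, \text{orderings})$ of a graph $G$ by $\X$, we define $F(f)(\mathfrak{g})$ to be the decoration of the same chosen graph $G$ with $D_0' = f_0 \circ D_0 \colon E \to Y_0$, $D_1' = f_1 \circ D_1 \colon V \to Y_1$, and the same orderings on $\tin(v), \tout(v), \tin(G), \tout(G)$ as in $\mathfrak{g}$. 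The compatibility conditions $s \circ D_1' = \M D_0' \circ \tin$ and $t \circ D_1' = \M D_0' \circ \tout$ follow by pasting the two commuting squares expressing that $\mathfrak{g}$ is a decoration with the two squares expressing that $f$ is a map of megagraphs (in particular, $s \circ f_1 = \M f_0 \circ s$ and $t \circ f_1 = \M f_0 \circ t$).

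The first thing to check is that $F(f)$ is well-defined on equivalence classes $\Gamma$. Suppose an automorphism $\varphi \colon G \to G$ relates $\mathfrak{g}$ and $\widetilde{\mathfrak{g}}$. Since $F(f)$ leaves the underlying graph and all orderings untouched, conditions (1) and (2) in the definition of the relation carry over verbatim. For condition (3), if $D_1(v) = \sigma \cdot \widetilde{D}_1(\varphi_1(v)) \cdot \tau$, then applying $f_1$ (which is equivariant for both symmetric group actions, by definition of a megagraph morphism) gives $D_1'(v) = f_1 D_1(v) = \sigma \cdot f_1 \widetilde D_1(\varphi_1(v)) \cdot \tau = \sigma \cdot \widetilde{D}_1'(\varphi_1(v)) \cdot \tau$. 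In particular, $F(f)$ sends interior and exterior permutation relations to themselves, so it descends to a well-defined map $\Gamma(\X) \to \Gamma(\Y)$.

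Next we check that $F(f)$ is a prop homomorphism. Identities $\id_c$ are decorations of the edge graph (no vertices) with the single edge labeled by $c$; these are sent to $\id_{f_0(c)}$. Vertical composition $\mathfrak{g}^1 \circ_v \mathfrak{g}^2$ is built by gluing $\tin(G^1)$ with $\tout(G^2)$ using the agreement of their $\M D_0$-images; applying $f_0$ preserves this agreement, and the new decoration $D_1^{1+2}{}' = f_1 \circ (D_1^1 \sqcup D_1^2)$ agrees with forming $F(f)\mathfrak{g}^1 \circ_v F(f)\mathfrak{g}^2$. Horizontal composition is disjoint union of graphs with concatenated orderings, so it is manifestly preserved. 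The symmetric actions $\sigma^*, \tau_*$ only permute the exterior orderings $\tin(G)$ and $\tout(G)$, which $F(f)$ does not touch, so equivariance is immediate.

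Finally, functoriality. The identity megagraph map $\id_\X$ induces the identity on the set of decorations by construction, so $F(\id_\X) = \id_{F(\X)}$. For composable megagraph maps $\X \xrightarrow{f} \Y \xrightarrow{g} \Z$, we have $(g \circ f)_0 = g_0 \circ f_0$ and $(g \circ f)_1 = g_1 \circ f_1$ from the definition of $\Mega$, so applying $F(g \circ f)$ and $F(g) \circ F(f)$ to a decoration yields the same data. The main subtle step is the well-definedness on equivalence classes, which reduces entirely to the equivariance of $f_1$ for the two $\Sigma$-actions on $X_1$; everything else is a routine unwinding of the definitions of vertical composition, horizontal composition, and the symmetric actions given in the appendix.
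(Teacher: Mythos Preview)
Your proof is correct and follows the same approach as the paper: define $F(f)$ on a decoration by postcomposing $D_0$ with $f_0$ and $D_1$ with $f_1$, leaving the underlying graph and all orderings unchanged, and then observe that functoriality follows from associativity of composition of set maps. The paper's proof is a three-sentence sketch that records only this definition and the functoriality observation; you have supplied the details the paper omits, in particular the verification that $F(f)$ descends to equivalence classes in $\Gamma$ and that it respects the prop structure (identities, $\circ_v$, $\circ_h$, and the symmetric actions).

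One small remark: your well-definedness argument invokes $\Sigma$-equivariance of $f_1$, which you say holds ``by definition of a megagraph morphism.'' The paper's stated definition of a megagraph map only records the commuting squares for $s$ and $t$ and does not explicitly demand equivariance of $f_1$; this is almost certainly an omission in the paper (the forgetful functor $U$ and the adjunction only make sense with equivariance), so your use of it is legitimate, but be aware that you are relying on an implicit rather than explicit hypothesis.
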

\begin{proof} Suppose that $f=(f_1, f_0): \X \to \Y$ is a morphism of
megagraphs, where $f_i: X_i\to Y_i$; we wish to describe $F(f)$. Let $\mathfrak{g}$ be in
$F(\X)$, i.e. $\mathfrak{g}$ is a decoration of a graph $G$ by $\X$. We
then have a decoration of $G$ by $\Y$, which we denote $\mathfrak{g^f},$
with $D_0^f = f_0\circ D_0$ and $D_1^f = f_1\circ D_1$, and the order
structures are the same. We define $F(f)\mathfrak{g}:=\mathfrak{g^f}$. Since
composition of set functions is associative, $F(f\circ g) = F(f) \circ
F(g)$. \end{proof}

\subsection{The functors \texorpdfstring{$F$ and $U$}{F and U} are an adjoint pair}

We now turn to adjointness. We will show that \[ \Hom_{\Prop}(F(\X), \T)
= \Hom_{\Mega}(\X, U(\T)), \] where $U$ is the forgetful functor. As
mentioned above, we have inclusions \begin{align*} X_0 &\hookrightarrow
\Gamma \\ X_1 &\hookrightarrow \Gamma \end{align*} so we must 
show that given $f: \X \to U(\T)$ there \emph{exists} a \emph{unique}
map of props $K: F(\X) \to \T$ such that $K|_{X_0}=f_0$ and
$K|_{X_1}=f_1$. Existence and uniqueness will be shown at the same time
through an inductive process. Since elements of $\Gamma$ have an
underlying graph, we can define a filtration $\Gamma_0 \ci \Gamma_1 \ci
\dots$ based on the order of the underlying graph; we define $K$ as a
limit of partially defined functors $K_p: \Gamma_p \to \T$.  Since we must choose an order on inputs and outputs in order to define $\mathfrak{g}(x)$,  we fix this choice for all corollas $G$ in the process of defining $K_p$. In what follows, we will show that each $K_p$ can then be defined in exactly one way.

Here is what we require from $K_p: \Gamma_p \to \T$:
\begin{itemize}
	\item Compatibility with $f$:
		\begin{itemize}
			\item If $c\in X_0 = \col(F(\X))$, then $K_p (\id_c) = \id_{f_0(c)}$.
			\item If $x\in X_1$ then $K_p(\mathfrak{g}(x)) = f_1(x)$.
		\end{itemize}
	\item The $K_p$ constitute a filtration: $K_p|_{\Gamma_{p-1}} = K_{p-1}$.
	\item Partial functoriality:
		\begin{itemize}
			\item If $\mathfrak{g} \in \Gamma_p$ and $\mathfrak{g}=\mathfrak{g^1} \circ_h \mathfrak{g^2}$, then $K_p(\mathfrak{g}) = K_p(\mathfrak{g^1}) \circ_h K_p(\mathfrak{g^2})$.
			\item If $\mathfrak{g} \in \Gamma_p$ and $\mathfrak{g}=\mathfrak{g^1} \circ_v \mathfrak{g^2}$, then $K_p(\mathfrak{g}) = K_p(\mathfrak{g^1}) \circ_v K_p(\mathfrak{g^2})$.
			\item If $\mathfrak{g} \in \Gamma_p$ then $K_p(\sigma^*\tau_* \mathfrak{g}) = \sigma^*\tau_* K_p(\mathfrak{g})$.
		\end{itemize}
\end{itemize}

The graphs with zero vertices are just a collection of non-incident
edges; we define \[ K_0 (\sigma^* \tau_* (\id_{c_1} \circ_h \dots
\circ_h \id_{c_k})) = \sigma^*\tau_* (\id_{f_0(c_1)} \circ_h \dots \circ_h
\id_{f_0(c_k)}). \] We define $K_1|_{\Gamma_0} = K_0$, and
\begin{multline*} K_1 (\sigma^* \tau_* (\id_{c_1} \circ_h \dots \circ_h
\id_{c_k} \circ_h \mathfrak{g}(x) \circ_h \id_{c_{k+1}} \circ_h \dots
\circ_h \id_{c_\ell})) \\= \sigma^*\tau_* (\id_{f_0(c_1)} \circ_h \dots
\circ_h \id_{f_0(c_k)} \circ_h f_1(x) \circ_h \id_{f_0(c_{k+1})} \circ_h
\dots \circ_h \id_{f_0(c_\ell)}), \end{multline*} which covers all
decorations on order 1 graphs. Note that $K_0$ and $K_1$ are
well-defined and satisfy the above conditions.

We now build $K_p$ using $K_{p-1}$. For this, we need a suitable
collection of subgraphs. By a \emph{subgraph} of $G=(E,V)$ we mean a
pair of subsets $E^0 \ci E$ and $V^0 \ci V$. These determine a graph
with source and target maps induced by those from $G$, i.e. they are
defined by \[ s^0,t^0: E^0 \hookrightarrow E
\overset{s,t}\longrightarrow V_+ \twoheadrightarrow V^0_+ \]so that
$s^0(e) = s(e)$ and $t^0(e) = t(e)$ whenever possible (the second arrow
is given by $(V\setminus V^0) \mapsto *$). An \emph{admissible subgraph}
is a subgraph satisfying the condition that if $v\in V^0$ then any edge
incident to $v$ is in $E^0$.

Notice that if $G^0$ is an admissible subgraph of $G$, a decoration
$\mathfrak{g}$ on $G$ nearly induces a decoration on $G^0$. The only
thing that is missing is an order on the input and output edges.

\begin{defn} A \emph{decomposition} of a graph $G$ is a collection of
admissible subgraphs $G^1, \dots, G^n$ so that $V^i \cap V^j =
\emptyset$ when $i\neq j$, $V =  \bigcup V^i$, and $E= \bigcup E^i$. A
\emph{proper} decomposition is one in which each $V^i$ is nonempty.
\end{defn}

Notice that the intersection of two decompositions is again a
decomposition, where by intersection of $G^1, \dots, G^n$ and
$\bar{G}^1, \dots, \bar{G}^m$ we mean \[ G^1 \cap \bar{G}^1, G^1 \cap
\bar{G}^2, \dots, G^n \cap \bar{G}^{m-1}, G^n \cap \bar{G}^m. \]

We now isolate a particularly interesting type of decomposition. A
\emph{vertical decomposition} of $G$ is a decomposition $G^1, \dots,
G^n$ so that \begin{align*} \tout(G^1) &= \tout(G)\\ \tin(G^n)&=\tin(G) \\
\text{and } \tout(G^i) &= \tin(G^{i-1}) \text{ for } 2 \leq i \leq n.
\end{align*} If $\mathfrak{g}$ is a decoration on $G$, then a
\emph{vertical decomposition of $\mathfrak{g}$} is a vertical
decomposition of $G$ together with a choice of orders on $\tout(G^i)$ and $\tin(G^i)$ for
$i\in [1,n]$ so that the above equalities hold \emph{as ordered sets.}
The data of a vertical decomposition thus gives decorations
$\mathfrak{g^i}$ on $G^i$, and, moreover, $\mathfrak{g} = \mathfrak{g^1}
\circ_v \cdots \circ_v \mathfrak{g^n}$.

\begin{lem} Suppose that $G^0$ is an admissible subgraph of $G$ and
$G^1, G^2$ is a vertical decomposition of $G$. Define subgraphs $G^{01}$
and $G^{02}$ by \begin{align*} V^{01} &= V^0 \cap V^1 & V^{02} &= V^0
\cap V^2 \\ E^{01} &= (E^0 \cap E^1) \cup (\tout(G^0)) & E^{02} &= (E^0
\cap E^2) \cup (\tin(G^0)). \end{align*} Then $G^{01}, G^{02}$ is a
vertical decomposition of $G^0$. \end{lem}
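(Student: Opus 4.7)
The plan is to verify the three defining properties of a vertical decomposition of $G^0$ in turn: (a) that $G^{01}$ and $G^{02}$ are admissible subgraphs of $G^0$ whose vertex sets are disjoint and cover $V^0$, and whose edge sets together cover $E^0$; (b) the boundary identities $\tout(G^{01}) = \tout(G^0)$ and $\tin(G^{02}) = \tin(G^0)$; and (c) the gluing identity $\tout(G^{02}) = \tin(G^{01})$.

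For part (a), admissibility of $G^{01}$ in $G^0$ is immediate: if $v \in V^{01} = V^0 \cap V^1$, then any edge of $G$ incident to $v$ lies in both $E^0$ (by admissibility of $G^0$) and $E^1$ (by admissibility of $G^1$), hence in $E^0 \cap E^1 \subseteq E^{01}$; similarly for $G^{02}$. Disjointness $V^{01} \cap V^{02} = V^0 \cap (V^1 \cap V^2) = \emptyset$ and coverage $V^{01} \cup V^{02} = V^0 \cap (V^1 \cup V^2) = V^0$ are one-line set-theoretic computations. The edge-coverage $E^{01} \cup E^{02} = E^0$ is the same kind of distribution plus the observation that $\tout(G^0), \tin(G^0) \subseteq E^0$.

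Before tackling (b) and (c), I would establish a preliminary fact about how edges in the vertical decomposition $G^1, G^2$ can straddle vertex sets. Namely, for every $e \in E^1$ the target $t(e)$ lies in $V^1 \cup \{*\}$: if instead $t(e) \in V^2$ then in $G^1$ we would have $t^1(e) = *$, so $e \in \tout(G^1) = \tout(G)$, forcing $t(e) = *$ in $G$, a contradiction. Dually, for $e \in E^2$ the source $s(e)$ lies in $V^2 \cup \{*\}$. With this, (b) is a short double-inclusion argument: $\tout(G^0) \subseteq \tout(G^{01})$ is immediate because $\tout(G^0) \subseteq E^{01}$ and the target of any such edge lies outside $V^0 \supseteq V^{01}$; conversely, if $e \in \tout(G^{01})$ lies in $E^0 \cap E^1$ with $t(e) \notin V^{01}$, the preliminary fact rules out $t(e) \in V^0 \cap V^2$, hence $t(e) \notin V^0$ and $e \in \tout(G^0)$. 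The identity $\tin(G^{02}) = \tin(G^0)$ is proved symmetrically.

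The main obstacle is (c), the identity $\tout(G^{02}) = \tin(G^{01})$, because here one must genuinely use that $\tout(G^2) = \tin(G^1)$ and keep track of which of the ``bridging'' edges get counted in each side. I would proceed by double inclusion, splitting each side into the two pieces of its defining union. For an edge $e \in E^{01}$ with $s(e) \notin V^{01}$ coming from $E^0 \cap E^1$, the preliminary fact on $s(e)$ for $E^2$-edges, together with $s(e) \in V^{02}$ or $s(e) \notin V^0$, forces $e \in E^0 \cap E^2$ (either directly, or via $e \in \tin(G^1) = \tout(G^2) \subseteq E^2$) and $t(e) \notin V^{02}$; and conversely. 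The remaining subtlety is that the adjoined edges $\tout(G^0) \subseteq E^{01}$ and $\tin(G^0) \subseteq E^{02}$ are exactly what is needed so that edges of $G^0$ which run from $V^{02}$ through the ``$G^1$-level'' straight out of $G^0$, or dually, are not lost from either side; confirming this compatibility is the technical heart of the argument. Note that since the lemma concerns a decomposition at the level of \emph{graphs} and not decorations, no ordering of edges needs to be chosen here.
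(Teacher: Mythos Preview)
Your plan is correct and matches the paper's proof in structure: both verify admissibility and decomposition properties first, then the three vertical identities, with the key tool being the observation that edges of $E^1$ cannot have target in $V^2$ (and dually for $E^2$ and sources). The only organizational difference is that you isolate this ``preliminary fact'' up front whereas the paper introduces it mid-argument, and the paper proves only $\tout(G^{02}) \subseteq \tin(G^{01})$ explicitly (via an auxiliary computation of $\tout(G^{02})$ as a three-term union) and appeals to duality for the rest, while you sketch the full double inclusion by direct case analysis; these are cosmetic rather than substantive differences.
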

\begin{proof} To show that $G^{0i}$ is admissible, suppose that $v\in
V^{0i} = V^0 \cap V^i$. If $e$ is incident to $v$ in $G^0$, then $e$ is
incident to $v$ in $G$, so by admissibility of $G^i$ and $G^0$ we have
$v\in E^0 \cap E^i \ci E^{0i}$.

We now show that $G^{01}$, $G^{02}$ constitutes a decomposition of
$G^0$. A vertex $v\in V^0$ must either be in $V^1$ or $V^2$ since $G^1$,
$G^2$ is a decomposition of $G$, so $v\in V^0\cap V^1 = V^{01}$ or
$V^0\cap V^2=V^{02}$. The same argument holds for edges. We also need to
check disjointness of $V^{01}$ and $V^{02}$, but $V^{01} \cap V^{02} =
V^0 \cap V^1 \cap V^2 = V^0 \cap \emptyset$.

Showing that this is a \emph{vertical} decomposition is a little bit
more work. Remember that we are trying to show that $\tout(G^{01}) =
\tout(G^0)$, and $\tin(G^{01}) = \tout(G^{02})$, and $\tin(G^{02}) = \tin(G^0)$. We
will just show that $\tout(G^{01}) = \tout(G^0)$, and $\tout(G^{02}) \ci
\tin(G^{01})$ since the proofs of the other equality other inclusion are
dual.

The inclusion $\tout(G^{0}) \ci \tout(G^{01})$ is part of the definition of
$G^{01}$. In order for $e\in \tout(G^{01})$, we must have $t^0(e) = *$ or
$t^1(e)=*$. If $t^1(e) = *$, then $e\in \tout(G^1) = \tout(G)$, so $t(e) =
*$. Thus $t^0(e) = *$ since $G^0$ is a subgraph of $G$. So $e\in
\tout(G^{01})$ implies that $t^0(e) = *$, so $e\in \tout(G^0)$.

We now wish to show that $\tout(G^{02}) \ci \tin(G^{01})$. If $e\in
\tout(G^{02})$, then $t^{02}(e)= *$ which means either $t^0(e) = *$ or
$t^2(e)=*$. Let us first check that such an $e$ is actually in $E^{01}$.
If $t^0(e) = *$, then $e\in \tout(G^0) \ci E^{01}$. If $t^2(e) = *$, then
$e\in  \tout(G^2) = \tin(G^1) \ci E^1$. Therefore $t^2(e) = *$ implies
$e\in E^0 \cap E^1 \ci E^{01}$.

Now that we know that $\tout(G^{02}) \ci E^{01}$, we must show that
$s^{01}(e) = *$ for $e\in \tout(G^{02})$. Towards this end, we examine
\begin{align*}
\tout(G^{02}) &= [E^0 \cap \tout(G^2)] \cup [\tin(G^0) \cap \tout(G^0)] \cup [E^2 \cap \tout(G^{0})] \\
&= [E^0 \cap \tin(G^1)] \cup [\tin(G^0) \cap \tout(G^0)] \cup [E^2 \cap \tout(G^{0})],
\end{align*}
which we now prove by showing the top equality; the bottom follows since
$G^1, G^2$ is a vertical decomposition. The inclusion from right to left
is easy, since any element $e$ in the right hand set has $t^0(e) = *$ or
$t^2(e) = *$, so $t^{02}(e) = *$. If $e\in \tout(G^{02})$ and $t^2(e)=*$
then $e\in \tout(G^2)$ so we are in the first set on the right. If $t^0(e)
= *$ then $e\in \tout(G^0)$. Since $e\in E^{02}$ we either have $e\in E^2$
or $e\in \tin(G^0)$, so $e$ is contained in one of the two rightmost sets.
Thus the left hand side is contained in the right hand side, and we have
shown equality.

It is clear that $E^0 \cap \tin(G^1)$ and $\tin(G^0) \cap \tout(G^0)$ are
contained in $\tin(G^{01})$. Our remaining work then is to show that $E^2
\cap \tout(G^0) \ci \tin(G^{01})$. 

We first make an observation. If $e\in E^2$ then $s(e) \notin V^1$. If
it were, then $s^2(e) = *$ by disjointness of $V^1$ and $V^2$, so $s(e)
= *$ since $\tin(G^2) = \tin(G)$. But we cannot have both $s(e) = *$ and
$s(e) \in V^1$.

Now consider $e\in E^2 \cap \tout(G^0)$. We either have $s(e) \in V^2$ or
$s(e) = *$. In both cases we have $s^1(e) = *$, so $e\in \tin(G^{01})$.
Thus we have shown $\tout(G^{02}) \ci \tin(G^{01})$, which completes the
proof.
\end{proof}

Let us now consider two vertical decompositions $G^1, G^2$ and $G^3,
G^4$ of the same graph $G$. Using the same notation from the previous
proposition, we have decompositions $G^{13}, G^{14}$ and $G^{23},
G^{24}$ of $G^1$ and $G^2$ respectively, and  decompositions $G^{31},
G^{32}$ and $G^{41}, G^{42}$ of $G^3$ and $G^4$. Notice that $V^{ij} =
V^{ji}$ for $i=1,2$ and $j=3,4$. 

As for the edge sets, we have
\begin{align*}
E^{13} &= (E^1 \cap E^3) \cup \tout(G^1) \\ 
	&= (E^1 \cap E^3) \cup \tout(G) \\
	&= (E^3 \cap E^1) \cup \tout(G^3) 
	= E^{31}
\end{align*}
and similarly $E^{24} = E^{42}$.
Furthermore, 
\begin{equation}\label{eq:edgesequal}
E^{14} \cup E^{23} = (E^1 \cap E^4) \cup (E^2 \cap E^3) = E^{41} \cup E^{32};
\end{equation}
to see this, it is enough to check that $E^{14}, E^{23}, E^{41},$ and $E^{32}$ are subsets of $(E^1 \cap E^4) \cup (E^2 \cap E^3)$.
To show, for example, that $(E^1\cap E^4) \cup \tin(G^1) =  E^{14} \subset (E^1 \cap E^4) \cup (E^2 \cap E^3)$, take an edge $e\in \tin(G^1) = \tout(G^2) \subset E^1 \cap E^2$. Since $e$ is in $G$ and $G^3, G^4$ is a decomposition, either $e\in E^3$ or $e\in E^4$, which implies that 
\[ e\in (E^1 \cap E^2) \cap E^3 \subset E^2 \cap E^3 \text{ or } e\in (E^1 \cap E^2) \cap E^4 \subset (E^1 \cap E^4), \]
hence $e\in (E^1 \cap E^4) \cup (E^2 \cap E^3)$. The proofs that $E^{23}, E^{41}, E^{32} \subset (E^1 \cap E^4) \cup (E^2 \cap E^3)$ are similar, so \eqref{eq:edgesequal} holds and 
we thus have identical vertical compositions
\begin{equation}\label{newdecompositions}
\begin{gathered}
G^{13}, (G^{14} \cup G^{23}), G^{24} \\
G^{31}, (G^{32} \cup G^{41}), G^{42}.
\end{gathered}
\end{equation}
Note that there are no edges of $G$ between vertices in $V^{14}$ and $V^{23}$, hence, if $G$ is \emph{connected} then $G^{14} \cup G^{23}$ has strictly fewer vertices than $G$. 

\begin{prop}\label{P:verticalsplitting} Suppose that $G$ is a connected
graph of order $p$, $\mathfrak{g}$ is a decoration on $G$, and $K_{p-1}$ is defined
and satisfies the required properties. If $\mathfrak{g^1} \circ_v
\mathfrak{g^2}$ and $\mathfrak{g^3} \circ_v \mathfrak{g^4}$ are two
proper vertical decompositions of $\mathfrak{g}$, then \[
K_{p-1}(\mathfrak{g^1}) \circ_v K_{p-1}(\mathfrak{g^2}) = K_{p-1} (
\mathfrak{g^3}) \circ_v K_{p-1}(\mathfrak{g^4}). \] \end{prop}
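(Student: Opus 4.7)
The plan is to refine both vertical decompositions via the preceding lemma into a common four-piece refinement of $\mathfrak{g}$, and then apply partial functoriality of $K_{p-1}$ to reduce both sides of the claimed identity to the same composite in $\T$.

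First, applying the preceding lemma to $(G^1, G^2)$ refined by $(G^3, G^4)$ and vice versa produces
\begin{align*}
\mathfrak{g^1} &= \mathfrak{g^{13}} \circ_v \mathfrak{g^{14}}, & \mathfrak{g^2} &= \mathfrak{g^{23}} \circ_v \mathfrak{g^{24}}, \\
\mathfrak{g^3} &= \mathfrak{g^{31}} \circ_v \mathfrak{g^{32}}, & \mathfrak{g^4} &= \mathfrak{g^{41}} \circ_v \mathfrak{g^{42}}.
\end{align*}
With compatible choices of interface orderings (which are part of the data of a vertical decomposition of a decoration), a direct comparison of vertex sets, edges, and decoration functions yields $\mathfrak{g^{13}} = \mathfrak{g^{31}}$ and $\mathfrak{g^{24}} = \mathfrak{g^{42}}$ as decorations. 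Properness of both decompositions together with the connectedness of $G$ forces each piece $\mathfrak{g^{ij}}$, and also the middle composites $\mathfrak{g^{14}} \circ_v \mathfrak{g^{23}}$ and $\mathfrak{g^{32}} \circ_v \mathfrak{g^{41}}$, to have strictly fewer than $p$ vertices, and thus to lie in $\Gamma_{p-1}$.

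Second, applying partial functoriality of $K_{p-1}$ twice together with associativity of $\circ_v$ in $\T$, I obtain
\[ K_{p-1}(\mathfrak{g^1}) \circ_v K_{p-1}(\mathfrak{g^2}) = K_{p-1}(\mathfrak{g^{13}}) \circ_v K_{p-1}(\mathfrak{g^{14}} \circ_v \mathfrak{g^{23}}) \circ_v K_{p-1}(\mathfrak{g^{24}}), \]
and the symmetric identity for the other side. Combined with $\mathfrak{g^{13}} = \mathfrak{g^{31}}$ and $\mathfrak{g^{24}} = \mathfrak{g^{42}}$, the proof reduces to showing the middle-piece equality $\mathfrak{g^{14}} \circ_v \mathfrak{g^{23}} = \mathfrak{g^{32}} \circ_v \mathfrak{g^{41}}$ in $\Gamma_{p-1}$. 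For this, both composites have vertex set $V^{14} \cup V^{23} = V^{41} \cup V^{32}$ and the same decoration function inherited from $\mathfrak{g}$. A case analysis on the interface edges $\tin(G^1)$ (resp.\ $\tin(G^3)$) contributed to $\mathfrak{g^{14}} \circ_v \mathfrak{g^{23}}$ (resp.\ $\mathfrak{g^{32}} \circ_v \mathfrak{g^{41}}$), using the absence of $G$-edges between $V^{14}$ and $V^{23}$, shows that every such edge already lies in $(E^1 \cap E^4) \cup (E^2 \cap E^3)$. Consequently both composites have the same underlying edge set, and with the ordering choices fixed in step one the two decorations coincide.

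The main obstacle is the edge analysis at the heart of the last step: verifying that the apparently different interface contributions $\tin(G^1)$ and $\tin(G^3)$ in fact contribute the same ``pass-through'' edges (those from $V^{24}$ to $V^{13}$), and bookkeeping the interface orderings so that the resulting middle decorations are literally equal, rather than merely related by an interior permutation that would require an additional appeal to the equivalence relation on $\widetilde{\Gamma}$.
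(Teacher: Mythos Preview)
Your argument is correct and follows essentially the same route as the paper: refine both two-piece vertical decompositions into the common four-piece decomposition $\mathfrak{g^{13}}, \mathfrak{g^{14}}, \mathfrak{g^{23}}, \mathfrak{g^{24}}$ (with $\mathfrak{g^{13}}=\mathfrak{g^{31}}$ and $\mathfrak{g^{24}}=\mathfrak{g^{42}}$), observe that connectedness plus properness forces the middle piece $\mathfrak{g^\dagger}=\mathfrak{g^{14}}\circ_v\mathfrak{g^{23}}=\mathfrak{g^{32}}\circ_v\mathfrak{g^{41}}$ to lie in $\Gamma_{p-1}$, and then use partial functoriality of $K_{p-1}$. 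The edge analysis you flag as the main obstacle is exactly the computation the paper carries out just \emph{before} stating the proposition (the equality $E^{14}\cup E^{23}=(E^1\cap E^4)\cup(E^2\cap E^3)=E^{41}\cup E^{32}$), so your in-proof sketch simply reproduces that preparatory work.
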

\begin{defn} If $G$ is connected, we define  \[ K_p(\mathfrak{g}) :=
K_{p-1}(\mathfrak{g^1}) \circ_v K_{p-1}(\mathfrak{g^2})\] for any proper
vertical decomposition $\mathfrak{g^1}, \mathfrak{g^2}$ of
$\mathfrak{g}$. \end{defn}

\begin{rem}
In the setting of the previous definition, we have
\begin{multline*} K_p(\sigma^* \tau_* \mathfrak{g})
= K_{p-1}(\sigma^* \mathfrak{g^1}) \circ_v K_{p-1}(\tau_*
\mathfrak{g^2})  = \sigma^* K_{p-1}( \mathfrak{g^1}) \circ_v \tau_*
K_{p-1}( \mathfrak{g^2}) \\= \sigma^* \tau_* \left( K_{p-1}(
\mathfrak{g^1}) \circ_v  K_{p-1}( \mathfrak{g^2})\right) = \sigma^*
\tau_* K_p (\mathfrak{g}),\end{multline*} 
so we see that compatibility
with symmetric group actions follows from the same property on $K_{p-1}$.
\end{rem}

\begin{proof}[Proof of Proposition~\ref{P:verticalsplitting}]
We will write $G^\dagger$ for $G^{14} \cup G^{23}$. 
Choose an order on
$\tin(G^{13})$ and on $\tout(G^{42})$, and use the appropriate orders on
everywhere else:
\begin{align*}
\tout(G^{13}) &= \tout(G) & \tin(G^{42}) &= \tin(G) \\
\tin(G^{14}) &= \tin(G^1) & = \tout(G^{23}) &= \tout(G^2) \\
\tin(G^{32}) &= \tin(G^3) & = \tout(G^{41}) &= \tout(G^4) \\
\tin(G^\dagger) &= \tout(G^{42}) & \tout(G^\dagger) &= \tin(G^{13}).
\end{align*}
Using \eqref{newdecompositions}, we now have an additional vertical decomposition 
\[ \mathfrak{g} =
\mathfrak{g^{13}} \circ_v \mathfrak{g^\dagger} \circ_v \mathfrak{g^{24}}
\] 
of $\mathfrak{g}$, as well decompositions
\begin{align*}
\mathfrak{g^1} &= \mathfrak{g^{13}} \circ_v \mathfrak{g^{14}} &
\mathfrak{g^2} &= \mathfrak{g^{23}} \circ_v \mathfrak{g^{24}} \\
\mathfrak{g^3} &= \mathfrak{g^{31}} \circ_v \mathfrak{g^{32}} &
\mathfrak{g^4} &= \mathfrak{g^{41}} \circ_v \mathfrak{g^{42}} \\
\mathfrak{g^\dagger} &= \mathfrak{g^{14}} \circ_v \mathfrak{g^{23}} 
= \mathfrak{g^{32}} \circ_v \mathfrak{g^{41}}.
\end{align*}
The decorations $\mathfrak{g^\dagger}$ and each $\mathfrak{g^{i}}$ have fewer than $p$ vertices using the fact that $G$ is connected and properness.
From now on we will only write down decorations with fewer than $p$ vertices, so we can safely write $K$ instead of $K_{p-1}$.

\begin{align*}
K(\mathfrak{g^{13}}) \circ_v K(\mathfrak{g^\dagger}) 
&= K(\mathfrak{g^{13}}) \circ_v K(\mathfrak{g^{14}})\circ_v K(\mathfrak{g^{23}}) \\
&= K(\mathfrak{g^{1}})\circ_v K(\mathfrak{g^{23}}) \\
K(\mathfrak{g^{31}}) \circ_v K(\mathfrak{g^\dagger}) 
&= K(\mathfrak{g^{31}}) \circ_v K(\mathfrak{g^{32}})\circ_v K(\mathfrak{g^{41}}) \\
&= K(\mathfrak{g^{3}})\circ_v K(\mathfrak{g^{41}}) \\
\end{align*}
These are equal, so we compose with $K(\mathfrak{g^{24}})$ and find
\begin{gather*}
K(\mathfrak{g^1}) \circ_v K(\mathfrak{g^{23}}) \circ_v K(\mathfrak{g^{24}}) = 
K(\mathfrak{g^1}) \circ_v K(\mathfrak{g^2})
\\
K(\mathfrak{g^{3}})\circ_v K(\mathfrak{g^{41}}) \circ_v K(\mathfrak{g^{42}}) =
K(\mathfrak{g^3}) \circ_v K(\mathfrak{g^4})
\end{gather*}
are equal as well.
\end{proof}

We now move on to arbitrary decompositions. At the moment, we have only
defined $K_p$ for \emph{connected} decompositions on $p$ vertices and for
arbitrary decompositions on \emph{fewer} than $p$ vertices.

Let $\mathfrak{g^1}, \dots, \mathfrak{g^k}$ be decorations whose
underlying graphs are connected, so that $\sum |V^i| = p$. We define \[
K_p(\mathfrak{g^1} \circ_h \cdots \circ_h \mathfrak{g^k} ) =
K_p(\mathfrak{g^1}) \circ_h \cdots \circ_h K_p(\mathfrak{g^k} ), \]
which is well-defined since $K_p$ is well-defined whenever the
underlying graph is connected. In this same situation we assign \[
K_p(\sigma^* \tau_* ( \mathfrak{g^1} \circ_h \cdots \circ_h
\mathfrak{g^k} )) = \sigma^* \tau_* K_p(\mathfrak{g^1} \circ_h \cdots
\circ_h \mathfrak{g^k} ).\] Let us see that this is well-defined.
Suppose  that \[ \sigma^* \tau_* ( \mathfrak{g^1} \circ_h \cdots \circ_h
\mathfrak{g^k} ) = \bar\sigma^* \bar\tau_* ( \mathfrak{\bar g^1} \circ_h
\cdots \circ_h \mathfrak{\bar g^k} ), \] and move the symmetric group
actions to the other side. We then see
\begin{align*}
\mathfrak{g^1} \circ_h \cdots \circ_h \mathfrak{g^k} &= (\bar\sigma \sigma^\inv)^* (\tau^\inv \bar\tau)_*( \mathfrak{\bar g^1} \circ_h \cdots \circ_h \mathfrak{\bar g^k} ) \\
&= \sigma_1^* \tau^1_* \mathfrak{\bar g^{a_1}} \circ_h \cdots \circ_h \sigma_k^* \tau^k_* \mathfrak{\bar g^{a_k}},
\end{align*}
whence
\begin{align*} K_p(\mathfrak{g^1} \circ_h \cdots \circ_h \mathfrak{g^k} ) 
&= K_p( \sigma_1^* \tau^1_* \mathfrak{\bar g^{a_1}} \circ_h \cdots \circ_h \sigma_k^* \tau^k_* \mathfrak{\bar g^{a_k}} ) \\
&= K_p( \sigma_1^* \tau^1_* \mathfrak{\bar g^{a_1}}) \circ_h \cdots \circ_h K_p(\sigma_k^* \tau^k_* \mathfrak{\bar g^{a_k}} ) \\
&= \sigma_1^* \tau^1_*  K_p( \mathfrak{\bar g^{a_1}}) \circ_h \cdots \circ_h \sigma_k^* \tau^k_*K_p( \mathfrak{\bar g^{a_k}} ) \\
&= (\bar\sigma \sigma^\inv)^* (\tau^\inv \bar\tau)_*( K_p( \mathfrak{\bar g^1}) \circ_h \cdots \circ_h K_p(\mathfrak{\bar g^k})).
\end{align*}
Applying $\sigma^*\tau_*$ to both sides gives 
\[ \sigma^* \tau_* K_p (\mathfrak{g^1} \circ_h \cdots \circ_h \mathfrak{g^k} ) = \bar\sigma^* \bar\tau_* K_p( \mathfrak{\bar g^1} \circ_h \cdots \circ_h \mathfrak{\bar g^k} ).\]

\begin{proof}[Summary of Proof of Theorem~\ref{T:freeprop}]
We wished to show \[ \Hom_{\Prop}(F(\X), \T) = \Hom_{\Mega}(\X, U(\T)),
\] which amounted to showing that $f:\X \to U(\T)$ induces a unique prop
map $K: F(\X) \to \T$ with $K|_{X_0} = f_0$ and $K|_{X_1} = f_1$. We
filtered the set of all morphisms, $\Gamma$, by the number of vertices
of the underlying graph of the decoration. We then built the prop map
$K$ inductively, with partially defined prop maps $K_p: \Gamma_p \to
\T$.

Since $K_p$ needed to agree with $K_{p-1}$ whenever possible, we could
\emph{define} $K_p(\mathfrak{g})$ to be $K_{p-1}(\mathfrak{g^1}) \circ_v
K_{p-1}(\mathfrak{g^2})$ or $K_{p-1}(\mathfrak{g^1}) \circ_h
K_{p-1}(\mathfrak{g^2})$ wherever this was possible and made sense. This
entire section was devoted to showing that such an assignment was
well-defined. The properties essentially came for free. \end{proof}

\bibliographystyle{amsplain} \bibliography{infinity}

\end{document}